\title{The complex hyperbolic form as\\ a Weil-Petersson form}
\author{Xiangsheng Wang}
\institute{School of Mathematics, Shandong University\hspace{3.5em} \email{xiangsheng@sdu.edu.cn}}
\date{\today}
\DeclareSymbolFont{sfoperators}{OT1}{cmss}{m}{n}
\DeclareSymbolFontAlphabet{\mathsf}{sfoperators}
\renewcommand{\operator@font}{\mathgroup\symsfoperators}
\numberwithin{equation}{section}
\swapnumbers                            \newtheoremstyle{plain}
{2ex plus 1ex minus .2ex}   {\medskipamount}   {\slshape}  {}       {\indent\bfseries\tlfstyle} {.}         {5pt plus 1pt minus 1pt} {}          
\newtheorem{theorem}[subsubsection]{Theorem}
\newtheorem{lemma}[subsubsection]{Lemma}
\newtheorem{corollary}[subsubsection]{Corollary}
\newtheorem{proposition}[subsubsection]{Proposition}
\newtheorem*{claim*}{Claim}
\newtheorem*{lemma*}{Lemma}
\newtheoremstyle{definition}
{2ex plus 1ex minus .2ex}   {\medskipamount}   {}  {}       {\indent\bfseries\tlfstyle} {.}         {5pt plus 1pt minus 1pt} {}          \theoremstyle{definition}
\newtheorem{remark}[subsubsection]{Remark}
\newtheorem{example}[subsubsection]{Example}
\newtheorem*{remark*}{Remark}
\newtheorem*{assumption*}{Assumption}
\newtheorem*{example*}{Example}
\let\newtitle\@title
\let\newauthor\@author
\let\newdate\@date
\titleformat{\section}{\normalfont\large\bfseries\tlfstyle}{\thesection}{0.6em}{}
\titleformat{\subsection}[runin]{\normalfont\normalsize\bfseries\tlfstyle}{\thesubsection}{0.4em}{}
\titleformat{\subsubsection}[runin]{\normalfont\normalsize\bfseries\tlfstyle}{\indent\thesubsubsection}{0.4em}{}
\newcommand*{\transpose}{{\mathpalette\@transpose{}}}
\newcommand*{\@transpose}[2]{\raisebox{\depth}{$\m@th#1\intercal$}}
\newcommand{\Hom}[2]{\operatorname{Hom}(#1,#2)}
\newcommand{\aset}[1]{\{#1\}}
\newcommand{\simarrow}{\xrightarrow{
    \smash{\raisebox{-0.65ex}{\ensuremath{\scriptstyle\sim}}}}}
\newcommand{\cA}{\mathcal{A}}
\newcommand{\cE}{\mathcal{E}}
\newcommand{\cO}{\mathcal{O}}
\newcommand{\cM}{\mathcal{M}}
\newcommand{\cS}{\mathcal{S}}
\newcommand{\cT}{\mathcal{T}}
\newcommand{\cW}{\mathcal{W}}
\newcommand{\sC}{\mathscr{C}}
\newcommand{\sM}{\mathscr{M}}
\newcommand{\rB}{\mathrm{B}}
\newcommand{\rT}{\mathrm{T}}
\newcommand{\rL}{\mathrm{L}}
\newcommand{\rH}{\mathrm{H}}
\newcommand{\rP}{\mathrm{P}}
\newcommand{\rR}{\mathrm{R}}
\newcommand{\bZ}{\mathbb{Z}}
\newcommand{\bC}{\mathbb{C}}
\newcommand{\bN}{\mathbb{N}}
\newcommand{\bR}{\mathbb{R}}
\newcommand{\bP}{\mathbb{P}}
\newcommand{\bfu}{\mathbf{u}}
\newcommand{\bfw}{\mathbf{w}}
\newcommand{\norm}[1]{\left\Vert#1\right\Vert}
\newcommand{\kah}{K\"ahler}
\newcommand{\tchm}{Teichm\"uller}
\newcommand{\tnn}{\mathcal{T}_{0,n}}
\newcommand{\cpo}{\mathbb{CP}^1}
\newcommand{\hbC}{\hat{\mathbb{C}}}
\newcommand{\zb}{\bar{z}}
\newcommand{\wb}{\bar{w}}
\newcommand{\dzg}{\mathrm{Diff}^0_{0,n}}
\newcommand{\pdg}{\mathrm{PDiff}^+_{0,n}}
\newcommand{\pmg}{\mathrm{PMod}_{0,n}}
\newcommand{\tpi}{2\pi i}
\newcommand{\hoz}{\mathrm{H}^{1,0}}
\newcommand{\hzo}{\mathrm{H}^{0,1}}
\newcommand{\tv}{\mathrm{TV}}
\newcommand{\pw}{\mathrm{WP}}
\newcommand{\cy}{\mathrm{CY}}
\newcommand{\Xst}{X_{\mathrm{st}}}
\newcommand{\loc}{\mathrm{loc}}
\newcommand{\dzs}{\lvert \diff z \rvert^2}
\newcommand{\dxs}{\lvert \diff \xi \rvert^2}
\DeclareMathOperator{\diff}{d}
\DeclareMathOperator{\supp}{supp}
\DeclareMathOperator{\opi}{i}
\DeclareMathOperator{\opj}{j}
\DeclareMathOperator{\opc}{c}
\DeclareMathOperator{\id}{id}
\DeclareMathOperator{\opP}{P}
\DeclareMathOperator{\opT}{T}
\DeclareMathOperator{\Vol}{Vol}
\DeclareMathOperator*{\esup}{ess\,sup}
\begin{document}
\maketitle

\begin{abstract}
  For the moduli space of the punctured spheres, we find a new equality between two symplectic forms defined on it.
  Namely, by treating the elements of this moduli space as the singular Euclidean metrics on a sphere, we give an interpretation of the complex hyperbolic form, i.e.\ the K\"ahler form of the complex hyperbolic structure on the moduli space, as a kind of Weil-Petersson form.
\end{abstract}

\section{Introduction}
\label{sec:introduction}

\subsection{}
A rather interesting phenomenon about various moduli spaces is that many of them carry natural symplectic forms.
Considering this fact, if two moduli spaces are correlated in some ways, we can expect that it may be possible to refine this relation to the symplectic geometry level.
Among the instances of such a phenomenon, we are particularly inspired by the following result due to Shimura~\cite{Shimura_1959le} and Goldman~\cite{Goldman_1984aa}.
\begin{theorem}[{\cite[Proposition~2.5]{Goldman_1984aa}}]
  \label{thm:s-g}
  Let $S$ be a close surface with genus greater than $1$ and let $\cT(S)$ be the \tchm\ space of $S$.
  Denote the Weil-Petersson symplectic form on $\cT(S)$ by $\omega_{\pw}$.
  By identifying $\cT(S)$ as an open subset of $\Hom{\pi_1 (S)}{G}/G$, $G = \mathrm{SL}(2,\bR)$, $\cT(S)$ carries another symplectic form, denoted by $\omega_{\mathrm{SG}}$.
  Then $\omega_{\pw}$ and $\omega_{\mathrm{SG}}$ coincide up to a constant.
\end{theorem}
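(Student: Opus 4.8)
The plan is to compute both forms on a common model of the tangent space $T_{[\rho]}\cT(S)$ and match them pointwise. Fix a point of $\cT(S)$, presented simultaneously as a marked Riemann surface $X$ and as the conjugacy class of a Fuchsian representation $\rho\colon\pi_1(S)\to G$ uniformizing $X$, so that $X = \mathbb{H}/\rho(\pi_1 S)$ carries its hyperbolic metric. On the Goldman side the tangent space is the twisted cohomology $H^1(\pi_1 S,\kg_{\mathrm{Ad}\,\rho})$, and by the twisted de Rham theorem this is computed by $\kg$-valued $1$-forms on $X$ that are closed for the flat connection $\diff_\rho$; the Shimura-Goldman form is
\begin{equation*}
  \omega_{\mathrm{SG}}([\alpha],[\beta]) = \int_X B(\alpha\wedge\beta),
\end{equation*}
where $B$ is an invariant bilinear form on $\kg=\ksl(2,\bR)$ (a fixed multiple of the trace form $(\xi,\eta)\mapsto\Tr{\xi\eta}$) and $\alpha\wedge\beta$ combines the wedge of forms with $B$ on coefficients. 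On the Weil-Petersson side the tangent space is the space of harmonic Beltrami differentials $\mu$, and $\omega_{\pw}$ is the imaginary part of the Hermitian pairing $\langle\mu,\nu\rangle = \int_X \mu\,\overline{\nu}\,\diff A$, with $\diff A$ the hyperbolic area form.

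The bridge between the two pictures is the Cartan decomposition $\kg = \kk\oplus\kp$ determined by the hyperbolic metric and carried around equivariantly. The first step is to select, inside each cohomology class, the harmonic representative for the twisted de Rham complex; Hodge theory for flat $\kg$-bundles over the compact surface $X$ furnishes a unique such representative. I would then split this harmonic $1$-form into its $\kp$- and $\kk$-valued parts. The $\kp$-part, the symmetric trace-free piece, is exactly the datum of a harmonic Beltrami differential: using the hyperbolic metric to raise and lower indices, a harmonic $\kp$-valued $1$-form is identified with a form $\mu\,\diff\bar z\otimes\partial_z$ whose harmonicity for the metric Laplacian matches the harmonicity of $\mu$ as a Beltrami differential. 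Establishing this identification, which is precisely the derivative of the uniformization (holonomy) map, is the content I would secure first.

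With both forms expressed through the same harmonic datum, the remaining step is a pointwise algebraic computation. In a conformal coordinate $z$ with hyperbolic metric $\rho(z)\,\dzs$, I would write a harmonic Beltrami differential as the associated $\ksl(2,\bR)$-valued $1$-form in the standard basis, compute the coefficient form $B(\alpha\wedge\beta)$, and verify that
\begin{equation*}
  B(\alpha\wedge\beta) = c\,\Im\big(\mu\,\overline{\nu}\big)\,\diff A
\end{equation*}
for a universal constant $c$ depending only on the normalization of $B$; integrating over $X$ then gives $\omega_{\mathrm{SG}} = c\,\omega_{\pw}$. The identity reduces to the structure constants of $\ksl(2,\bR)$ together with the elementary fact that the trace form pairs $\kp$-components in a way proportional to the hyperbolic pointwise pairing of Beltrami differentials.

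The main obstacle is the middle step: pinning down the exact correspondence between the harmonic representative of a twisted class and a harmonic Beltrami differential, including the role of the $\kk$-valued part. Deforming $\rho$ changes both the complex structure and the conformal factor of the metric, so the naive $\kp$-part is not by itself $\diff_\rho$-closed; one must add a $\kk$-valued correction (an infinitesimal adjustment of the conformal factor) to obtain the harmonic representative, and then check that this correction contributes nothing to $B(\alpha\wedge\beta)$, because $B$ pairs $\kk$ with $\kp$ trivially in the relevant degree. Handling this correction term — equivalently, verifying that passing to harmonic representatives is compatible on both sides — is where the real work lies; once it is in place, the constant $c$, and hence the theorem, follow from a short symbol computation.
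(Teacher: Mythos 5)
This statement is not proved in the paper at all: it is quoted as background (with the citation to Goldman, Proposition~2.5) purely to motivate Theorem~\ref{thm:main}, so there is no in-paper argument to compare against. Judged on its own, your outline follows the standard Shimura--Goldman--Weil line of argument, which is essentially the proof in the cited reference: identify $\rH^1(\pi_1 S,\kg_{\mathrm{Ad}\,\rho})$ with $\diff_\rho$-closed $\kg$-valued forms on $X$, take harmonic representatives, use the Cartan decomposition $\kg=\kk\oplus\kp$ and the Eichler--Shimura-type identification of the $\kp$-part with harmonic Beltrami differentials (equivalently holomorphic quadratic differentials), and finish with a pointwise computation against the trace form. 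Two caveats. First, the entire content of the theorem is concentrated in the step you defer, namely that the derivative of the holonomy map sends a harmonic Beltrami differential to the harmonic representative of the corresponding twisted class; as written this is asserted, not established, so the proposal is a correct plan rather than a proof. Second, your dismissal of the $\kk$-valued correction is too quick: the orthogonality of $\kk$ and $\kp$ under $B$ kills only the cross terms $\kk\wedge\kp$, whereas if both arguments carry $\kk$-components the $\kk\wedge\kk$ term pairs nontrivially (the Killing form is negative definite on $\kk$). You need either to show that the harmonic representatives of Teichm\"uller deformations are purely $\kp$-valued (which is what actually happens for Fuchsian holonomy), or to show separately that the $\kk\wedge\kk$ contribution integrates to zero. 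With those two points supplied, the constant $c$ does follow from the structure constants of $\ksl(2,\bR)$ as you say.
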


The existence of hyperbolic structures on $S$, i.e.\ metrics with the constant Gauss curvature $-1$, is crucial for the definition of both $\omega_{\mathrm{SG}}$ and $\omega_{\pw}$ in the above theorem.
To understand this, we take $\omega_{\pw}$ as an example.
Fix a complex structure $J$ on $S$, that is, $J \in \cT(S)$, which corresponds to a hyperbolic structure (metric) $g$ on $S$.
By identifying the holomorphic cotangent space of $\cT(S)$ at $J$ with the space of holomorphic quadratic differentials on $(S,J)$, there is a natural Hermitian pairing on this space,
\begin{equation}
  \label{eq:hp}
  H_{\pw}(\varphi,\psi) = \int_S g^{-1}(\varphi,\bar{\psi}),
\end{equation}
where $\varphi,\psi$ are holomorphic quadratic differentials.
Then, the symplectic form $\omega_{\pw}$ for the tangent space of $\cT(S)$ at $J$ is induced from the pairing (\ref{eq:hp}).

Inspired by (\ref{eq:hp}), a natural question is whether one can generalize Theorem~\ref{thm:s-g} by replacing the hyperbolic structure with some weaker structures.
Mondello~\cite{Mondello_2010po} succeeds in obtaining such a generalization by considering the hyperbolic metrics admitting cone singularities.
\begin{theorem}[{\cite[Theorem~1.2]{Mondello_2010po}}]
  \label{thm:mon}
  Let $(S,x)$ be a closed surface with the marked points $x = (x_1,\cdots, x_n)$ and let $\cT(S,\theta)$ be the \tchm\ space of the hyperbolic metrics on $S$ with angles $\theta = (\theta_1,\cdots,\theta_n)$ at $x$.
  Denote the Weil-Petersson symplectic form on $\cT(S,\theta)$ by $\omega_{\pw}$.
  By using the local injection from $\cT(S,\theta)$ to $\Hom{\pi_1 (S - x)}{G}/G$, $G = \mathrm{SL}(2,\bR)$, $\cT(S,\theta)$ carries another symplectic form, denoted by $\omega_{\mathrm{SG}}$.
  Then for small enough $\theta$, $\omega_{\pw}$ and $\omega_{\mathrm{SG}}$ coincides up to a constant.
\end{theorem}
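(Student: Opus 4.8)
The plan is to follow the cohomological strategy behind Goldman's original Proposition~2.5, adapted to the presence of cone points. Fix a cone-hyperbolic metric $h\in\cT(S,\theta)$ with holonomy representation $\rho\colon\pi_1(S-x)\to G$, $G=\rSL(2,\bR)$, in which each peripheral loop around $x_i$ is sent to an elliptic element rotating by the angle $\theta_i$. The developing--holonomy construction makes $\mathrm{hol}\colon\cT(S,\theta)\to\Hom{\pi_1(S-x)}{G}/G$ locally injective, and its derivative at $[h]$ identifies $T_{[h]}\cT(S,\theta)$ with a subspace of the group cohomology $H^1(\pi_1(S-x);\ksl(2,\bR)_\rho)$, where $\ksl(2,\bR)_\rho$ is the adjoint local system. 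First I would pin down exactly which subspace: because the angles are held fixed, the admissible deformations are those that keep each peripheral holonomy inside its elliptic conjugacy class, so the image lands in the parabolic (relative) cohomology cut out by this peripheral constraint. Both $\omega_{\pw}$ and $\omega_{\mathrm{SG}}$ then become bilinear pairings on this single space, and the goal is to show they are proportional.

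On the representation side, $\omega_{\mathrm{SG}}$ is the Goldman/Atiyah--Bott pairing: represent classes by $\nabla_\rho$-flat $\ksl(2,\bR)$-valued $1$-forms $a,b$ on $S-x$ and set $\omega_{\mathrm{SG}}([a],[b])=\int_{S-x}B(a\wedge b)$, with $B$ the Killing form. On the metric side I would use the underlying Riemann surface with marked points (the conformal class of $h$, uniformized via McOwen--Troyanov) to produce canonical harmonic representatives: a deformation of $h$ is encoded by a harmonic Beltrami differential $\mu$, equivalently a bounded holomorphic quadratic differential with at worst a mild pole at each $x_i$ of order governed by $\theta_i$, and $\omega_{\pw}$ is the imaginary part of the $L^2$-pairing $\int_{S-x}\mu\bar\nu\,\diff A_h$. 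The comparison of the two descriptions is, on the smooth locus, a pointwise linear-algebra identity: the flat $\ksl(2,\bR)$-connection splits according to the Cartan decomposition determined by $h$, its Higgs-type off-diagonal part is exactly the Beltrami differential, and inserting this splitting into $B(a\wedge b)$ reproduces a fixed multiple of $\mu\bar\nu\,\diff A_h$. Establishing this dictionary, and checking that the harmonic representatives are the correct ones to feed into the cup product, is the technical heart on the interior.

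The delicate point, and the reason for requiring $\theta$ small, is the behaviour at the cone points. To pass from the pointwise identity to equality of the two global pairings one integrates by parts over $S_\varepsilon:=(S-x)\setminus\bigcup_i D_\varepsilon(x_i)$ and lets $\varepsilon\to0$, which produces boundary integrals over the circles $\partial D_\varepsilon(x_i)$. The main obstacle is to show these boundary terms vanish in the limit. This needs the precise asymptotics near a cone of angle $\theta_i$: in a local conformal coordinate the metric behaves like $\lvert z\rvert^{2(\theta_i/2\pi-1)}\dzs$, the admissible quadratic differentials acquire poles whose order is controlled by $\theta_i$, and the flat sections grow at a rate set by the elliptic rotation number. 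I would estimate each boundary integrand against these model asymptotics and verify that, provided every $\theta_i$ is small enough so that the relevant exponents keep the forms integrable and make the circle integrals $O(\varepsilon^{\text{positive}})$, the contributions die. The smallness of $\theta$ is precisely what guarantees this integrability and bounds the orders of the poles, so the argument genuinely breaks for large angles, consistent with the hypothesis.

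Granting the vanishing of the boundary terms, Stokes' theorem upgrades the interior pointwise identity to the global equality $\omega_{\mathrm{SG}}=c\,\omega_{\pw}$ on $\cT(S,\theta)$ with a universal constant $c$ depending only on the normalisation of $B$. I would fix $c$ by evaluating both sides on one explicit deformation, for instance an infinitesimal Fenchel--Nielsen twist along a simple closed geodesic disjoint from the cone points, where Wolpert's formula computes $\omega_{\pw}$ and Goldman's twist flow computes $\omega_{\mathrm{SG}}$. I expect the cone-point estimate of the previous paragraph to be the genuinely new and hardest ingredient relative to the closed-surface case; the interior computation and the determination of $c$ are essentially as in Goldman's original argument.
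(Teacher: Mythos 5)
This statement is not proved in the paper at all: it is Theorem~1.2 of Mondello's article, quoted verbatim as background motivation for the ``flat'' analogue (Theorem~\ref{thm:main}) that the paper actually establishes. There is therefore no in-paper proof to compare your attempt against, and a correct submission here would simply have been the citation. That said, your outline does track the strategy of Goldman's Proposition~2.5 and of Mondello's actual argument: identifying the tangent space with the relative (parabolic) cohomology of $\pi_1(S-x)$ with coefficients in the adjoint local system, matching the cup-product pairing with the $\rL^2$ pairing of harmonic Beltrami differentials via the Cartan decomposition pointwise on the smooth locus, and reducing the global statement to the vanishing of boundary integrals over shrinking circles around the cone points --- which is indeed where the smallness hypothesis on $\theta$ enters, through the integrability exponents of the model asymptotics. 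As a sketch it is credible, but the genuinely hard steps (the precise pole orders of the admissible quadratic differentials, the growth of the flat sections near an elliptic fixed point, and the verification that the harmonic representatives compute the cup product in relative cohomology) are asserted rather than carried out, so this should be read as a plan rather than a proof. Note also that the present paper's own contribution deliberately avoids this representation-variety route, since $\mathrm{SE}(2,\bR)$ has degenerate Killing form; its substitute for $\omega_{\mathrm{SG}}$ is the complex hyperbolic form built from Deligne--Mostow local systems, and its proof technique is modeled on Tian's period-map argument rather than on the Goldman pairing.
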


Ever since Troyanov's work~\cite{Troyanov_1986le}, people have learned a lot about the metrics with cone singularities.
Especially, many results about the smooth metrics can be generalized to this special singular case.
The relation between the above two theorems provides an example of this general philosophy.

Admittedly, compared with smooth metrics, metrics with cone singularities are more complicated analytically.
However, in some sense, the metrics with cone singularities are more flexible than their smooth counterpart, which is especially useful when we are considering the metrics with curvature constraints, e.g.\ with constant scalar curvature.
For example, in Theorem~\ref{thm:mon}, this extra flexibility is used to break the genus constraints imposed by the Gauss-Bonnet formula.

This paper tries to exploit the flexibility of metrics with cone singularities in another way.
Recall that most surfaces do not carry a flat metric.
But, if we admit cone singularities, singular flat metrics always exist when the cone angles and the genus satisfy a Gauss-Bonnet type equality.
Motivated by this fact, this paper develops a ``flat'' variant of Theorem~\ref{thm:mon}.

\subsection{}
To find a ``flat'' variant of Theorem~\ref{thm:mon}, we need to find proper substitutes for $\omega_{\pw}$ and $\omega_{\mathrm{SG}}$.
Let $\cT_{g,n}$ be the \tchm\ space of genus $g$ closed surfaces with $n$ marked points.
It is known that $\cT_{g,n}$ is isomorphic to the moduli space of the flat metrics with cone singularities (also called singular Euclidean metrics) on such a surface, which we will recall in Section~\ref{sec:moduli} for $g= 0$ case.
Therefore, we can define the Weil-Petersson metric (symplectic form) on $\cT_{g,n}$ using the singular Euclidean metric in the same fashion as (\ref{eq:hp}) in the hyperbolic case.

In contrast, in the current situation, a proper substitute for $\omega_{\mathrm{SG}}$ is not so obvious.
Similar to the hyperbolic case, there is a map from $\cT_{0,n}$ to $\Hom{\pi_1 (S - x)}{\mathrm{SE} (2,\bR)}/\mathrm{Sim} (2)$, where $\mathrm{SE} (2,\bR)$ is the holonomy group for the flat metrics and $\mathrm{Sim} (2) \coloneqq \bR_+ \ltimes {\mathrm{SE} (2,\bR)}$, c.f.~\cite{Troyanov_2007th}.
However, this map does not provide a symplectic form on $\tnn$ via pull-back as in~\cite{Goldman_1984aa,Mondello_2010po}.
The problem is that $\mathrm{SE} (2,\bR)$ is not a semisimple group, that is, this group does not have a non-degenerate Killing form.
As a result, unlike $\omega_{\mathrm{SG}}$, we cannot use the usual method of defining a symplectic form on $\Hom{\pi_1 (S - x)}{\mathrm{SE} (2,\bR)}/{\mathrm{Sim} (2)}$.

Nevertheless, for the genus zero case, i.e.\ $S$ is a sphere, a special symplectic form, which we call the complex hyperbolic form, does play the role of $\omega_{\mathrm{SG}}$.
In fact, a sphere equipped with a flat metric with cone singularities is equivalent to a more familiar concept, a polyhedron.
From this viewpoint, the moduli space of flat metrics with cone singularities has undergone comprehensive study, c.f.~\cite{Thurston_1998sh,Veech_1993fl}.
Roughly speaking, the logarithm of the area of polyhedra provides a \kah\ potential on the moduli space and the complex hyperbolic form is the \kah\ form of such a \kah\ potential.
We will define this form precisely in Section~\ref{sec:moduli}.

The main result that we are going to show, that is, a ``flat'' variant of Theorem~\ref{thm:mon}, is an identification between two symplectic forms on $\cT_{0,n}$.

\begin{theorem}
  \label{thm:main}
  Let $x = \aset{x_0, \cdots, x_{n-1}}$ be the marked points on the two-dimensional sphere $S^2$ and let $\alpha = \aset{\alpha_0,\cdots,\alpha_{n-1}}$ be a set of real numbers such that $0 < \alpha_i < 1$, $i = 0, \cdots,n-1$, and $\sum_{i=0}^{n-1}\alpha_i = 2$.
Denote $\tnn$ to be the \tchm\ space of the flat metrics with cone angles $\alpha$ at $x$.
  Then, on $\tnn$, the complex hyperbolic form coincides with the Weil-Petersson form.
\end{theorem}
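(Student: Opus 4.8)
The plan is to prove the identity by exhibiting \emph{both} symplectic forms as the imaginary part of one and the same Hermitian pairing, the bridge being the developing $1$-form of the singular Euclidean metric. Throughout I use the model of $\tnn$ recalled in Section~\ref{sec:moduli}: after normalising three marked points by a Möbius transformation, a point of $\tnn$ is a configuration $(x_0,\dots,x_{n-1})$ on $\bC\subset\bP^1$, the associated singular flat metric is $\lvert\Omega\rvert^2=\rho\,\dzs$ with the multivalued holomorphic $1$-form $\Omega=\prod_i (z-x_i)^{-\alpha_i}\,\diff z$, and $\Omega$ represents a class in $H^1(\bP^1\setminus x,\cL_\alpha)$ for the rank-one local system $\cL_\alpha$ whose monodromy around $x_i$ is $e^{-\tpi\alpha_i}$. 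Writing $\langle\eta,\eta'\rangle=\tfrac{\opi}{2}\int\eta\wedge\overline{\eta'}$ for the intersection Hermitian form, the area is $A=\langle\Omega,\Omega\rangle$, and the conditions $0<\alpha_i<1$, $\sum_i\alpha_i=2$ are exactly what force this form to have signature $(1,n-3)$ on the $(n-2)$-dimensional space $H^1(\cL_\alpha)$, so that the period line $[\Omega]$ lands in complex hyperbolic space $\bP(H^1(\cL_\alpha))^+$.

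First I would pin down the complex hyperbolic form. By the construction recalled in Section~\ref{sec:moduli}, up to a positive constant $\omega_{\mathrm{ch}}$ is the \kah\ form with potential $\log A$. Using $\partial_{x_j}\Omega=\tfrac{\alpha_j}{z-x_j}\Omega$, which again lies in $H^1(\cL_\alpha)$, one computes $\partial_{x_j}A=\langle\partial_{x_j}\Omega,\Omega\rangle$ and $\partial_{x_j}\partial_{\bar{x}_k}A=\langle\partial_{x_j}\Omega,\partial_{x_k}\Omega\rangle$, so that
\begin{equation*}
\partial_{x_j}\partial_{\bar{x}_k}\log A=\frac{\langle\partial_{x_j}\Omega,\partial_{x_k}\Omega\rangle}{\langle\Omega,\Omega\rangle}-\frac{\langle\partial_{x_j}\Omega,\Omega\rangle\,\langle\Omega,\partial_{x_k}\Omega\rangle}{\langle\Omega,\Omega\rangle^{2}}.
\end{equation*}
This is precisely the Bergman (Fubini-Study) form of the line $[\Omega]$, and it identifies $\omega_{\mathrm{ch}}$ with the projectivised intersection form pulled back along the period map.

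Next I would compute $\omega_{\pw}$ from the same data. The cotangent space to $\tnn$ is the space $Q$ of meromorphic quadratic differentials $\phi$ on $\bP^1$ with at most simple poles at the $x_i$ (the hypothesis $\alpha_i<1$ is exactly what makes $\int\lvert\phi\rvert^2/\rho$ converge near the cone points), and the Weil-Petersson co-metric is $\langle\phi,\psi\rangle_{\pw}=\int_{\bP^1}\phi\,\overline{\psi}/\rho$. The key observation is that division by $\Omega$ sends $\phi\mapsto\phi/\Omega$, a $1$-form of monodromy $e^{\tpi\alpha_i}$, i.e.\ a class in $H^1(\cL_{-\alpha})=\overline{H^1(\cL_\alpha)}$, and that
\begin{equation*}
\langle\phi,\psi\rangle_{\pw}=\int_{\bP^1}\frac{\phi\,\overline{\psi}}{\rho}=\frac{\opi}{2}\int_{\bP^1}\frac{\phi}{\Omega}\wedge\overline{\frac{\psi}{\Omega}}=\Bigl\langle\frac{\phi}{\Omega},\frac{\psi}{\Omega}\Bigr\rangle.
\end{equation*}
Thus division by $\Omega$ is an isometry from the Weil-Petersson co-metric onto the intersection form restricted to the image of $Q$ inside $H^1(\cL_{-\alpha})$. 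Dualising this isometry and comparing with the previous step should identify $\omega_{\pw}$ with the same projectivised intersection form, whence $\omega_{\pw}=c\,\omega_{\mathrm{ch}}$ after taking imaginary parts.

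The step I expect to be the main obstacle is the precise matching in the last paragraph: reconciling the cotangent description of $\omega_{\pw}$ with the tangent, Bergman description of $\omega_{\mathrm{ch}}$. Concretely, I must show that the image of $Q$ under division by $\Omega$ is exactly the hyperplane of $H^1(\cL_{-\alpha})$ dual to $\bC\cdot\Omega$, so that removing the scaling direction $\Omega$ — which varies the total area but is not a \tchm\ direction — accounts for the second, genuinely projective term of the Bergman form above, rather than being an artefact. This requires a careful residue and integration-by-parts analysis near the cone points, where $\Omega$ is multivalued while the quadratic differentials have only simple poles; here the bounds $0<\alpha_i<1$ guarantee that every boundary contribution vanishes and every integral converges, and $\sum_i\alpha_i=2$ fixes the signature $(1,n-3)$ that puts the period map into complex hyperbolic space. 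Once this duality and the single overall constant $c$ are established, the theorem follows.
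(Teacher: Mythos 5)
Your global picture is the right one --- both forms are shadows of the intersection form on $\rH^1(X,F)$, and the paper's proof (modelled on Tian's argument for Calabi--Yau families) is organised around exactly this idea --- but the proposal is missing its central step. Writing $\dot\Omega$ for the Gauss--Manin derivative of the period class in the direction of a tangent vector $v$, the Fubini--Study computation in your first paragraph gives $\omega_{\mathrm{ch}}(v,\bar v)=\pm\langle\beta,\beta\rangle/\langle\Omega,\Omega\rangle$, where $\beta$ is the component of $\dot\Omega$ in $\hzo(X,F)=\overline{Q/\Omega}$, while your second paragraph identifies the Weil--Petersson cometric with the intersection form on $Q/\Omega$. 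To conclude you must show that $\beta$ is precisely $\overline{\psi_v/\Omega}$, where $\psi_v\in Q$ is the quadratic differential metrically dual to $v$; equivalently, that the derivative of the period map is, modulo $\bC\,[\Omega]$, the contraction of the harmonic representative $a=\bar\psi_v e^{-\gamma}$ with $\Omega$. Knowing that $\overline{Q/\Omega}$ coincides with the hyperplane $\Omega^{\perp}$ \emph{as subspaces} (your proposed ``main obstacle'') does not give this: it matches two subspaces, not the two specific linear maps $v\mapsto\beta(v)$ and $v\mapsto\overline{\psi_v/\Omega}$. This identification is the analogue of Tian's lemma and is Lemma~\ref{lm:key} of the paper; proving it forces one to represent $v$ by a Beltrami coefficient, solve the Beltrami equation, and compare the resulting motion of the punctures with the flow of the vector field $Y\partial/\partial z$ obtained by integrating $a-\mu$ against the kernel $R(\zeta,z)$. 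Nothing in the proposal plays this role.

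Second, the analytic content is not a routine ``residue and integration-by-parts'' check. Already your formula $\partial_{x_j}\partial_{\bar x_k}A=\langle\partial_{x_j}\Omega,\partial_{x_k}\Omega\rangle$ is not an absolutely convergent integral: near $x_j$ the integrand of $\langle\partial_{x_j}\Omega,\partial_{x_j}\Omega\rangle$ has size $|z-x_j|^{-2-2\alpha_j}$. The pairing must be taken in cohomology through the isomorphism $\opj:\rH^1_c(X,F)\to\rH^1(X,F)$, and showing that this cohomological cup product is computed by the limit of truncated integrals requires the boundary estimates (\ref{eq:est-yu})--(\ref{eq:o-eta}); these in turn rest on the H\"older regularity of $Y$ near the cone points (Proposition~\ref{prop:two-cl}, Lemmas~\ref{lm:int2} and~\ref{lm:asym}), where the hypothesis $0<\alpha_i<1$ enters quantitatively and the relevant vector field fails to be Lipschitz. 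As it stands your argument establishes the two easy identifications at either end but not the bridge between them, and that bridge is where essentially all of the paper's work lies.
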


Because both the complex hyperbolic form and the Weil-Petersson form are \kah\ forms for certain metrics on $\tnn$, the above theorem is equivalent to saying that these two metrics coincide.
Moreover, since the complex hyperbolic form is closed by definition, we have the following corollary of Theorem~\ref{thm:main}.
\begin{corollary}
  \label{cor:main}
  The Weil-Petersson form on $\tnn$ defined using the flat metrics with cone angles is closed.
\end{corollary}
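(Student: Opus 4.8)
The plan is to read off the corollary directly from Theorem~\ref{thm:main}; the whole content sits in that theorem, and essentially nothing beyond it is required here. Recall from Section~\ref{sec:moduli} that the complex hyperbolic form is the \kah\ form attached to the \kah\ potential given by the logarithm of the area, so locally it takes the shape $\omega = i\,\partial\bar{\partial}\phi$ for a real-valued $\phi$. For any such form,
\begin{equation*}
  \diff\omega = (\partial + \bar{\partial})(i\,\partial\bar{\partial}\phi) = i\,\partial^2\bar{\partial}\phi + i\,\bar{\partial}\partial\bar{\partial}\phi = 0,
\end{equation*}
because $\partial^2 = \bar{\partial}^2 = 0$ and $\partial\bar{\partial} = -\bar{\partial}\partial$; this is the sense in which the complex hyperbolic form is closed by definition.

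First I would invoke Theorem~\ref{thm:main}, which identifies $\omega_{\pw}$ with the complex hyperbolic form on $\tnn$. Pairing this equality with the computation above yields $\diff\omega_{\pw} = 0$, which is exactly the claim. No estimates or further computations enter once the theorem is available.

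The statement is worth isolating because the closedness of $\omega_{\pw}$ is not visible from its own construction. In the smooth hyperbolic case the corresponding fact is a theorem of Ahlfors, established through a delicate variational computation; in the present singular-flat setting $\omega_{\pw}$ is built from a Hermitian pairing on the tangent space of $\tnn$, and there is no a priori reason for the resulting real $2$-form to be closed. A self-contained alternative would be to compute $\diff\omega_{\pw}$ head-on, adapting Wolpert-type formulas to flat metrics with cone singularities, and I expect that to be the genuinely hard part, since one must keep track of the contributions localized at the cone points. The merit of going through Theorem~\ref{thm:main} is precisely that it bypasses this, extracting closedness for free from the identification with a form that is closed by construction.
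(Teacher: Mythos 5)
Your proposal is correct and matches the paper's own argument: the corollary is deduced immediately from Theorem~\ref{thm:main} together with the observation that the complex hyperbolic form, being the K\"ahler form $i\,\partial\bar{\partial}$ of the log-area potential, is closed by construction. The explicit check that $\diff(i\,\partial\bar{\partial}\phi)=0$ and the surrounding commentary are consistent with, though more detailed than, what the paper records.
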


Besides the relation between Theorem~\ref{thm:main} and Goldman and Mondello's results, Theorem~\ref{thm:main} can also be viewed as a variant of a seminal result due to Tian~\cite[Theorem~2]{Tian_1987aa}.
In fact, such a similarity inspires the proof of Theorem~\ref{thm:main}.
We will explain it in more detail in Section~\ref{sec:proof-of-main}.

\subsection{An outline of the paper.}
This paper is organized as follows.
In Section~\ref{sec:moduli}, we review the definition of the flat metric with cone singularities and the moduli space of such metrics.
Especially, we recall the definition of the complex hyperbolic form and the Weil-Petersson form.
In Section~\ref{sec:some-ana-res}, we show some analytic results about an integral operator needed in the proof of Theorem~\ref{thm:main}.
In Section~\ref{sec:proof-of-main}, we explain the relation between Theorem~\ref{thm:main} and Tian's result and finish the proof of it.

\subsection*{Acknowledgement.}
The author would like to thank Prof.\ Gang Tian for helpful suggestions and encouragement during writing this paper and the anonymous referee for reading the paper carefully and the very inspiring comments.
The author is partially supported by NSFC Grant no. 12101361, the project of Young Scholars of SDU, and the Fundamental Research Funds of SDU, Grant no. 2020GN063.

\section{Moduli space of singular Euclidean metrics on a sphere}
\label{sec:moduli}

\subsection{}
In the section, we set up the notations used in this paper and review the definition of a singular Euclidean metric on a sphere.
Especially, we discuss the moduli space of such metrics, which can be identified with the usual \tchm\ space $\tnn$.
Next, following the approach of~\cite{Deligne_1986mo}, we will define the complex hyperbolic form on $\tnn$ by using such an identification.
Meanwhile, we will also introduce a variant of the usual Weil-Petersson form on $\tnn$ by replacing the hyperbolic metric with the singular Euclidean metric.

\subsection{}
In the whole paper, we will identify the two-dimensional sphere $S^2$ with $ \bC \cup \aset{\infty}$.
Let $n > 3$ be an integer.
On $S^2$, we choose $z_0 = 0$, $z_1 = 1$, \ldots, $z_{n-2} = n-2$ and $z_{n-1} = \infty$ as $n$ marked points.
We denote the punctured sphere associated with the chosen marked points by $X \coloneqq S^2 - \aset{z_0,z_1,\cdots,z_{n-1}}$.

For various places, we need to specify a complex structure on $S^2$.
Especially, for $S^2$ carrying the standard complex structure on $\bC \cup \aset{\infty} = \cpo$, we also use the notation $\hbC$.
Therefore, as a subset of $\hbC$, $X$ also carries a complex structure.
$X$ with such a complex structure, if necessary, will be denoted by $\Xst$.\footnote{Certainly, on $X$, complex structures other than $\Xst$ exist.
For example, let $Y$ be a surface defined by $\cpo$ removing punctures different from $z_i$ and $F$ be a diffeomorphism between $X$ and $Y$.
By pulling back the complex structure on $Y$ by $F$, $X$ has another complex structure.}

\subsection{Flat cone spheres.}
Let $\alpha = \aset{\alpha_0,\alpha_1,\cdots,\alpha_{n-1}}$ be a set of real numbers.
For a Riemannian metric $g$ on $X \subseteq S^2$, we call $g$ \emph{a flat metric on $S^2$ with the cone angle $\alpha_i$ at $z_i$, $i = 0,1,\cdots,n-1$},\footnote{Some authors may use different convention to define cone angles in literature.
  For example, the cone angle $\theta_i$ in~\cite{Mondello_2010po} equals to $2\pi(1- \alpha_i)$.}
if the following conditions hold:
\begin{enumerate}[label=(\arabic*)]
\item There is a complex structure on $S^2$ associated with $g$, which provides a local holomorphic coordinate system near each point of $S^2$.
\item Near any point of $S^2$ (Not only $X$!), $g = e^{\gamma} \diff w\diff \bar{w}$ holds locally, where $w$ is a local holomorphic coordinate and $\gamma$ is a locally defined smooth function away from $z_i$, $ i= 0,1,\cdots, n-1$.
\item The Gauss curvature of $g$ is equal to $0$, i.e.\ $\pdv{\gamma}{w,\wb} =0$ in the local holomorphic coordinate.
\item For $ i= 0,1,\cdots, n-1$, $g$ has the asymptotics near the singularity $z_i$
  \begin{equation}
    \label{eq:asymp}
    \gamma =
-2\alpha_i \log{|w(z)- w(z_i)|} + \cO(1),\; z \rightarrow z_i,
\end{equation}
  using the holomorphic coordinate around $z_i$.
\end{enumerate}
We call that the complex structure in the above definition is \emph{compatible} with $g$.

As we have used in the Introduction, in this paper, a \emph{singular Euclidean metrics} on $S^2$ is the synonym of a flat metric on $S^2$ with the cone angle $\alpha_i$ at $z_i$, $i = 0,1,\cdots,n-1$.

The above definition for the singular Euclidean metrics also works for a general surface.
For the sphere case that we are mainly concerned with, we also call ${(S^2,g)}$ \emph{a flat cone sphere} if $g$ is a singular Euclidean metric.
A good reference about the singular Euclidean metrics is~\cite{Troyanov_2007th}.

The Gauss-Bonnet theorem gives a necessary condition on cone angles for the existence of singular Euclidean metrics on $S^2$,
\begin{equation}
  \label{eq:gb-cond}
  \sum^{n-1}_{i=0} \alpha_i  = 2.
\end{equation}
It turns out that the above condition is also sufficient for the existence problem, which is guaranteed by the following theorem of Troyanov, also see~\cite[p. 90, Th{\'e}or{\`e}me]{Troyanov_1986le}.

\begin{theorem}[{\cite[Theorem~2.22]{Troyanov_2007th}}]
  \label{thm:ty}
  Let $S$ be a closed connected oriented surface. Fix $n$ distinct points $p_1, p_2, \cdots, p_n \in S$ and $n$ real numbers $\alpha_1, \alpha_2, \cdots,\allowbreak \alpha_n \in (-\infty,1)$.

  Fix a complex structure on $S$.
  There exists a flat metric $g$ on $S$ with a conical singularity of cone angle $\alpha_j$ at $p_j$ $(j = 1, \cdots, n)$ and the chosen complex structure is compatible with $g$ if and only if the Gauss–Bonnet condition $\chi (S) = \Sigma_{j=1}^n\alpha_j$ holds.
  This metric is unique up to homothety.
\end{theorem}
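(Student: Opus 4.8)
The plan is to fix a smooth background metric $g_0$ on $S$ compatible with the given complex structure and to seek the desired flat metric inside its conformal class, writing $m = e^{2u}g_0$. Under a conformal change the curvature transforms by $K_m = e^{-2u}(K_{g_0} - \Delta_{g_0}u)$, where $\Delta_{g_0}$ is the Laplace–Beltrami operator of $g_0$ normalized so that $\int_S \Delta_{g_0}u\,\diff A_{g_0} = 0$. Requiring $K_m = 0$ away from the marked points and imposing the asymptotics $u = -\alpha_j\log|w - w_j| + \cO(1)$ near $p_j$ in a local holomorphic coordinate $w$ — which is exactly what produces a conical singularity of cone angle $\alpha_j$, in agreement with the convention of \eqref{eq:asymp} — turns the geometric problem into the single distributional \emph{linear} Poisson equation
\begin{equation*}
  \Delta_{g_0}u = K_{g_0} - 2\pi\sum_{j=1}^{n}\alpha_j\,\delta_{p_j},
\end{equation*}
the Dirac masses being taken with respect to the area form $\diff A_{g_0}$. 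The linearity here, in contrast with the Liouville equation governing the constant negative curvature case, is what keeps the whole argument elementary.

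Necessity of the Gauss–Bonnet condition is then immediate: integrating the displayed equation over $S$, the left-hand side vanishes because $\Delta_{g_0}$ is a divergence, while the right-hand side equals $2\pi\chi(S) - 2\pi\sum_j\alpha_j$ by the classical Gauss–Bonnet theorem. Hence a solution can exist only if $\chi(S) = \sum_j\alpha_j$.

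For sufficiency I would produce the solution explicitly with the help of the Green's function $G(\cdot,q)$ of $\Delta_{g_0}$, which satisfies $\Delta_{g_0}G(\cdot,q) = \delta_q - 1/\Vol_{g_0}(S)$ and has the local expansion $G(x,q) = \frac{1}{2\pi}\log d_{g_0}(x,q) + \cO(1)$. Setting $u_1 = -2\pi\sum_j\alpha_j\,G(\cdot,p_j)$ absorbs all the Dirac masses and installs the correct logarithmic singularity $u_1 = -\alpha_j\log d_{g_0}(\cdot,p_j) + \cO(1)$ at each $p_j$. Writing $u = u_1 + v$, the equation for the remainder becomes $\Delta_{g_0}v = K_{g_0} - \frac{2\pi}{\Vol_{g_0}(S)}\sum_j\alpha_j$, whose right-hand side is smooth and, precisely when $\chi(S) = \sum_j\alpha_j$, has vanishing integral; by the Fredholm alternative on the closed surface $S$ it then admits a smooth solution $v$, unique up to an additive constant. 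The resulting $u$ is smooth on $S \setminus \{p_1,\dots,p_n\}$ and carries the prescribed conical behaviour, so $m = e^{2u}g_0$ is the sought metric.

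The step I expect to require the most care is verifying that $u$ genuinely defines a conical singularity of angle $\alpha_j$ rather than some rougher behaviour: one must control the subleading term $u + \alpha_j\log d_{g_0}(\cdot,p_j)$ near $p_j$ — showing it is, say, bounded and Hölder — which rests on the fine regularity of $G$ together with local elliptic estimates for $v$. Finally, uniqueness up to homothety follows by comparing two solutions $u$ and $\tilde u$: their difference solves $\Delta_{g_0}(u - \tilde u) = 0$ and is bounded near each $p_j$ (the logarithmic singularities cancel), hence extends to a global harmonic function on the closed surface $S$ by removability of bounded singularities, and is therefore constant; adding a constant to $u$ rescales $m = e^{2u}g_0$ by a positive factor, i.e.\ is exactly a homothety.
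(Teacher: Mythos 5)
The paper does not prove this statement: it is quoted verbatim as Troyanov's theorem and used as a black box (the only role it plays here is to identify $\cS(\alpha)$ with $\cT_{0,n}$ and to justify the explicit formula (\ref{eq:sp-metric})). So there is no ``paper proof'' to compare against; what you have written is an independent, self-contained argument, and it is the standard and correct one for the flat case. Your key simplification --- that prescribing zero curvature with conical singularities in a fixed conformal class reduces to the \emph{linear} Poisson equation $\Delta_{g_0}u = K_{g_0} - 2\pi\sum_j \alpha_j\,\delta_{p_j}$, solved explicitly by Green's functions plus the Fredholm alternative --- is exactly why the flat case of Troyanov's theorem is elementary compared with the curved cases (which require a genuine Liouville-type nonlinearity), and your normalizations (the factor $2\pi$, the sign of the Laplacian, the identification of the condition $u=-\alpha_j\log|w-w_j|+\cO(1)$ with the convention (\ref{eq:asymp})) are all consistent. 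The necessity, existence, and uniqueness-up-to-homothety steps are each sound.

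One point deserves to be made explicit in the necessity and uniqueness directions: the definition of a conical singularity only gives $u+\alpha_j\log|w-w_j|=\cO(1)$, and a bounded remainder does not in general have a tame distributional Laplacian. What saves you is that in the flat case this remainder is \emph{harmonic} on the punctured disc (both $u$ and $\log|w-w_j|$ are), hence by removability of bounded singularities of harmonic functions it extends smoothly across $p_j$; only then is the distributional identity $\Delta_{g_0}u = K_{g_0} - 2\pi\sum_j\alpha_j\delta_{p_j}$ legitimate, and only then is the difference of two solutions globally harmonic. You invoke this removability for uniqueness but should also invoke it when first deriving the distributional equation from a given metric. With that sentence added, the argument is complete; the hypothesis $\alpha_j<1$ is not needed for the PDE but is what makes the resulting singularity a genuine cone of positive angle with locally finite area.
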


\begin{example}
  If the chosen complex structure on $S^2$ is the standard one, i.e.\ $\hbC$, for any cone angles $\aset{\alpha_i}$ satisfying (\ref{eq:gb-cond}), we can write (the \kah\ form of) the flat metric $g$ asserted by the above theorem explicitly,
  \begin{equation}
    \label{eq:sp-metric}
    g = \prod^{n-2}_{i = 0} |z - z_i|^{-2\alpha_i} \dzs,
  \end{equation}
  where and in the following, we use $\dzs$ as a shorthand for $\frac{\diff z \wedge \diff \zb}{-\tpi}$.

  One can check that $g$ has the desired singularity at $z_{n-1} = \infty$.
  By Theorem~\ref{thm:ty}, we know that such a metric is unique up to homothety.
\end{example}

\noindent\textbf{Assumption.}
In the rest of the paper, we fix $\alpha = \aset{\alpha_0,\alpha_1,\cdots,\alpha_{n-1}}$ to be a set of real numbers satisfying the Gauss-Bonnet condition (\ref{eq:gb-cond}) and
\begin{equation}
  \label{eq:alpha-range}
  0 < \alpha_i < 1,\quad i = 0,1,\cdots,n-1.
\end{equation}

\begin{remark}
  Note that by Theorem~\ref{thm:ty}, the flat metric exists for a wider range of $\alpha$ than what we have assumed in (\ref{eq:alpha-range}).
  However, similar to Theorem~\ref{thm:mon}, to prove Theorem~\ref{thm:main}, we have to restrict $\alpha$ to a narrower range.
  In fact, even for the definition of the two symplectic forms appearing in Theorem~\ref{thm:main}, (\ref{eq:alpha-range}) is also necessary.
\end{remark}

\subsection{Moduli spaces.}
\label{sub:moduli-space}
We are interested in the moduli space consisting of all flat cone spheres.
There are several definitions of this moduli space in literature.
Here, we follow the approach of~\cite{Troyanov_2007th,Veech_1993fl}, which is different from, though closely related to, the definitions given in~\cite{Schwartz_2015no,Thurston_1998sh}.

Let $\cE(\alpha)$ be the set of all flat metrics on $S^2$ with cone angles $\alpha = \aset{\alpha_0,\cdots,\allowbreak\alpha_{n-1}}$ at punctures $z_0,\cdots,z_{n-1}$.
We denote $\dzg$ to be the group of diffeomorphisms that are isotopic to the identity through isotopies fixing the punctures.
Via the pull-back metrics, $\dzg$ can act on $\cE(\alpha)$.
Therefore, we can define a group action of $\bR_+ \times \dzg$ as follows:
\begin{equation*}
  \begin{aligned}
    (\bR_+ \times \dzg) &\times \cE(\alpha) && \rightarrow \cE(\alpha)\\
    (\lambda, h) &\times m && \mapsto \lambda h^*m
  \end{aligned}.
\end{equation*}

We define \emph{the (\tchm) moduli space of flat cones spheres} to be the quotient of such an action
\begin{equation*}
  \cS(\alpha)\coloneqq \cE(\alpha)/(\bR_+ \times \dzg).
\end{equation*}
Recall that one way to define the \tchm\ space of the punctured spheres is
\begin{equation*}
  \cT_{0,n} \coloneqq \aset{\text{complex structures on }S^2}/\dzg.
\end{equation*}
One corollary of Theorem~\ref{thm:ty} is that $\cS(\alpha)$ is canonically isomorphic to $\cT_{0,n}$.
This paper will use $\cS(\alpha)$ and $\tnn$ interchangeably.

\begin{remark}
  \label{rk:wn}
  As usual, let $\pdg$ be the group of orientation-preserving diffeomorphisms fixing the punctures and $\pmg \coloneqq \pdg/ \dzg$ be the pure mapping class group.
  As in~\cite[\S~5.1]{Troyanov_2007th}, we define the moduli space of punctured spheres to be\footnote{For the definition of the moduli space, the definition given here follows the conventions used in the algebraic geometry as~\cite{Arbarello_2011ge}.
    However, in the literature of \tchm\ theory, the (Riemann) moduli space is usually defined using the full mapping class group and $\cM_{0,n}$ defined in this way is sometimes called the Torelli space, e.g.~\cite{Nag_1988co}.}
  \begin{equation*}
    \cM_{0,n} \coloneqq \cT_{0,n} / \pmg.
  \end{equation*}
  Moreover, we denote $\cW_n$ to be the configuration space of punctures on $\hbC^n$,
  \begin{equation*}
    \cW_n \coloneqq \aset{\bfw = (w_0, w_1,\cdots, w_{n-1})\in \hbC^{n}| w_0 = 0,\, w_1=1,\, w_{n-1}= \infty \text{ and }w_i \neq w_k\text{ for }i\neq k}.
  \end{equation*}
By this definition, $\cW_n$ is an open subset of $\bC^{n-3}$.
  It is known that $\cM_{0,n}$ is isomorphic to $\cW_n$, see~\cite[\S~6.1]{Troyanov_2007th} or~\cite{Patterson_1973so,Nag_1988co}.

  In~\cite{Schwartz_2015no} (and in~\cite{Thurston_1998sh} implicitly), the author uses a concept called labeled moduli space of flat cone spheres $\sM(\alpha)$.
  In our notation, $\sM(\alpha)$ is just $\cS(\alpha)/\pmg$.
  Therefore, $\sM(\alpha)$ is canonically isomorphic to $\cM_{0,n}$ or $\cW_n$.
\end{remark}

By identifying $\cS(\alpha)$ with $\cT_{0,n}$, we can review the construction of symplectic forms on $\cT_{0,n}$ appearing in Theorem~\ref{thm:main}.
For this purpose, we will follow the approach of~\cite{Deligne_1986mo} mainly.
Compared with the method in~\cite{Thurston_1998sh}, this approach does not relate to the singular Euclidean metrics so directly, whereas it is in the same vein as the construction of the period map, c.f.~\cite{Voisin_2002aa}.
Such a resemblance is crucial for our proof of Theorem~\ref{thm:main}.
As a preparation, we review some facts about the cohomology groups of a local system on $X$ associated with the metric (\ref{eq:sp-metric}).

\subsection{Cohomology groups of a local system on $X$.}
\label{sub:hm}
Let $\sigma_k = \exp (\tpi \alpha_k)$, $k= 0,1,\cdots,n-1$.
The Gauss-Bonnet condition (\ref{eq:gb-cond}) implies $\prod_k \sigma_k = 1$ and (\ref{eq:alpha-range}) implies $\sigma_k \neq 1$.
As a result, we can find a rank one local system (equivalently a complex flat line bundle) $F$ on $X$ such that the monodromy of $F$ around $z_k$ is the multiplication by $\sigma_k$.
Note that if $F'$ is another rank one local system satisfying this property, $F'$ is isomorphic to $F$ although the isomorphism between $F$ and $F'$ is not canonical.

The local system $F$ has a close relation with the metric $g$ given in (\ref{eq:sp-metric}).
Following~\cite[\S~2]{Deligne_1986mo}, we review some facts about $\rH^*(X,F)$, the cohomology groups on $X$ with coefficients in $F$, which are needed for the definition of the complex hyperbolic form.

Firstly, as a result of the combinatorial description of $\rH^*(X,F)$, the Euler characteristics $\chi(X,F)$ is equal to $2-n$.
Moreover, since $\sigma_k \ne 1$, $\rH^*(X,F)$ vanishes except for the first cohomology group, c.f.~\cite[Proposition~(2.3.1)]{Deligne_1986mo}.
Thus, we know that
\begin{equation*}
  \dim \rH^1(X,F) = n - 2.
\end{equation*}
Meanwhile, we can also view $\rH^1(X,F)$ as a smooth de Rham cohomology group, i.e.\ treating elements of $\rH^1(X,F)$ as the de Rham cohomology classes of $1$-forms valued in $F$.
Let $\rH^i_c(X,F)$ be the $F$-valued de Rham cohomology group with compact supports, which is also nonvanishing for $i=1$ alone.
Since $\sigma_k \ne 1$, we can show the following isomorphism, c.f.~\cite[Proposition (2.6.1)]{Deligne_1986mo},
\begin{equation}
  \label{eq:coh-eq}
  \opj:\rH^*_c(X,F) \simarrow \rH^*(X,F).
\end{equation}

Let $\cA^*(X,F)$ be the chain complex of the $F$-valued $\sC^{\infty}$ forms on $X$.
Since the absolute value of $\sigma_k$ is $1$, there is a horizontal, i.e.\ invariant under the monodromy, Hermitian metric $\langle -,-\rangle$ on $F$.
Such a Hermitian structure on $F$ induces a product on $\cA^1(X,F)$ valued in 2-forms, denoted by $\langle -, - \rangle$ again,
\begin{equation*}
  \langle \omega_1\otimes f_1 , \omega_2 \otimes f_2 \rangle \coloneqq \omega_1 \wedge \bar{\omega}_2 \langle f_1, f_2 \rangle.
\end{equation*}
Then, we can define a non-degenerate Hermitian form on $\rH^1_c(X,F)$,
\begin{equation*}
  (u,v) \coloneqq \frac{-1}{\tpi}\int_X \langle u,{v}\rangle,
\end{equation*}
where $u,v\in \rH^1_c(X,F)$.
The above Hermitian form also induces a Hermitian form on $\rH^1(X,F)$ owing to (\ref{eq:coh-eq}).

Fixing a complex structure on $S^2$, following~\cite{Deligne_1986mo}, we write $j^m_*\Omega^*(F)$ for the sheaf over $S^2$ consisting of certain meromorphic $F$-valued forms.
More precisely, an $F$-valued $0$-form (resp.\ $1$-form) $u$ belongs to $\Gamma(S^2,j^m_* \Omega^0(F))$ (resp.\ $\Gamma(S^2,j^m_* \Omega^1(F))$) if and only if
\begin{inparaenum}[(i)]
\item $u$ is holomorphic on $X$;
\item near $z_k$, $k = 1, \cdots,n$, $u$ can be written as $w^{-\alpha_k}f\cdot s$ (resp.\ $w^{-\alpha_k}f\cdot s\diff w$), where $w$ is a local holomorphic coordinate near $z_k$ satisfying $w(z_k) = 0$, $s$ is a nonzero multivalued horizontal section\footnote{Readers can see~\cite[(2.8)]{Deligne_1986mo} for a precise definition of the multivalued horizontal section. Roughly speaking, they can be viewed as the ``global horizontal sections'' on a flat line bundle with the nontrivial monodromy.} of $F$ near $z_k$ and $f$ is a meromorphic function near $z_k$.
\end{inparaenum}
If $u\in \Gamma(S^2,j^m_* \Omega^*(F))$, we define its valuation at $z_k$ to be
\begin{equation}
  \label{eq:def-val}
  v_{z_k}(u) = v_{z_k}(f) - \alpha_k.
\end{equation}

To calculate $\rH^*(X,F)$, we can use meromorphic forms defined in the previous paragraph rather than the usual $\sC^{\infty}$ forms. More precisely, by~\cite[(2.10.2)]{Deligne_1986mo}, we have
\begin{equation*}
  \rH^*(X,F) = \rH^* \big(\Gamma(S^2,j^m_* \Omega^*(F))\big).
\end{equation*}
This isomorphism is particularly useful for calculating the Hermitian form on $\rH^1(X,F)$.
To see this, let $\hoz(X,F)$ be the subspace of $\Gamma(S^2,j^m_* \Omega^1(F))$ consisting of sections whose valuation at $z_k$, $k=1,\cdots,n$, are greater than $-1$ and let $\hzo(X,F) \subseteq \cA^1(X,F)$ be the complex conjugate of $\hoz(X,\bar{F})$.
Note that any element of $\hoz(X,F)$ and $\hzo(X,F)$ is closed, thus defining a de Rham cohomology class.
In fact, we have the following isomorphism similar to the Hodge decomposition, c.f.~\cite[Proposition (2.20)]{Deligne_1986mo},
\begin{equation*}
  \hoz(X,F) \oplus \hzo(X,F) \simarrow \rH^1(X,F) = \rH^1_c(X,F).
\end{equation*}
By direct calculation, we know the following two facts about the above decomposition,
\begin{itemize}
\item $\dim \hoz(X,F) = 1$ and $\dim \hzo(X,F) = n-3$;
\item the restriction of the Hermitian form of $\rH^1(X,F)$ on $\hoz(X,F)$ (resp.\ $\hzo(X,F)$) is positive (negative) definite.
\end{itemize}
At last, we would like to comment that although some constructions in this subsection may work for a wider range of cone angles, we still need (\ref{eq:alpha-range}) to ensure that the cohomology groups satisfy the above two properties, c.f.~\cite[(3.10)]{Deligne_1986mo}.

\subsection{The complex hyperbolic form.}
\label{sub:def-tv}
Recall that we have defined the moduli space $\cW_n$ in Remark~\ref{rk:wn}, a quotient space of $\tnn$.
To define the complex hyperbolic form, we also need a geometric construction associated with $\cW_n$ given in~\cite[\S~3]{Deligne_1986mo}.

Over $\cW_n$, there is a universal family of punctured spheres,
\begin{equation*}
  \rP_{\cW_n} \coloneqq \aset{(p,\bfw)\in \hbC \times \cW_n| p \notin \bfw}.
\end{equation*}
We write $\pi$ for the projection from $\rP_{\cW_n}$ to $\cW_n$.
Let $U$ be a contractible small neighborhood of $\bfw \in \cW_n$.
Then one can find a (not unique) rank one local system $L$ on $\pi^{-1}(U)$ such that for any $\bfu \in U$, $L_{\bfu} \coloneqq L|_{\pi^{-1}(\bfu)}$ is a rank one local system for $\rP_{\bfu} \coloneqq \pi^{-1}(\bfu)$ with the monodromy being $\sigma_i$ at $u_i$.
As usual, $\rR^1\pi_* L$ is a local system on $U$ whose fiber at $\bfu$ is $\rH^1(\rP_{\bfu},L_{\bfu})$.
We can take the projectivization of $\rR^1\pi_* L$ fiberwisely and obtain the space $\bP \rR^1\pi_* L$.
Then $\bP \rR^1\pi_* L$ is independent of the choice of $L$.
As a result, we can glue $\bP \rR^1\pi_* L$ on different $U$ to obtain a global defined flat projective space bundle over $\cW_n$, which is denoted by $\rB(\sigma)$.

A key feature of bundle $\rB(\sigma) \rightarrow \cW_n$ is that it admits a canonically defined holomorphic section $b_{\alpha}$.
On $U$, $b_{\alpha}$ is the projective equivalence class of (the cohomology class of) the following form, c.f.~\cite[Lemma (3.5)]{Deligne_1986mo},
\begin{equation}
  \label{eq:balpha}
  \eta(\bfu) \coloneqq \prod^{n-2}_{k=0} (z - u_k)^{-\alpha_k}\cdot e \diff z,
\end{equation}
where $z$ is the standard holomorphic coordinate on $\hbC$, $e$ is a multivalued horizontal section of $L$, and $\bfu = (u_2,\cdots,u_{n-2})$ is the natural coordinates on $U$ as a subset of $\bC^{n-3}$.

Choose $o = (z_0,z_1,\cdots, z_{n-1})\in \cW_n$ as our base point.
Let $\rB(\sigma)_o$ be the fiber over $o$.
Because $\cT_{0,n}$ is the universal covering of $\cW_n$, the pull-back of $\rB(\sigma)$ onto $\cT_{0,n}$ is isomorphic to $\cT_{0,n} \times \rB(\sigma)_o$.
Taking $\tilde{b}_{\alpha}$ to be the pull-back of $b_{\alpha}$, by~\cite[Proposition (3.9)]{Deligne_1986mo}, we know that $\tilde{b}_{\alpha}: \cT_{0,n} \rightarrow \rB(\sigma)_o$ is a local isomorphism.
As in \S~\ref{sub:hm}, we equip $L$ with a horizontal Hermitian metric, which induces a Hermitian form on $\rH^1(\rP_o, L_o)$ of signature $(1,n-3)$.
Let $\rB^+(\sigma)_o$ be the subset of $\rB(\sigma)_o$ consisting of the positive elements in $\rH^1(\rP_o, L_o)$ with respect to the Hermitian form.
Then, $\rB^+(\sigma)_o$ has a complex hyperbolic structure, c.f.~\cite[Ch. 3]{Goldman_1999co}, and a companion \kah\ form $\omega$.
By the definition of $b_{\alpha}$ and the connectivity of $\tnn$, the range of $\tilde{b}^*_{\alpha}$ is contained in $\rB^+(\sigma)_o$.
Then, we define the \emph{complex hyperbolic form} on $\cT_{0,n}$ to be
\begin{equation*}
  \omega_{\tv} \coloneqq \tilde{b}^*_{\alpha}(\omega).
\end{equation*}
Since $\rB(\sigma)$ is a flat projective space bundle over $\cW_n$, one can check that the above definition is independent of the choice of the base point $o$.

For our later usage, we write $\omega_{\tv}$ in a more concrete form.
As before, we take $\bfu = (u_2,\cdots,u_{n-2})$ to be the global coordinates on $\cW_n$ induced from $\bC^{n-3}$.
Via the covering map, $\aset{u_i}$ also induces local coordinates near any point of of $\tnn$, denoted by the same symbols.
With such local coordinates near $\Xst\in \tnn$, using the local expression of $b_{\alpha}$ and the definition of the \kah\ form on the complex hyperbolic space,~\cite[(3.1.3)]{Goldman_1999co}, $\omega_{\tv}$ has the following local expression,
\begin{multline}
  \label{eq:tv-local}
  -i\omega_{\tv} = \partial_{\bfu}\bar{\partial}_{\bfu}(\log{\int_{X_{\bfu}} \frac{\eta(\bfu) \wedge \overline{\eta(\bfu)}}{-\tpi}}) \\
  = \sum_{2 \le j,k\le n-2}\pdv*{}{u_j,\bar{u}_k}{\Big(\log{(\int_{X_{\bfu}} \prod^{n-2}_{l=0} |z- u_l|^{-2\alpha_l} \dzs)}\Big)} \diff u_j \wedge \diff \bar{u}_k,
\end{multline}
where $X_{\bfu} = \bC - \aset{0,1,u_2,\cdots,u_{n-2}}$.

\begin{remark}
  As we have sketched in the Introduction, each point in $\cS(\alpha) \simeq \tnn$ can be viewed as a polyhedron.
  Using the local coordinate $\bfu$ as in (\ref{eq:tv-local}), we can see that $\int_{X_{\bfu}} \eta(\bfu) \wedge \overline{\eta(\bfu)}$ (up to a constant) is just the area of the polyhedron corresponding to $\bfu$.
  Thus, from (\ref{eq:tv-local}), we know that the logarithm of the area of polyhedra is the \kah\ potential of $\omega_{\tv}$.

  We would also like to add a historic remark here.
  As we have seen, Deligne and Mostow's method leads to a natural definition of the complex hyperbolic form.
  However, it seems that Thurston, in the preprint of~\cite{Thurston_1998sh}, first defined the complex hyperbolic metric on $\tnn$ explicitly in literature.
  Later on, Veech~\cite{Veech_1993fl} generalized Thurston's construction to a broader class.
\end{remark}

\subsection{A variant of the Weil-Petersson form.}
\label{sub:weil-petersson-form}
Now, by imitating the method of defining the usual Weil-Petersson form using the hyperbolic metrics, we construct another natural symplectic form on $\cT_{0,n}$.
We begin with recalling some facts about the \tchm\ space $\cT_{0.n}$.
As before, the base point we choose for $\cT_{0,n}$ is (the equivalence class of) $\Xst$.
By \tchm's lemma, c.f.~\cite[Proposition~6.6.2]{Hubbard_2006te} or~\cite[Lemma 7.6]{Imayoshi_1992in}, the holomorphic cotangent space at $\Xst$ of $\cT_{0,n}$ can be identified with the space of integrable holomorphic quadratic differentials on $\Xst$, which is denoted by $Q(\Xst)$.
In our situation, the following explicit description of $Q(\Xst)$ is convenient.

\begin{lemma}
  \label{lm:quad}
  Let $\varphi$ be a holomorphic function on $\Xst \subseteq \bC$. $\varphi \diff z \otimes \diff z$ is an integrable holomorphic quadratic differential on $\Xst$ if and only if $\varphi$ has the form
  \begin{equation*}
    \varphi = \sum^{n-2}_{k = 0}\frac{\rho_k}{z - z_k},
  \end{equation*}
  with the residues $\rho_k$ satisfying
  \begin{equation*}
    \sum^{n-2}_{k = 0} \rho_k = 0,\; \sum^{n-2}_{k = 0} \rho_kz_k = 0.
  \end{equation*}
\end{lemma}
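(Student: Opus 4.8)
The plan is to convert integrability into a pole-order condition at each of the $n$ punctures, to pin $\varphi$ down by a Liouville argument, and then to extract the two residue relations from the growth at $\infty$.

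First I would recall the standard $L^1$ criterion. The norm of $\varphi\,\diff z\otimes\diff z$ is $\int_{\Xst}\lvert\varphi\rvert\,\tfrac{i}{2}\diff z\wedge\diff\zb$, so near a puncture carrying a local holomorphic coordinate $w$ vanishing there, writing the differential as $f(w)\,\diff w\otimes\diff w$, integrability is equivalent to $\int_{\lvert w\rvert<\epsilon}\lvert f\rvert\,\tfrac{i}{2}\diff w\wedge\diff\bar w<\infty$. If $f$ has a pole of order $m$, then $\lvert f\rvert\sim\lvert w\rvert^{-m}$ and the integral behaves like $\int_0 r^{1-m}\,\diff r$, which converges exactly when $m\le 1$. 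Thus integrability forces at most a simple pole at every puncture. Applied at the finite punctures $z_0,\dots,z_{n-2}$, this shows that $\varphi$, holomorphic on $\Xst$, extends to a meromorphic function on $\bC$ whose only singularities are (at most) simple poles at the $z_k$; write $\rho_k$ for the residue at $z_k$.

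Next I would treat the puncture $z_{n-1}=\infty$ in the coordinate $\zeta=1/z$. Since $\diff z\otimes\diff z=\zeta^{-4}\,\diff\zeta\otimes\diff\zeta$, the local representative of the differential at $\zeta=0$ is $\varphi(1/\zeta)\,\zeta^{-4}$, and the simple-pole criterion requires this to be $\cO(\zeta^{-1})$, that is $\varphi(z)=\cO(z^{-3})$ as $z\to\infty$; the direct $L^1$ computation in $\zeta$ yields the same threshold. I would then determine $\varphi$ explicitly: the difference $h(z)\coloneqq\varphi(z)-\sum_{k=0}^{n-2}\rho_k/(z-z_k)$ has removable singularities at each $z_k$, hence is entire, and since both $\varphi$ and the subtracted sum tend to $0$ at $\infty$, Liouville's theorem gives $h\equiv 0$, i.e.\ $\varphi=\sum_{k=0}^{n-2}\rho_k/(z-z_k)$. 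Expanding each term geometrically, $\varphi(z)=(\sum_k\rho_k)z^{-1}+(\sum_k\rho_k z_k)z^{-2}+\cO(z^{-3})$, so the growth condition $\varphi=\cO(z^{-3})$ is precisely the vanishing of the first two coefficients, namely $\sum_k\rho_k=0$ and $\sum_k\rho_k z_k=0$. The same expansion read backwards supplies the converse, since a $\varphi$ of this form with these relations has only simple poles at the finite $z_k$ and is $\cO(z^{-3})$ at $\infty$, hence is integrable.

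The arguments are all elementary; the one point requiring care is the bookkeeping at $\infty$—correctly transporting the differential under $\zeta=1/z$ so that the pole-order (equivalently $L^1$) threshold lands on exactly the exponent $-3$, and therefore produces exactly the two linear relations on the residues rather than one or three.
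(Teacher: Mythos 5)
Your proof is correct and follows essentially the same route as the paper's: integrability forces at most simple poles at the finite punctures, the coordinate change $\zeta = 1/z$ converts integrability at $\infty$ into the decay $\varphi = \cO(z^{-3})$, Liouville's theorem pins down $\varphi$ as the sum of simple fractions, and the two residue relations are exactly the vanishing of the $z^{-1}$ and $z^{-2}$ coefficients. You spell out the $L^1$ pole-order threshold and the converse more explicitly than the paper does, but the argument is the same.
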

Although in this paper, we fix $z_k$ to be $k$, $k=\aset{0,1,\cdots,n-2}$, we would like to point out that the above lemma is independent of the particular value of $z_k$ actually.
\begin{proof}
  The sufficiency part can be checked by direct calculation.
  We only show the necessity part.

  As $\varphi \diff z \otimes \diff z$ is integrable on $\Xst$, $\varphi$ is a meromorphic function on $\bC$ with at most simple poles at $z_k$, $k = 0,\cdots,n-2$.
  Therefore, taking $\rho_k$ to be the residue of $\varphi$ at $z_k$, we know that $\varphi -\sum^{n-2}_{k = 0}\frac{\rho_k}{z - z_k}$ is a holomorphic function on $\bC$.

  Let $w \coloneqq 1/z$.
  We have
  \begin{equation*}
    \varphi(z) \diff z \otimes \diff z = \frac{1}{w^4} \varphi(1/w) \diff w \otimes \diff w.
  \end{equation*}
  Using the integrability of $\varphi \diff z \otimes \diff z$ on $\Xst$ again, $\frac{1}{w^4}\varphi(1/w)$ has a simple pole at most at $0$.
Then Liouville's theorem implies that $\varphi -\sum^{n-2}_{k = 0}\frac{\rho_k}{z - z_k}$ must be zero.
  And the constraints on $\rho_k$ also follow from the integrability of $\varphi \diff z \otimes \diff z$ near $\infty$.
\end{proof}

From now on, we will identify $Q(\Xst)$ with the subspace of meromorphic functions on $\bC$ consisting of functions appearing in Lemma~\ref{lm:quad}.

Let $g$ be flat metric on $\Xst$ with cone angle $\alpha_k$ at $z_k$, $k = 0,\cdots,n-1$.
As we have said, $g$ must be equal to metric (\ref{eq:sp-metric}) up to a constant.
Therefore, we can write
\begin{equation*}
  g = e^{\gamma} \dzs
\end{equation*}
globally.
Similar to (\ref{eq:hp}), using the identification between the holomorphic cotangent space of $\cT_{0,n}$ at $\Xst$ and $Q(\Xst)$, we can define a Hermitian cometric for $\cT_{0,n}$ using $g$ in the following way
\begin{equation}
  \label{eq:come}
  h^*_{\pw}(\varphi, \psi) \coloneqq \Vol(X,g) \int_X \varphi\bar{\psi} e^{-\gamma} \dzs,
\end{equation}
where $\varphi,\psi \in Q(\Xst)$ and $\Vol(X,g) = \int_X e^{\gamma} \dzs$ is the volume of $g$.
Note that the cone angle assumption (\ref{eq:alpha-range}) guarantees that the integral in (\ref{eq:come}) is finite.

Using the cometric (\ref{eq:come}), we can see that for each $\psi \in Q(\Xst)$, $\bar{\psi} e^{-\gamma} \Vol(X,g)$ corresponds to an element in the dual of ${Q}(\Xst)$.
In other words, we have the following identification as in~\cite{Schumacher_2011we},
\begin{equation}
  \label{eq:t-id}
  \rT_{\Xst} \cT_{0,n} \simeq \aset{\bar{\psi} e^{-\gamma} \Vol(X,g) | \psi\in Q(\Xst)} \eqqcolon V(\Xst,g),
\end{equation}
where and in the following, we use the symbol $\rT_{\bullet}$ (resp.\ $\bar{\rT}_{\bullet}$) to denote the holomorphic (resp.\ anti-holomorphic), or $(1,0)$-type (resp.\ $(0,1)$-type), tangent space of a complex manifold at some point.
The holomorphic (anti-holomorphic) cotangent space is denoted by similar notations.

If $g$ is a complete hyperbolic metric on $X$, a well-known result is that a tangent vector of $\cT_{0,n}$ can be represented by a harmonic Beltrami differential with respect to $g$.
The above identification is an analog of this fact for singular Euclidean metrics.
Especially, one can check that an element in $V(\Xst,g)$ is also harmonic with respect to $g$.
However, compared with the classical case, there is a serious drawback for elements of $V(\Xst,g)$ from the analytical viewpoint.
Namely, in the classical case, a harmonic Beltrami differential lies in $\rL^{\infty}(\Xst, \rT \Xst \otimes \bar{\rT}^* \Xst)$, while in our case, an element of $V(\Xst, g)$ (also viewed as a section of $\rT \Xst \otimes \bar{\rT}^* \Xst$) is not $\rL^{\infty}$ in general by checking the singularities of $g$.\footnote{If $n=4$ and $\alpha_k = 1/2$ for $k = 0,1,2,3$, the element of $V(\Xst,g)$ is $\rL^{\infty}$ indeed. However, this special case is rather exceptional in many facets.}

Using (\ref{eq:t-id}), the cometric (\ref{eq:come}) induces a Hermitian metric on $\rT_{\Xst} \cT_{0,n}$,
\begin{equation}
  \label{eq:wp-me}
  h_{\pw}(\mu,\nu) \coloneqq \frac{1}{\tpi} \frac{\int_X \mu\bar{\nu} e^{\gamma} \diff z \wedge \diff \zb}{\Vol(X,g)},
\end{equation}
where $\mu,\nu \in V(\Xst,g) \simeq \rT_{\Xst} \cT_{0,n}$.
$h_{\pw}$ is a variant of the classical Weil-Petersson metric on $\cT_{0,n}$.
But for the sake of simplicity, we call the $2$-form associated with $h_{\pw}$ the \emph{Weil-Petersson form} (defined by $g$) directly and denote it by $\omega_{\pw}$.

A natural question about $\omega_{\pw}$ is whether it is a closed form, or equivalently, whether the Hermitian metric (\ref{eq:wp-me}) is K\"ahler.
If $g$ is a hyperbolic metric with conic singularities, Takhtajan and Zograf,~\cite{Takhtajan_2003hy}, have shown that a metric defined in the same way as (\ref{eq:wp-me}) is a \kah\ metric, also c.f.~\cite{Schumacher_2011we}.
In our case, we will deduce the closedness of $\omega_{\pw}$ from Theorem~\ref{thm:main}, i.e.\ Corollary~\ref{cor:main}.

\section{Some analytic results}
\label{sec:some-ana-res}
\subsection{}
In this section, we are going to show some analytic results used in the proof of Theorem~\ref{thm:main}.
These results are mainly about the properties of an integral operator associated with the Beltrami equation.

In the rest of the paper, we denote $\Delta_{\epsilon} \coloneqq \aset{z \in \bC| |z| < \epsilon}$ and $\Delta = \Delta_1$.

\subsection{The Beltrami equation.}
\label{sub:prep}
As a preparation, we review some facts about the Beltrami equation, i.e.\ the following equation,
\begin{equation}
  \label{eq:bel}
  \begin{cases}
    \pdv{w}{\zb} = \mu(z) \pdv{w}{z}, \\
    w(0) = 0,\; w(1) = 1,\; w(\infty) = \infty,
  \end{cases}
\end{equation}
in which $\mu \in \rL^{\infty}(\bC)$.
We need the following well-known properties of the Beltrami equation.
The readers can find a proof in~\cite[Ch.~4]{Imayoshi_1992in}.
\begin{enumerate}[label=\Alph*.]
\item If $\esup_{z\in\bC} |\mu(z)| < 1$, then there is a unique solution $w(z)$ to equation (\ref{eq:bel}).
  Moreover, such a solution $w(z)$ is a quasiconformal homeomorphism on $\hbC$.
\item Let $s\in \bC$ be a parameter with a small enough norm and $w_s$ be the solution of (\ref{eq:bel}) with $s\mu$ substituting $\mu$.
  Then for a fixed $z$, $w_s(z)$ is a holomorphic function of $s$ and the tangent vector induced by the derivative with respect to $s$ is
  \begin{equation}
    \label{eq:bel-diff}
    \pdv{w_s}{s}\Big|_{s = 0}(z) = \Big(\frac{1}{\tpi} \int_{\bC} \mu(\zeta) R(\zeta,z) \diff \zeta \wedge \diff \bar{\zeta}\Big) \frac{\partial}{\partial z},\; w_0(z) = z,
  \end{equation}
  where
  \begin{equation*}
    R(\zeta,z) \coloneqq \frac{z(z-1)}{\zeta(\zeta-1)(\zeta-z)} = \frac{1}{\zeta-z} + \frac{z-1}{\zeta} - \frac{z}{\zeta -1}.
  \end{equation*}
\item Let $\rL^{\infty}_1(\bC) \coloneqq \aset{\mu\in \rL^{\infty}(\bC) | \esup_{z\in \bC} |\mu(z)| < 1}$.
  As before, we choose $\Xst$ as the base point of $\cT_{0,n}$.
  Any point of $\cT_{0,n}$ can be represented by a solution of (\ref{eq:bel}) with respect to some element of $\rL^{\infty}_1(\bC)$.
  Indeed, any point of $\cT_{0,n}$ can be represented by a smooth solution of (\ref{eq:bel}) with a smooth $\mu$, see~\cite[Theorem~3]{Earle_1967re}.
\item Let $w_1,w_2$ be the solutions of (\ref{eq:bel}) with respect to $\mu_1,\mu_2 \in \rL^{\infty}_1(\bC)$ respectively.
  Then $w_1$ and $w_2$ represent the same point in $\cT_{0,n}$ if and only if $w_1$ is homotopic to $w_2$ through quasiconformal homeomorphisms relative to $z_k$, $k=0,\cdots,n-1$.
\end{enumerate}

\subsection{Properties of an integral operator.}
For our later usage, we need to investigate the property of the integral operator appearing in (\ref{eq:bel-diff}).
We begin with the following lemma.
\begin{lemma}
  \label{lm:int}
  For every $p$ with $2 < p < \infty$ and for every $h(z) \in \rL^p(\bC)$, the function $H(z)$ defined by
  \begin{equation}
    \label{eq:g-def}
    H(z) \coloneqq \frac{1}{\tpi} \int_{\bC} h(\zeta) R(\zeta,z) \diff \zeta \wedge \diff \bar{\zeta}
  \end{equation}
  is a uniformly H\"older continuous function on $\bC$, with exponent $1-2/p$ and satisfies
  \begin{equation}
    \label{eq:pb-cond}
    H(0) = 0,\; H(1) = 0,\; H(z) = \cO(|z|)\text{ as }z \rightarrow \infty.
  \end{equation}

  Moreover, $H(z)$ has $\rL^p_{\loc}$ distributional derivatives.
  In fact, $H(z)$ is the unique solution to the $\bar{\partial}$-equation for $h(z)$ satisfying the condition (\ref{eq:pb-cond}), i.e.\
  \begin{equation*}
    \pdv{H}{\zb} = h.
  \end{equation*}
\end{lemma}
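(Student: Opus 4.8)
The plan is to reduce the statement to the classical theory of the Cauchy (Pompeiu) transform $\cC h(z) \coloneqq \frac{1}{\tpi}\int_{\bC}\frac{h(\zeta)}{\zeta-z}\,\diff\zeta\wedge\diff\bar\zeta$, whose relevant properties are standard for compactly supported densities, c.f.\ \cite[Ch.~4]{Imayoshi_1992in}. Indeed, the first term of $R$ produces exactly $\cC h$, which already enjoys H\"older continuity of exponent $1-2/p$, has $\rL^p_{\loc}$ derivatives, and satisfies $\partial_{\zb}\cC h = h$ \emph{when $h$ has compact support}. The two correction terms $\tfrac{z-1}{\zeta}-\tfrac{z}{\zeta-1}$ serve two purposes: they improve the decay of the full kernel to $R(\zeta,z)=\cO(|\zeta|^{-3})$ as $\zeta\to\infty$, which is exactly what makes the integral converge for a general, non-compactly supported $h\in\rL^p(\bC)$, and they enforce the normalization \eqref{eq:pb-cond}. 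The whole proof is then the transfer of the compactly supported theory to general $\rL^p$ densities via this improved decay.

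First I would check that $H$ is well defined and verify \eqref{eq:pb-cond}. Writing $p'$ for the conjugate exponent ($\tfrac1p+\tfrac1{p'}=1$), since $p>2$ we have $p'<2$, so the simple poles of $R$ at $\zeta=0,1,z$ are locally $\rL^{p'}$, while the decay $R(\zeta,z)=\cO(|\zeta|^{-3})$ puts $R(\cdot,z)\in\rL^{p'}(\bC)$; H\"older's inequality then yields the finite pointwise bound $|H(z)|\le\|h\|_{\rL^p}\,\|R(\cdot,z)\|_{\rL^{p'}}$. For \eqref{eq:pb-cond}, the factored form $R(\zeta,z)=\frac{z(z-1)}{\zeta(\zeta-1)(\zeta-z)}$ makes the numerator vanish at $z=0$ and $z=1$, giving $H(0)=H(1)=0$; and rewriting $R(\zeta,z)=z\bigl(\frac{1}{\zeta(\zeta-z)}-\frac{1}{\zeta(\zeta-1)}\bigr)$ with a bound uniform in $z$ gives $H(z)=\cO(|z|)$ as $z\to\infty$.

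For the analytic statements I would argue locally. Fix $z_0$, choose disks $D\ni z_0$ and $D'\supset\bar D$, and split $h=h_1+h_2$ with $h_1\coloneqq h\,\chi_{D'}$ and $h_2\coloneqq h-h_1$. Since $h_1$ is compactly supported, the three-term decomposition of $R$ is legitimate:
\[
  H_1(z)=\cC h_1(z)+(z-1)\cdot\frac{1}{\tpi}\int_{\bC}\frac{h_1(\zeta)}{\zeta}\,\diff\zeta\wedge\diff\bar\zeta-z\cdot\frac{1}{\tpi}\int_{\bC}\frac{h_1(\zeta)}{\zeta-1}\,\diff\zeta\wedge\diff\bar\zeta,
\]
and the last two integrals are finite constants, so these terms are affine in $z$, hence smooth with $\partial_{\zb}=0$. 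The classical properties of $\cC h_1$ then give that $H_1$ is H\"older of exponent $1-2/p$, has $\rL^p_{\loc}$ derivatives, and satisfies $\partial_{\zb}H_1=h_1=h$ on $D$. For $H_2$, the density is supported off $D'$, so $R(\zeta,z)$ is holomorphic in $z\in D$ for $\zeta\in\supp h_2$; since $R$ and its $z$-derivatives decay at infinity (e.g.\ $\partial_z R=\cO(|\zeta|^{-2})$), differentiation under the integral sign is justified and shows $H_2$ is holomorphic on $D$, so $\partial_{\zb}H_2=0$ there. Adding the pieces gives $\partial_{\zb}H=h$ on $D$, local H\"older continuity, and the $\rL^p_{\loc}$ derivative property. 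For the \emph{uniform} H\"older bound I would instead estimate the kernel difference directly, $|H(z_1)-H(z_2)|\le\|h\|_{\rL^p}\,\|R(\cdot,z_1)-R(\cdot,z_2)\|_{\rL^{p'}}$, and show the $\rL^{p'}$ norm is $\cO(|z_1-z_2|^{1-2/p})$ by the same domain splitting used in the classical Cauchy-transform estimate.

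Uniqueness follows from a Liouville argument: if $H$ and $\tilde H$ both satisfy $\partial_{\zb}u=h$ together with \eqref{eq:pb-cond}, then $u\coloneqq H-\tilde H$ is a distributional solution of $\partial_{\zb}u=0$ on $\bC$, hence entire by Weyl's lemma; the growth $u(z)=\cO(|z|)$ forces $u$ to be affine, and $u(0)=u(1)=0$ then give $u\equiv0$. The step I expect to be the main obstacle is precisely this transfer from compactly supported densities to general $h\in\rL^p(\bC)$: one must make rigorous that the tail term contributes only a holomorphic (hence harmless) piece and that the sharp H\"older exponent $1-2/p$ survives globally and uniformly. Controlling the interaction of the correction terms with the Cauchy kernel near the poles $\zeta=0,1$ while keeping the estimates uniform is the delicate point, and it is exactly the feature for which the special form of $R$ is designed.
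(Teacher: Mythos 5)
Your argument is correct, but it packages the reduction to classical Cauchy--transform theory differently from the paper. The paper uses the single global identity $R(\zeta,z)=\bigl(\tfrac{1}{\zeta-z}-\tfrac{1}{\zeta}\bigr)+z\bigl(\tfrac{1}{\zeta}-\tfrac{1}{\zeta-1}\bigr)$, i.e.\ $H(z)=\opP(h)(z)+Kz$ with $K=\tfrac{1}{\tpi}\int_{\bC}h(\zeta)\bigl(\tfrac{1}{\zeta}-\tfrac{1}{\zeta-1}\bigr)\diff\zeta\wedge\diff\bar\zeta$, where $\opP$ is precisely the normalized Cauchy transform treated in \cite[Lemma~4.20, Proposition~4.23]{Imayoshi_1992in} for arbitrary $h\in\rL^p(\bC)$, $p>2$ (the $-1/\zeta$ correction already makes that kernel $\cO(|\zeta|^{-2})$, hence $\rL^{p'}$ at infinity); every assertion of the lemma then follows at once from the two cited results plus the triviality of the linear term. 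You instead keep the raw Cauchy kernel, which forces a compactly-supported/tail splitting for the $\bar\partial$-equation and the $\rL^p_{\loc}$ derivatives, and a separate kernel-difference estimate $\lno R(\cdot,z_1)-R(\cdot,z_2)\rno_{\rL^{p'}}=\cO(|z_1-z_2|^{1-2/p})+\cO(|z_1-z_2|)$ for the H\"older bound; your scaling computations (e.g.\ $\lno\tfrac{1}{\zeta(\zeta-z)}\rno_{\rL^{p'}}=C|z|^{-2/p}$) are the correct way to make these quantitative, and your Liouville uniqueness argument is the standard one. What the paper's route buys is brevity --- the whole lemma becomes a two-line corollary of the reference --- while yours is more self-contained and makes visible exactly which decay of $R$ is responsible for which conclusion. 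One small point of precision common to both proofs: the linear piece ($Kz$ in the paper, the $\cO(|z_1-z_2|)$ term in your estimate) means the exponent-$(1-2/p)$ bound is uniform only for $|z_1-z_2|$ bounded, which is the intended reading of ``uniformly H\"older'' here; and your phrase ``a bound uniform in $z$'' for the $\cO(|z|)$ asymptotics should be the computation $\lno\tfrac{1}{\zeta(\zeta-z)}\rno_{\rL^{p'}}=\cO(|z|^{-2/p})$ rather than literal uniformity, exactly as in the paper's estimate $H(z)=\cO(|z|^{1-2/p})+\cO(|z|)$.
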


\begin{proof}
  Define $\opP$ to be the following integral operator,
  \begin{equation*}
    \label{eq:def-opp}
    \opP(h) \coloneqq \frac{1}{\tpi} \int_{\bC} (\frac{1}{\zeta-z} - \frac{1}{\zeta})h(\zeta) \diff \zeta \wedge \diff \bar{\zeta}.
  \end{equation*}
  Using $\opP$, we can write $H$ as
  \begin{equation}
    \label{eq:decomp-H}
    H(z) = \opP(h)(z) + Kz,
  \end{equation}
  where $K$ is a constant depending on $h$.
  With such a decomposition, Lemma~\ref{lm:int} is a direct consequence of~\cite[Lemma~4.20 \& Proposition~4.23]{Imayoshi_1992in}.
As an example, we show the asymptotic property of $H(z)$ near the infinity.
  By~\cite[(4.10)]{Imayoshi_1992in}, there is a constant $K_p$ depending only on $p$ such that
  \begin{equation*}
    |\opP(h)(z)| \le K_p \norm{h}_p \cdot |z|^{1-2/p}.
  \end{equation*}
  By (\ref{eq:decomp-H}), the above estimate implies
  \begin{equation*}
    H(z) = \cO(|z|^{1-2/p}) + \cO(|z|) = \cO(|z|)\text{ as }z\rightarrow \infty.
  \end{equation*}
\end{proof}

To apply Lemma~\ref{lm:int}, we have to assume that $h(z)\in \rL^p(\bC)$, which does not always hold in some interesting cases.
For example, in (\ref{eq:bel-diff}), $\mu$ is only an $\rL^{\infty}$ function.
Nevertheless, we can generalize Lemma~\ref{lm:int} in the following way to weaken the assumption on the integrability.
\begin{lemma}
  \label{lm:int2}
  For every $p$ with $2 < p < \infty$ and for every $h(z)$ satisfying $h(z) \in \rL^p_{\loc}(\bC)$ and $h(1/z) \in \rL^p_{\loc}(\bC)$, the function $H(z)$ defined by (\ref{eq:g-def}) is still well defined.
  Furthermore, $H(z)$ satisfies all the properties in Lemma~\ref{lm:int} except that $H(z)$ is only locally $(1-2/p)$-H\"older continuous and has $\cO(|z|^{1+2/p})$ asymptotics near the infinity.
\end{lemma}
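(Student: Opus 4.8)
\emph{Strategy.} The plan is to reduce everything to Lemma~\ref{lm:int} on a large disk and to treat the neighbourhood of $\infty$ through the inversion $\zeta\mapsto 1/\zeta$, which is exactly what the hypothesis $h(1/z)\in\rL^p_{\loc}(\bC)$ is designed to feed. Throughout I would exploit the factorisation
\begin{equation*}
  R(\zeta,z)=z(z-1)\cdot\frac{1}{\zeta(\zeta-1)(\zeta-z)},\qquad H(z)=z(z-1)\,I(z),\quad I(z)\coloneqq\frac{1}{\tpi}\int_{\bC}\frac{h(\zeta)}{\zeta(\zeta-1)(\zeta-z)}\diff\zeta\wedge\diff\bar{\zeta}.
\end{equation*}
Writing $\mathrm{d}A$ for the Lebesgue measure and $p'=p/(p-1)<2$, the elementary engine of the whole argument is the pair of bounds
\begin{equation*}
  \int_{1\le|\zeta|\le T}\frac{|h(\zeta)|}{|\zeta|^2}\,\mathrm{d}A=\cO(T^{2/p}),\qquad \int_{|\zeta|\ge T}\frac{|h(\zeta)|}{|\zeta|^3}\,\mathrm{d}A=\cO(T^{2/p-1}),
\end{equation*}
which follow from the substitution $\zeta=1/s$ and H\"older's inequality, the truncated norm $\bigl\||s|^{-2}\bigr\|_{\rL^{p'}(1/T\le|s|\le1)}$ being of order $T^{2/p}$ precisely because $p'<2$. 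Since $R(\zeta,z)=\cO(|\zeta|^{-3})$ for fixed $z$, the second bound already shows that the defining integral converges absolutely, so $H$ is well defined.

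\emph{Soft properties and local regularity.} Because the prefactor $z(z-1)$ makes $R(\zeta,0)=R(\zeta,1)=0$, the normalisations $H(0)=H(1)=0$ come for free. The equation $\pdv{H}{\zb}=h$ and the $\rL^p_{\loc}$ regularity of the derivatives are local statements: on a compact set $K$ I would split $h=h\chi_{\Delta_R}+h\chi_{\bC\setminus\Delta_R}$ with $\Delta_R\supset K$. The first summand lies in $\rL^p(\bC)$, so Lemma~\ref{lm:int} applies to it verbatim; the second summand has a kernel that is holomorphic in $z$ on $K$, and since $\partial_z^kR(\zeta,z)=\cO(|\zeta|^{-3})$ the tail bound lets me differentiate under the integral, so this piece is smooth and holomorphic on $K$. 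Adding the two gives $\pdv{H}{\zb}=h$ on $K$, hence on $\bC$. The same splitting yields local $(1-2/p)$-H\"older continuity: the $\Delta_R$-part is uniformly H\"older by Lemma~\ref{lm:int}, and the remainder is smooth on $K$.

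\emph{Growth at infinity.} The exponent degrades from $1$ to $1+2/p$ here, and I expect this to be the main obstacle. For large $|z|$ I would estimate $I(z)$ by cutting the $\zeta$-plane into $\{|\zeta|\le|z|/2\}$, $\{|\zeta-z|\le|z|/2\}$ and the complement. On the first region $1/|\zeta-z|\lesssim 1/|z|$ and the first engine bound gives $\int|h|/|\zeta|^2\,\mathrm{d}A\lesssim|z|^{2/p}$, so the contribution is $\cO(|z|^{2/p-1})$. On the second region $1/(|\zeta|\,|\zeta-1|)\lesssim|z|^{-2}$, while the local factor $1/|\zeta-z|$, transported to the coordinate $s=1/\zeta$ (with $z=1/t$ and $h(1/\cdot)\in\rL^p$ near $0$), concentrates at $|s|$ of size $|t|$, and a scaling H\"older estimate gives $\int_{|\zeta-z|\le|z|/2}|h|/|\zeta-z|\,\mathrm{d}A\lesssim|z|^{1+2/p}$; the contribution is again $\cO(|z|^{2/p-1})$. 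The complement is dispatched by the tail bound. Thus $I(z)=\cO(|z|^{2/p-1})$ and $H(z)=z(z-1)I(z)=\cO(|z|^{1+2/p})$. The delicate point is the interaction of the three scales (near $0,1$, near $z$, and near $\infty$) together with the fact that $\int|h|/|\zeta|^2$ is no longer globally finite but grows like a power of the radius.

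\emph{Uniqueness.} If $H_1$ and $H_2$ both solve the $\bar\partial$-equation for $h$ with $H_i(0)=H_i(1)=0$ and $H_i=\cO(|z|^{1+2/p})$, then $\partial_{\zb}(H_1-H_2)=0$ with $\rL^p_{\loc}$ derivatives, so by Weyl's lemma $H_1-H_2$ is entire; it vanishes at $0$ and $1$ and grows like $\cO(|z|^{1+2/p})$ with $1+2/p<2$. The generalised Liouville theorem then forces it to be a polynomial of degree at most $1$, hence identically $0$. As with the convergence of the defining integral, this last step uses $p>2$ in an essential way.
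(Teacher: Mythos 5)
Your proof is correct and proves every assertion of the lemma, but it treats the part of $h$ living near infinity by a genuinely different mechanism than the paper, so a comparison is worth recording. Both arguments split $h$ into a compactly supported piece, to which Lemma~\ref{lm:int} applies verbatim, and a tail supported outside a disk. The paper performs the inversion $\xi=1/\zeta$ \emph{inside the tail integral} once and for all, rewriting it as $\frac{-z^2}{\tpi}\int_{\Delta}g(\xi)R(\xi,1/z)\diff\xi\wedge\diff\bar{\xi}$ with $g(\xi)=h(1/\xi)\xi^2/\bar{\xi}^2\in\rL^p(\Delta)$, and then reads everything off from Lemma~\ref{lm:int} applied to $g$ at the point $1/z$: the $\cO(|z|^{1+2/p})$ growth comes from the uniform $(1-2/p)$-H\"older continuity of $\opP(g)$ between $1/z$ and $0$, multiplied by $|z|^2$, while the behaviour near $z=0$ requires a separate Lipschitz estimate and an explicit computation showing the tail term is holomorphic there. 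You never form this transformed integral; instead you note that the tail kernel is holomorphic in $z$ on any compact set, so differentiation under the integral sign --- justified by your bound $\int_{|\zeta|\ge T}|h|/|\zeta|^3=\cO(T^{2/p-1})$ --- gives smoothness of the tail contribution on compacts with no special argument at the origin, and you obtain the growth at infinity by a direct three-region estimate of the kernel, with the inversion entering only through your two ``engine'' bounds. The one step that genuinely needs care is the near-diagonal region, where transporting $1/|\zeta-z|$ to $s=1/\zeta$, $z=1/t$ turns the integral into $|t|\int|h(1/s)|\,|s|^{-3}|s-t|^{-1}$ over $|s-t|\lesssim|t|$, and H\"older with exponent $p'<2$ gives exactly $\cO(|z|^{1+2/p})$; this checks out. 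What the paper's route buys is slightly more regularity at the origin (Lipschitz continuity and holomorphy of the tail term) essentially for free from Lemma~\ref{lm:int}; what yours buys is a more self-contained growth estimate and a uniform treatment on compacts. Your uniqueness step (an entire difference of growth $\cO(|z|^{1+2/p})$ with $1+2/p<2$ is an affine function, killed by the two normalisations at $0$ and $1$) is also fine; the only cosmetic omission is that absolute convergence of the defining integral near the simple poles of $R$ at $\zeta=0,1,z$ should be mentioned, though it is immediate from $h\in\rL^p_{\loc}$ and $p'<2$.
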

\begin{proof}
  We first check that $H$ is a well-defined locally H\"older continuous function and satisfies (\ref{eq:pb-cond}).

  Let $\chi_{\Delta}$ be the characteristic function of the open unit disk $\Delta$.
  We denote $h_1,h_2$ to be the function $h\chi_{\Delta},h(1-\chi_{\Delta})$ respectively.
  Therefore, we can write $H = H_1 + H_2$, where
  \begin{equation*}
    H_1(z) \coloneqq \frac{1}{\tpi} \int_{\Delta} h_1(\zeta) R(\zeta,z) \diff \zeta \wedge \diff \bar{\zeta},\; H_2(z) \coloneqq \frac{1}{\tpi} \int_{\bC- \Delta} h_2(\zeta) R(\zeta,z) \diff \zeta \wedge \diff \bar{\zeta}.
  \end{equation*}
  Due to Lemma~\ref{lm:int}, we know that $H_1(z)$ satisfies all the required properties.
  Thus, we only need to deal with $H_2(z)$.

  By changing variable, $\xi = 1/\zeta$, we have
  \begin{equation}
    \label{eq:h22}
    H_2(z) = \frac{-z^2}{\tpi} \int_{\Delta} h_2(1/\xi) \frac{\xi^2}{\bar{\xi}^2} R(\xi,1/z)  \diff \xi \wedge \diff \bar{\xi},\quad\text{for }z \neq 0,
  \end{equation}
  and $H_2(0) = 0$.
  Set $g(\xi) \coloneqq h_2(1/\xi) \xi^2/\bar{\xi}^2$.
  Then $g \in \rL^p(\Delta)$ due to the assumption on $h$.
  As a result, we can apply Lemma~\ref{lm:int} to $g$ and conclude that $H_2(z)$ is locally $(1-2/p)-$H\"older continuous on $\bC - \aset{0}$.
At the same time, for $z$ in a small neighborhood of the origin, we have
  \begin{equation}
    \label{eq:int2-1}
    |H_2(z) - H_2(0)| = |H_2(z)| \le C_1 |z|^2 \cdot \frac{1}{|z|} = C_1|z|,
  \end{equation}
  which implies that $H_2(z)$ is a continuous function on $\bC$.
  Here and in the following, the estimate constants $C_i$ may depend on $\norm{g}_{\rL^p(\Delta)}$.

  The next step is to show that $H_2$ is also H\"older continuous near a neighborhood of the origin and has the desired asymptotic property.
  We need some calculations.
Choose $w_m$ with $0< |w_m| \le \epsilon < 1$, $m = 1,2$.
Therefore, we have
  \begin{equation}
    \label{eq:est-r1}
    \begin{aligned}
      & \Big|\int_{\Delta} g(\xi) \frac{w_1}{\xi - w_1^{-1}} \dxs - \int_{\Delta} g(\xi) \frac{w_2}{\xi - w_2^{-1}} \dxs\Big| \\
      \le & |w_1 - w_2| \int_{\Delta} |g(\xi)| \frac{|w_1w_2\xi - w_1 - w_2|}{(1 - |w_1\xi|) (1- |w_2\xi|)} \dxs \\
      \le & |w_1 - w_2| \int_{\Delta} |g(\xi)| \frac{\epsilon^2|\xi| + 2\epsilon}{(1- \epsilon |\xi|)^2} \dxs \le C_2 |w_1 - w_2|.
    \end{aligned}
  \end{equation}
  The calculation like (\ref{eq:est-r1}) also holds for the other two terms in $R(\xi,1/z)$.
  Hence,
  \begin{equation}
    \label{eq:est-r}
    \Big|w_1\int_{\Delta} g(\xi) R(\xi,1/w_1)  \dxs - w_2\int_{\Delta} g(\xi) R(\xi,1/w_2)  \dxs\Big| \le C_3 |w_1 - w_2|.
  \end{equation}
  Meanwhile, by Lemma~\ref{lm:int}, for $0< |z| \le \epsilon$, we have
  \begin{equation}
    \label{eq:est-r2}
    \Big|z\int_{\Delta} g(\xi) R(\xi,1/z)  \dxs\Big| \le C_4.
  \end{equation}
  By (\ref{eq:est-r}) and (\ref{eq:est-r2}),
  \begin{equation}
    \label{eq:int2-2}
    |H_2(w_1) - H_2(w_2)| \le (\epsilon C_3 + C_4)|w_1 - w_2|.
  \end{equation}
  Combining (\ref{eq:int2-1}) and (\ref{eq:int2-2}), we know that $H_2(z)$ is Lipschitz continuous, H\"older continuous especially, near the origin.

  Once $H(z)$ is well defined, it is trivial to see that $H(z)$ takes the desired values at $0,1$.
  To show that $H(z)$ has the desired asymptotic property, we also only need to show that the same result holds for $H_2(z)$.
  For this purpose, we notice the following estimate.
  Due to ($1-2/p$)-H\"older continuity in Lemma~\ref{lm:int} or~\cite[Lemma~4.20]{Imayoshi_1992in}, there exists a constant $C_5$ such that
  \begin{equation*}
    \Big| z^2 \int_{\Delta} g(\xi) (\frac{1}{\xi - z^{-1}} - \frac{1}{\xi}) \dxs \Big| \le |z|^2 \cdot C_5 |z^{-1} - 0|^{1- 2/p} = C_5 |z|^{1+ 2/p}.
\end{equation*}
Meanwhile, by direct calculation,
  \begin{equation*}
    \Big| z \int_{\Delta} g(\xi) (\frac{1}{\xi} - \frac{1}{\xi-1}) \dxs \Big| \le C_6 |z|.
  \end{equation*}
  Using the definition of $R$ again, the above two estimates imply that $H_2(z) = \cO(|z|^{1+2/p})$ as $z \rightarrow \infty$.

  Now, we show that $H(z)$ has the $\rL^p_{\loc}$ distributional derivatives.
  Again, we only need to show this property for $H_2(z)$.
  Near any $z \neq 0$, taking a small neighborhood $U$ of $z$, Lemma~\ref{lm:int} implies that
  \begin{equation}
    \label{eq:zb-g}
    \pdv*{}{\zb}{\Big(\int_{\Delta} g(\xi)R(\xi,1/z)  \dxs\Big)} =- \frac{1}{\zb^2} g(1/z)
  \end{equation}
  holds as distributions over $\sC^{\infty}_c(U)$, where we define $g(\xi) \equiv 0$ for $\xi \in \bC - \Delta$ for r.h.s.\ of the equality.
  By (\ref{eq:h22}) and (\ref{eq:zb-g}), we know that on $\bC - \aset{0}$, the distributional $\bar{\partial}$ derivative of $H_2(z)$ is $\rL^p_{\loc}$.
  Using the same argument, we can also show that on $\bC - \aset{0}$, the distributional $\partial$ derivative of $H_2(z)$ is $\rL^p_{\loc}$, too.

  On the other hand, using (\ref{eq:h22}), we can write $H_2(z)$ as
  \begin{equation*}
    H_2(z) = \int_{\Delta} g(\xi) \Bigl(\frac{z^3}{z\xi - 1} - \frac{z^2}{\xi}\Bigr) \dxs + z\int_{\Delta} g(\xi)\Big(\frac{1}{\xi} - \frac{1}{\xi-1}\Big) \dxs.
  \end{equation*}
  Using the above expression, we find that if $|z| \le \epsilon < 1$, $H(z)$ has the classical derivatives and
  \begin{equation}
    \label{eq:ph2}
    \pdv{H_2}{\zb} = \int_{\Delta} g(\xi) \pdv*{}{\zb}{\Big(\frac{z^3}{z\xi - 1} - \frac{z^2}{\xi}\Big)}\dxs = 0.
  \end{equation}
  As a result, Weyl's lemma implies that $H_2(z)$ is smooth (holomorphic actually) near the origin, where the derivatives are $\rL^p_{\loc}$ certainly.

  Note that the calculation similar to (\ref{eq:ph2}) also implies that (\ref{eq:zb-g}) holds as distributions over $\sC^{\infty}_c(\bC)$.
  Then, using (\ref{eq:zb-g}), we can check that $H(z)$ satisfies the $\bar{\partial}$-equation for $h(z)$ by a direct calculation.
\end{proof}

As an application of the above two lemmas, we discuss a property about the elements in $V(\Xst,g)$.

\begin{proposition}
  \label{prop:two-cl}
  Let $a(z) \in V(\Xst,g)$.
  \begin{enumerate}[label=(\arabic*)]
  \item\label{prop:tc1} For any $p> 2$ satisfying
    \begin{equation}
      \label{eq:p-cond}
      p(1-2\alpha_k) < 2,\;k=0,\cdots,n-1,
    \end{equation}
    $a(z) \in \rL^p_{\loc}(\bC)$ and $a(1/z) \in \rL^p_{\loc}(\bC)$.
  \item Due to the integrability of $a(z)$, we can define
    \begin{equation*}
      Y(z) \coloneqq \frac{1}{\tpi} \int_{\bC} a(\zeta) R(\zeta,z) \diff \zeta \wedge \diff \bar{\zeta}.
    \end{equation*}
    Then $Y(z)$ is a continuous function on $\bC$ and smooth away from $z_k$, $k = 0, \cdots, n-2$.
  \item Let $k = 0, \cdots, n-2$.
    If $0< \alpha_k < 1/2$, near $z_k$, $Y(z)$ is $(2\alpha_k - \varepsilon)$-H\"older continuous for any $\varepsilon>0$ sufficiently small.
    If $1/2\le \alpha_k < 1$, near $z_k$, $Y(z)$ is $(1 - \varepsilon)$-H\"older continuous for any $\varepsilon>0$ sufficiently small.
  \end{enumerate}
\end{proposition}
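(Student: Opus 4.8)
The plan is to reduce all three assertions to the local behaviour of $a$ near the marked points and at infinity, and then to invoke Lemmas~\ref{lm:int} and~\ref{lm:int2}. First I would make the singularities of $a$ explicit. By (\ref{eq:sp-metric}) we have $e^{-\gamma}=\prod_{i=0}^{n-2}|z-z_i|^{2\alpha_i}$, and by the definition of $V(\Xst,g)$ every $a$ has the form $\bar\psi\,e^{-\gamma}\Vol(X,g)$ with $\psi=\sum_{k=0}^{n-2}\rho_k/(z-z_k)\in Q(\Xst)$. Reading off leading terms gives $a(z)=\cO(|z-z_k|^{2\alpha_k-1})$ as $z\to z_k$ for $k=0,\dots,n-2$. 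For the behaviour at infinity I would use the two residue constraints from Lemma~\ref{lm:quad}, namely $\sum_k\rho_k=0$ and $\sum_k\rho_kz_k=0$, which force $\psi(z)=\cO(|z|^{-3})$; combined with $e^{-\gamma}\sim|z|^{2\sum_{i=0}^{n-2}\alpha_i}=|z|^{2(2-\alpha_{n-1})}$, the last equality being the Gauss--Bonnet condition (\ref{eq:gb-cond}), this yields $a(z)=\cO(|z|^{1-2\alpha_{n-1}})$, equivalently $a(1/z)=\cO(|z|^{2\alpha_{n-1}-1})$ near the origin.

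The first assertion is then a direct computation. Near $z_k$ the density $|a|^p$ is $\cO(|z-z_k|^{p(2\alpha_k-1)})$, which is integrable over a two-dimensional neighbourhood precisely when $p(2\alpha_k-1)>-2$, that is $p(1-2\alpha_k)<2$; the same computation applied to $a(1/z)$ at the origin reproduces this inequality with $k=n-1$, while the remaining singularities of $a(1/z)$, sitting at the finite points $1/z_k$, are of the same type as those of $a$ and contribute no new condition. Hence (\ref{eq:p-cond}) is exactly the threshold placing both $a$ and $a(1/z)$ in $\rL^p_{\loc}(\bC)$. Fixing such a $p>2$, the second assertion is immediate from Lemma~\ref{lm:int2}: $Y$ is well defined, locally H\"older and in particular continuous, and it solves $\bar\partial Y=a$ with $\rL^p_{\loc}$ derivatives. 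Since $a$ is smooth on $\bC-\aset{z_0,\dots,z_{n-2}}$, hypoellipticity of $\bar\partial$ (bootstrapping from these $\rL^p_{\loc}$ derivatives) upgrades $Y$ to a smooth function there.

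For the third assertion I would localize. Fixing $k\le n-2$, choose a small disk $U$ around $z_k$ containing no other marked point and split $a=a\chi_U+a(1-\chi_U)$, so that $Y=Y_1+Y_2$ accordingly. The far piece $Y_2$ is smooth near $z_k$: the kernel $R(\zeta,z)$ is smooth in $z\in U$ for $\zeta\notin U$, and $a(\zeta)R(\zeta,z)=\cO(|\zeta|^{-2-2\alpha_{n-1}})$ at infinity guarantees convergence and differentiation under the integral sign. The near piece $Y_1$ is the image under Lemma~\ref{lm:int} of the compactly supported density $a\chi_U$, whose only singularity is the $\cO(|z-z_k|^{2\alpha_k-1})$ one at $z_k$; thus $a\chi_U\in\rL^p(\bC)$ for every $p<2/(1-2\alpha_k)$ when $\alpha_k<1/2$, and for every $p<\infty$ when $\alpha_k\ge1/2$ (where $a$ is already bounded). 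Lemma~\ref{lm:int} makes $Y_1$ uniformly $(1-2/p)$-H\"older, and letting $p$ tend to its supremum gives exponent $1-(1-2\alpha_k)=2\alpha_k$ in the first case and $1$ in the second. As the endpoint value of $p$ is not admissible, one loses an arbitrarily small amount, which is precisely the claimed $(2\alpha_k-\varepsilon)$- and $(1-\varepsilon)$-H\"older regularity.

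I expect the main difficulty to be bookkeeping rather than conceptual: the genuinely delicate point is that the $\rL^p$-integrability threshold for $a$ must match the target H\"older exponent exactly, with the endpoint loss producing the $\varepsilon$, and that the residue conditions of Lemma~\ref{lm:quad} produce just enough decay at infinity both for the far-field integral to converge and for $a(1/z)$ to carry the same singularity type at the origin that $a$ carries at $z_{n-1}=\infty$. Once these local models are pinned down, each of the three parts follows mechanically from Lemmas~\ref{lm:int} and~\ref{lm:int2}.
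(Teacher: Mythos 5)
Your proposal is correct and follows essentially the same route as the paper: the pointwise singularity estimates for $a$ obtained from Lemma~\ref{lm:quad} together with the residue and Gauss--Bonnet conditions, a direct $\rL^p$ computation for part (1), Lemma~\ref{lm:int2} plus ellipticity of $\bar{\partial}$ for part (2), and the cut-off decomposition $a = a\chi_U + a(1-\chi_U)$ fed into Lemma~\ref{lm:int} for part (3). The only cosmetic difference is that the paper establishes smoothness of the far piece near $z_k$ by applying Weyl's lemma to $Y - Y_k$ (whose $\bar{\partial}$-derivative vanishes there), whereas you differentiate under the integral sign; both are valid.
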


\begin{proof}
  (1) By (\ref{eq:sp-metric}) and (\ref{eq:t-id}), $a(z)$ can be written as $\bar{\psi} e^{-\gamma} = \bar{\psi} \prod^{n-2}_{k = 0} |z - z_k|^{2\alpha_k}$ for some $\psi \in Q(\Xst)$.
  Then Lemma~\ref{lm:quad} implies that $a(z)$ is smooth away from $z_k$, $k = 0, \cdots, n-2$, and near the singularity $z_k$, $k = 0,\cdots,n-2$, $a(z)$ can be estimated in the following way,
  \begin{equation}
    \label{eq:asym-k}
    a(z) = \cO(\frac{1}{|z - z_k|^{1- 2\alpha_k}}).
  \end{equation}
  Moreover, using (\ref{eq:gb-cond}), near $z_{n-1} = \infty$, we have
  \begin{equation*}
    a(z) = \cO(|z|^{1 - 2\alpha_{n-1}}).
  \end{equation*}
  Using the above two estimates, when $p$ satisfies (\ref{eq:p-cond}), the direct calculation shows that $a(z)$ has the desired integrability.

  (2) By Lemma~\ref{lm:int2}, we know that $Y(z)$ is a continuous function on $\bC$ and $\pdv{Y}{z} = a$.
  Since $a(z)$ is smooth away from $z_k$, $k = 0, \cdots, n-2$, due to the elliptic regularity of the $\bar{\partial}$ operator, the same is true for $Y(z)$.

  (3) This part results from the Sobolev embedding theorem essentially.
  Here, we give a direct proof using the results in this section.
  We first assume that $0< \alpha_k < 1/2$.
  Choose $D_k$ to be a small neighborhood of $z_k$ and $\chi_{D_k}(z)$ to be the characteristic function of $D_k$.
  Let $a_k = \chi_{D_k}(z)a(z)$.
  Due to (\ref{eq:asym-k}), $a_k\in \rL^{2/(1-2\alpha_k + \varepsilon)}(\bC)$ for any sufficient small $\varepsilon >0$.
  As a result, by Lemma~\ref{lm:int},
  \begin{equation*}
    Y_k(z) \coloneqq \frac{1}{\tpi} \int_{\bC} a_k(\zeta) R(\zeta,z) \diff \zeta \wedge \diff \bar{\zeta}
  \end{equation*}
  is a $(2\alpha_k - \varepsilon)$-H\"older continuous function on $\bC$.
  Meanwhile, as distributions, $Y$ and $Y_k$ satisfy
  \begin{equation*}
    \pdv{(Y- Y_k)}{\zb} = a - a_k.
  \end{equation*}
  By the definition of $a_k$ and Weyl's lemma, we know that $Y-Y_k$ is a smooth function near $z_k$, which means that $Y$ is also $(2\alpha_k - \varepsilon)$-H\"older continuous near $z_k$.

  For the $1/2\le \alpha_k < 1$ case, also by (\ref{eq:asym-k}), we notice that $a_k \in \rL^{N}(\bC)$ for any $N> 2$.
  Then, using the same arguments as above, we can obtain the regularity of $Y(z)$ near $z_k$.
\end{proof}

\begin{remark}
  \label{rk:dy}
  (1) By our assumption on the range of of $\alpha_k$, (\ref{eq:alpha-range}), the desired $p>2$ satisfying (\ref{eq:p-cond}) always exists.

  (2) Since $Y(z)$ is smooth away from $z_k$, $k = 0, \cdots, n-2$, the $\partial$ distributional derivative of $Y(z)$ can be calculated as
  \begin{equation*}
    \pdv{Y(z)}{z},\quad z\in X,
  \end{equation*}
  where the partial derivatives appearing in the above formula are the classical partial derivatives.
  This result is a simple corollary of the $\rL^p_{\loc}$ integrability of the distribution derivatives of $Y(z)$.
  A similar result also holds for the $\bar{\partial}$ distributional derivative of $Y(z)$.

  (3) One can also formulate Proposition~\ref{prop:two-cl} in terms of the vector field $Y \pdv*{}{z}$.
  In fact, if we define the value of this vector field to be $0$ at $z_{n-1} = \infty$, by using Lemma~\ref{lm:int} \&~\ref{lm:int2} as in the above proof, this vector field also has a similar regularity property near $z_{n-1}$ as other $z_k$, $0\le k \le n-2$.
\end{remark}

\subsection{A result about the Hilbert transformation.}
\label{sub:asym}
Let $h(z) \in \rL^p(\bC)$, $2 < p < \infty$.
Recall that we define $\opP(h)$ in (\ref{eq:def-opp}).
By~\cite[Proposition~4.23]{Imayoshi_1992in}, the distributional $\partial$ derivative of $\opP(h)$ is
\begin{equation*}
  \pdv{\opP(h)}{z} = \frac{1}{\tpi} \int_{\bC} \frac{h(\zeta)}{(\zeta-z)^2} \diff \zeta \wedge \diff \bar{\zeta} \eqqcolon \opT(h)(z).
\end{equation*}
Note that the integral in the middle is not a usual Lebesgue integral but the Hilbert transformation of $h(z)$, that is, a singular integral.
To give it a meaning, we first assume $h\in \sC^{\infty}_c(\bC)$ and calculate the improper integral using the Cauchy principal value.
For a general $h$, we approximate it by $\aset{h_n}\subseteq \sC^{\infty}_c(\bC)$ in $\rL^p(\bC)$ and define
\begin{equation*}
  \opT(h) \coloneqq \lim_{n\rightarrow \infty}\opT(h_n)
\end{equation*}
in $\rL^p(\bC)$ using the Calder{\'o}n-Zygmund theorem.
For details, readers can see~\cite[\S~4.2]{Imayoshi_1992in}.

For our later usage, we discuss an asymptotic property of the Hilbert transformation.
\begin{lemma}
  \label{lm:asym}
  Let $g(z) \in \rL^{p}(\Delta)$, $p > 2$, whose distributional derivatives are integrable on $\Delta$.
  We suppose that there exists $\beta < 1$ such that\footnote{For a measurable function, the Big O notation means that the needed estimates hold outside a measure zero set.} $g(z) = \cO(1/|z|^\beta)$ and $\diff g (z) = \cO(1/|z|^{\beta+1})$ as $z \rightarrow 0$.
  Then the Hilbert transformation of $g(z)$,
  \begin{equation*}
    G(z) \coloneqq \frac{1}{\tpi} \int_{\Delta} \frac{g(\xi)}{(\xi - z)^2}  \diff \xi \wedge \diff \bar{\xi},
  \end{equation*}
  satisfies
  \begin{equation}
    \label{eq:g-asym}
    G(z) = \cO(1/|z|^\gamma) \text{ as } z\rightarrow 0,
  \end{equation}
  where $\gamma = \max\aset{2/p,\beta} < 1$.
\end{lemma}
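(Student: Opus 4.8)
The plan is to analyze $G(z)$ only for $z$ near the origin, which is all the asymptotic statement requires, and to exploit the fact that $g$ is singular at $\xi = 0$ while the kernel $1/(\xi-z)^2$ is singular at $\xi = z$. Writing $r \coloneqq |z|$ and taking $r$ small enough that $2r < 1$, I first note that, away from the origin, the hypotheses make $g$ Lipschitz: on any region with $|\xi| \ge c > 0$ the bound $\diff g = \cO(|\xi|^{-\beta-1})$ shows $\diff g \in \rL^{\infty}$ there, so $g$ has a Lipschitz representative. In particular $g$ is continuous near any $z \ne 0$, which legitimises evaluating $G(z)$ as a genuine principal value agreeing with the Calder\'on--Zygmund operator $\opT$ of \S\ref{sub:asym}. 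I then split $\Delta$ into three pieces: the inner disk $B(z,r/2)$ surrounding the kernel singularity, the region $\aset{|\xi| < 2r} \setminus B(z,r/2)$, and the outer annulus $\aset{2r \le |\xi| < 1}$.

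The two outer pieces are handled by absolute-value estimates and H\"older's inequality, and each contributes $\cO(r^{-2/p})$. On the annulus $\aset{2r \le |\xi| < 1}$ one has $|\xi - z| \ge |\xi|/2$, so after bounding the kernel by $4/|\xi|^2$, H\"older's inequality against $\norm{g}_{\rL^p}$ produces the factor $\big(\int_{2r \le |\xi| \le 1} |\xi|^{-2q}\big)^{1/q} = \cO(r^{2/q - 2}) = \cO(r^{-2/p})$, where $1/p + 1/q = 1$. On the middle region $|\xi - z| \ge r/2$, so the kernel is $\le 4/r^2$; H\"older gives $\int_{|\xi| < 2r} |g| = \cO(r^{2/q})$, and multiplying by $r^{-2}$ again yields $\cO(r^{-2/p})$. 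Since $2/p \le \gamma$, both pieces are $\cO(r^{-\gamma})$.

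The real obstacle is the inner disk $B(z,r/2)$, where the kernel is not locally integrable and absolute values are useless; here the principal-value cancellation must be used. The key point is that $B(z,r/2)$ lies at distance $\ge r/2$ from the origin, so on it $\diff g = \cO(|\xi|^{-\beta-1})$ gives a Lipschitz constant of order $r^{-\beta-1}$ for $g$. Writing $g(\xi) = g(z) + (g(\xi) - g(z))$, the constant term contributes $g(z)\cdot\mathrm{p.v.}\!\int_{B(z,r/2)} (\xi-z)^{-2}\,\diff\xi\wedge\diff\bar\xi$, which vanishes by rotational symmetry because the angular factor integrates to zero over each circle centred at $z$. For the remainder the Lipschitz bound gives $|g(\xi)-g(z)| \le C r^{-\beta-1} |\xi - z|$, so the integrand is dominated by $C r^{-\beta-1}|\xi-z|^{-1}$, which is absolutely integrable over the disk with total mass $\cO(r^{-\beta-1}\cdot r) = \cO(r^{-\beta})$.

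Combining the three estimates gives $G(z) = \cO(r^{-\beta}) + \cO(r^{-2/p}) = \cO(|z|^{-\gamma})$ with $\gamma = \max\aset{2/p,\beta}$, as claimed. The decomposition also explains the shape of $\gamma$: the exponent $\beta$ is the intrinsic singularity of $g$ transported through the singular kernel on the inner disk, while the floor $2/p$ is the Calder\'on--Zygmund contribution coming precisely from the two regions where only the $\rL^p$ norm of $g$, and not its pointwise size, is available. The step I expect to need the most care is making the inner-disk estimate rigorous, namely verifying that $g$ genuinely admits a Lipschitz representative on $B(z,r/2)$ from the integrability-plus-$\cO$ hypothesis on $\diff g$, and that the principal value computed there coincides with the value $\opT(g)(z)$ of the Calder\'on--Zygmund operator.
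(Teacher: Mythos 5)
Your proof is correct, and the two core estimates are the same ones the paper uses: the cancellation $g(\xi)-g(z)$ on a disk of radius $\sim|z|/2$ about $z$, controlled by a derivative bound of order $|z|^{-\beta-1}$ to produce the $\cO(|z|^{-\beta})$ term, and a H\"older estimate against $\norm{g}_{\rL^p}$ elsewhere to produce the $\cO(|z|^{-2/p})$ floor. Where you genuinely diverge is in the regularization strategy. The paper first cuts off $g$ near the origin so that the pointwise bounds hold on all of $\Delta$, then mollifies, $g_n = g * \chi_n$, proves in Step~1 that the bounds $|g_n|\le C_0|z|^{-\beta}$, $|\diff g_n|\le C_0|z|^{-\beta-1}$ hold uniformly in $n$, runs the two-region estimate on each smooth $g_n$ (where the principal value is classical), and finally recovers the bound for $G=\lim_n \opT(g_n)$ by extracting an a.e.-convergent subsequence from the $\rL^p$-convergence given by Calder\'on--Zygmund. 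You instead work with $g$ directly, observing that the hypothesis $\diff g=\cO(|z|^{-\beta-1})$ makes $g$ Lipschitz on the inner disk $B(z,|z|/2)$ (which sits at distance $\ge|z|/2$ from the origin), so the principal value there exists classically and the subtraction argument applies without mollification; your three-region split also dispenses with the paper's cutoff $\tau$, since the outer annulus is handled by the $\rL^p$ norm alone. What your route buys is brevity --- Steps~1 and~3 of the paper disappear. What it costs is the one point you correctly flag at the end: you must justify that the pointwise principal value you compute agrees a.e.\ with $\opT(g)$ as defined in \S~\ref{sub:asym} via $\rL^p$-limits of $\opT(h_n)$ for $h_n\in\sC^{\infty}_c$. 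This is standard but not free: e.g.\ note that the truncated integrals $\opT_{\epsilon}(g)(z)$ converge, as $\epsilon\to 0$, to your expression for every $z\neq 0$ in a neighborhood of the origin (by exactly your inner-disk argument), while $\opT_{\epsilon}(g)\to\opT(g)$ in $\rL^p$, so the two limits coincide a.e.\ along a subsequence. With that sentence added, your argument is a complete and somewhat more economical alternative to the paper's.
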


\begin{proof}
To show the estimate (\ref{eq:g-asym}), we first notice that we can assume that
\begin{equation}
  \label{eq:est-g}
  |g(z)| \le \frac{C}{|z|^\beta},\; |\diff g(z)| \le \frac{C}{|z|^{\beta+1}}
\end{equation}
hold almost everywhere on $\Delta$.
In fact, the conditions in Lemma~\ref{lm:asym} imply that the above estimates hold at least on a small disk $\Delta_{\epsilon}$.
Let $\tau$ be a cut-off function such that $\tau(\Delta_{\epsilon/3}) = 1$ and $\tau(\Delta - \Delta_{2\epsilon/3}) = 0$.
Write $g$ as $g = \tau g + (1-\tau)g$.
Since $(1-\tau)g$ vanishes near the origin, when $|z|$ small enough, the improper integral appearing in $\opT((1-\tau)g)(z)$ is just a normal Lebesgue integral.
Therefore, $\opT((1-\tau)g)(z)$ is smooth near the origin and the desired estimate (\ref{eq:g-asym}) for $(1-\tau)g$ follows.
As a result, to prove (\ref{eq:g-asym}) for $g$, we only need to prove the same result for $\tau g$, which satisfies (\ref{eq:est-g}) on $\Delta$.
An additional advantage of using such a cut-off function is that we can and will assume that $\supp(g)$ is sufficiently small.

Let $\chi(z) \in \sC^{\infty}_c(\Delta)$ be a bump function supported around the origin which satisfies
\begin{equation}
  \label{eq:def-chi}
  0\le \chi(z) \le 1,\; \int_{\Delta} \chi(z) \dzs =1.
\end{equation}
For $n\in \bN_+$, we denote $\chi(nz)n^2$ by $\chi_n(z)$.
As a result, $\int_{\Delta} \chi_n(z) \dzs = 1$.
To calculate the Hilbert transformation of $g$, we need to approximate $g$ using smooth functions.
Here, we choose to construct such smooth approximations using the convolution with $\chi_n$, that is,
\begin{equation}
  \label{eq:def-gn}
  g_n(z) \coloneqq (g * \chi_n)(z) = \int_{\bC} g(z - u) \chi_n(u) \lvert\diff u\rvert^2.
\end{equation}
By~\cite[Theorem~1.3.2]{Hormander_1990ab}, $\aset{g_n}$ converges to $g$ in the $\rL^p$ norm.
To make our ongoing arguments run more smoothly, we will only consider $n$ large enough such that
\begin{equation}
  \label{eq:ass-gn}
  \supp(g_n) \subseteq \Delta_{1/3},\; \Vert g_n\Vert_{\rL^p(\Delta)} \le 2 \norm{g}_{\rL^p(\Delta)}.
\end{equation}

To show Lemma~\ref{lm:asym}, we proceed in three steps.
Firstly, we show that estimates like (\ref{eq:est-g}) hold uniformly for $g_n$.
Secondly, we show that the estimate like (\ref{eq:g-asym}) for $\opT(g_n)$ holds independent of $n$.
The last step is to deduce the desired estimate for $g$ from that of $g_n$.

\vspace{0.5\baselineskip}
\noindent\uline{\textsc{Step 1.}} We are going to show that there exists a constant $C_0$, independent of $n$, such that the following estimates hold on $\Delta - \aset{0}$,
\begin{equation}
  \label{eq:est-gn}
  |g_n(z)| \le \frac{C_0}{|z|^\beta},\; |\diff g_n(z)| \le \frac{C_0}{|z|^{\beta+1}}.
\end{equation}

By the general property of distributions, c.f.~\cite[Theorem~4.1.1]{Hormander_1990ab}, the following equalities hold,
\begin{equation*}
  \pdv{g_n}{z} = \pdv{g}{z} * \chi_n,\; \pdv{g_n}{\zb} = \pdv{g}{\zb} * \chi_n.
\end{equation*}
Therefore, we only need to show the estimate for $g_n$ in~(\ref{eq:est-gn}) and the remaining estimate for $\diff g_n$ can be proved in the same way with $\beta$ replaced by $\beta+1$.

Using (\ref{eq:est-g}), (\ref{eq:def-chi}), (\ref{eq:def-gn}) and by changing the variable, for $z \neq 0$, we have
\begin{multline}
  \label{eq:abs-gn}
  |g_n(z)| = \Big| n^2\int_{\Delta_{1/n}} g(z - u) \chi(nu) \lvert\diff u\rvert^2 \Big| \\
  \le Cn^2\int_{\Delta_{1/n}} \frac{\lvert\diff u\rvert^2}{|z - u|^\beta}
  = Cn^2 |z|^{2-\beta} \int_{\Delta_{1/(n|z|)}} \frac{\lvert\diff w\rvert^2}{|1 - w|^\beta}.
\end{multline}
To estimate the r.h.s.\ of (\ref{eq:abs-gn}), we discuss two cases.
If $1/(n|z|) \le 1/2$, (\ref{eq:abs-gn}) implies
\begin{multline}
  \label{eq:abs-gn1}
  |g_n(z)| \le Cn^2 |z|^{2-\beta} \int_{\Delta_{1/(n|z|)}} \frac{\lvert\diff w\rvert^2}{|1 - |w||^\beta} \le C2^\beta n^2 |z|^{2-\beta} \int_{\Delta_{1/(n|z|)}} \lvert\diff w\rvert^2 \\
  \le C_1 n^2 |z|^{2-\beta} \frac{1}{n^2 |z|^2} = \frac{C_1}{|z|^\beta}.
\end{multline}
If $1/(n|z|) > 1/2$, by taking
\begin{equation*}
  D_{1,1/(n|z|)} \coloneqq \aset{z \in \bC| |z - 1| \le 1/(n|z|)},
\end{equation*}
we can see that $D_{1,1/(n|z|)} \subseteq \Delta_{3/(n|z|)}$.
Thus, we have
\begin{equation*}
  \int_{\Delta_{1/(n|z|)}} \frac{\lvert\diff w\rvert^2}{|1 - w|^\beta} = \int_{D_{1,1/(n|z|)}} \frac{\lvert\diff w\rvert^2}{|w|^\beta} \le \int_{\Delta_{3/(n|z|)}} \frac{\lvert\diff w\rvert^2}{|w|^\beta} = \frac{C_2}{n^{2-\beta}|z|^{2-\beta}}.
\end{equation*}
Note that $1/(n|z|) > 1/2$ is equivalent to $n < 2/|z|$.
Then by the above inequality and (\ref{eq:abs-gn}), we know
\begin{equation}
  \label{eq:abs-gn2}
  |g_n(z)| \le Cn^2 |z|^{2-\beta} \frac{C_2}{n^{2-\beta}|z|^{2-\beta}} = CC_2 n^\beta \le \frac{2^\beta CC_2}{|z|^\beta}.
\end{equation}
The desired estimate for $g_n$ in (\ref{eq:est-gn}) follows from (\ref{eq:abs-gn1}) and (\ref{eq:abs-gn2}) directly.

\vspace{0.5\baselineskip}
\noindent\uline{\textsc{Step 2.}} We denote the Hilbert transformation of $g_n$ to be
\begin{equation*}
  G_n(z) \coloneqq \opT(g_n)(z) =  \frac{1}{\tpi} \int_{\Delta} \frac{g_n(\xi)}{(\xi - z)^2}  \diff \xi \wedge \diff \bar{\xi}.
\end{equation*}
We are going to show that there exists a constant $C_3$, independent of $n$, such that
\begin{equation}
  \label{eq:est-biggn}
  |G_n(z)| \le \frac{C_3}{|z|^\gamma}
\end{equation}
holds for $0 < |z|\le 1/3$.

As we have said, to define the Hilbert transformation of $g_n$, we need to use the Cauchy principal value.
More precisely, $G_n$ has the following expression,
\begin{equation}
  \label{eq:def-biggn}
  G_n(z) = \frac{1}{\tpi} \int_{\Delta - D_{z,|z|/2}} \frac{g_n(\xi)}{(\xi - z)^2}  \diff \xi \wedge \diff \bar{\xi}
  + \frac{1}{\tpi} \mathrm{p.v.}\int_{D_{z,|z|/2}} \frac{g_n(\xi)}{(\xi - z)^2}  \diff \xi \wedge \diff \bar{\xi},
\end{equation}
where $D_{z,|z|/2}$ is a disk centered at $z$ with radius $|z|/2$.

We can calculate the principal value appearing in (\ref{eq:def-biggn}) as follows
\begin{equation*}
  \frac{1}{\tpi} \mathrm{p.v.}\int_{D_{z,|z|/2}} \frac{g_n(\xi)}{(\xi - z)^2}  \diff \xi \wedge \diff \bar{\xi} = \frac{1}{\tpi} \int_{D_{z,|z|/2}} \frac{g_n(\xi) - g_n(z)}{(\xi - z)^2}  \diff \xi \wedge \diff \bar{\xi}.
\end{equation*}
By the mean value theorem and (\ref{eq:est-gn}), for $\xi \in D_{z,|z|/2}$, we have
\begin{equation*}
  | \frac{g_n(\xi) - g_n(z)}{\xi - z} | \le \frac{C_4}{|z|^{\beta +1}},
\end{equation*}
where $C_4$ is independent of $n$.
Thus, the above two formulae yield
\begin{multline}
  \label{eq:est-biggn1}
  \Big|\frac{1}{\tpi} \mathrm{p.v.}\int_{D_{z,|z|/2}} \frac{g_n(\xi)}{(\xi - z)^2}  \diff \xi \wedge \diff \bar{\xi}\Big| \le \frac{C_4}{|z|^{\beta+1}} \int_{D_{z,|z|/2}} \frac{\dxs}{|\xi - z|}\\
  = \frac{C_4}{|z|^{\beta+1}} \int_{\Delta_{|z|/2}} \frac{\dxs}{|\xi|} = \frac{C_4}{2 |z|^\beta}.
\end{multline}

To control the \nth{1} term on the r.h.s.\ of (\ref{eq:def-biggn}), we note that if $z \neq 0$,
\begin{equation}
  \label{eq:est-biggn3}
  (\int_{\bC} \frac{\dxs}{|\xi(\xi - z)|^q})^{1/q} = \frac{1}{|z|^{2/p}} (\int_{\bC} \frac{\dxs}{|\xi(\xi - 1)|^q})^{1/q}  = \frac{C_5}{|z|^{2/p}},
\end{equation}
where $q = p/(p-1)$.
By (\ref{eq:ass-gn}) and (\ref{eq:est-biggn3}), the following estimate holds,
\begin{multline}
  \label{eq:est-biggn2}
  \Big| \int_{\Delta - D_{z,|z|/2}} \frac{g_n(\xi)}{\xi(\xi - z)}  \dxs\Big|
  \le \int_{\Delta}  \frac{|g_n(\xi)|}{|\xi(\xi - z)|} \dxs\\
\le C_5 \norm{g_n}_{\rL^p(\Delta)} \frac{1}{|z|^{2/p}} \le 2C_5 \norm{g}_{\rL^p(\Delta)} \frac{1}{|z|^{2/p}}
\end{multline}
Meanwhile, since for $\xi\in \Delta - D_{z,|z|/2}$, we have $|\xi - z| > |z|/2$, the following estimate also holds for $z \neq 0$,
\begin{equation}
  \label{eq:est-biggn2.5}
  \begin{aligned}
    &\big| \int_{\Delta - D_{z,|z|/2}} g_n(\xi) (\frac{1}{(\xi - z)^2} -\frac{1}{\xi(\xi - z)}) \dxs\big|\\
    = &\big| \int_{\Delta - D_{z,|z|/2}} g_n(\xi) (\frac{z}{\xi - z} \cdot \frac{1}{\xi(\xi - z)}) \dxs\big|\\
    \le & 2 \int_{\Delta - D_{z,|z|/2}} \frac{|g_n(\xi)|}{|\xi(\xi - z)|}\dxs
    \le 4 C_5\norm{g}_{\rL^p(\Delta)} \frac{1}{|z|^{2/p}},
  \end{aligned}
\end{equation}
where for the last inequality, we use (\ref{eq:est-biggn3}) again.

Now, combining (\ref{eq:def-biggn}), (\ref{eq:est-biggn1}), (\ref{eq:est-biggn2}) and (\ref{eq:est-biggn2.5}), we get (\ref{eq:est-biggn}) immediately.

\vspace{0.5\baselineskip}
\noindent\uline{\textsc{Step 3.}}
Since $\aset{g_n}$ converges to $g$ in the $\rL^p$ norm as $n\rightarrow+\infty$, by the Calder{\'o}n-Zygmund theorem, c.f.~\cite[Proposition~4.22]{Imayoshi_1992in}, $\aset{G_n}$ also converges to $G$ in the $\rL^p$ norm as $n \rightarrow +\infty$.
Therefore, we know that $\aset{G_n}$ converges to $G$ in measure, which implies that there exists a subsequence of $\aset{G_n}$ converges to $G$ almost everywhere.
Now, we deduce from (\ref{eq:est-biggn}) that
\begin{equation*}
  |G(z)| \le \frac{C_3}{|z|^\gamma}
\end{equation*}
holds almost everywhere on $\Delta_{1/3}$ and the proof of Lemma~\ref{lm:asym} is completed.
\end{proof}

\section{A proof of the main theorem}
\label{sec:proof-of-main}

\subsection{}
In this section, we finish the proof of Theorem~\ref{thm:main}.
To do this, we will borrow an argument due to Tian~\cite{Tian_1987aa}.
As we have recalled in Introduction, Tian uses this argument to show a result about the Weil-Petersson metric for a family of Calabi-Yau manifolds, which is rather similar to Theorem~\ref{thm:main}.
Thus, we will first recall Tian's result briefly and then turn to the proof of Theorem~\ref{thm:main}.

\subsection{Tian's result.}
Let $(M,\omega)$ be a Calabi-Yau closed manifold of complex dimension $n$ with the \kah\ form $\omega$, $[\omega] \in \rH^2(M,\bZ)$.
Denote the universal polarized deformation space of $M$ by $\cM_{\cy}$.
$\cM_{\cy}$ is smooth due to the Bogomolov-Tian-Todorov theorem.
The tangent space of $\cM_{\cy}$ at $M$ is isomorphic to $\rH^1(M, \rT M)_{\omega}$, a subspace of $\rH^1(M, \rT M)$.
Let $\varphi\in \rH^1(M, \rT M)_{\omega}$ (and choose a harmonic representative of $\varphi$).
By using the \kah\ metric on $M$, one can define the Weil-Petersson metric $\omega_{\pw}(\varphi,\bar{\varphi})$ similar to (\ref{eq:wp-me}).

On the other hand, let
\begin{equation*}
  D' \coloneqq \aset{\text{complex lines $l$ in }\rH^n(M,\bC)| \forall \psi\in l - \{0\}, Q(\psi,\bar{\psi}) > 0, Q(\psi,\psi) =0},
\end{equation*}
where $Q(-,-)$ is the intersection from on $\rH^n(M,\bC)$.
Then $D'$ inherits a \kah\ metric $\omega_{\rH}$ from $Q$.
There is a map $\Omega'$ from a small neighborhood of the origin in $\rH^1(M, \rT M)_{\omega}$ to $D'$ called the period mapping.
Then, Tian proves the following equality~\cite[Theorem~2]{Tian_1987aa},
\begin{equation}
  \label{eq:tian-eq}
  \omega_{\pw} = {\Omega'}^{*}(\omega_{\rH}).
\end{equation}

To show such a result, the key is Tian's following observation about the period mapping.
Let $s$ be a small complex parameter.
Then there exists a family of $n$-forms $\Psi_s$ on $M$ such that $\Omega'(s\varphi) = [\Psi_s] \in D'$.
Moreover,
\begin{equation}
  \label{eq:tian-lm}
  \pdv{\Psi_s}{s}\Big|_{s = 0} = c \Psi_0 + \varphi \mathbin\lrcorner \Psi_0,
\end{equation}
where $c_0$ is a constant and $\lrcorner$ is the contraction between two tensors.

Although our description is very sketchy, it is still not difficult to catch the similarity between the definitions of ${\Omega'}^{*}(\omega_{\rH})$ and  $\omega_{\tv}$.
As we have said in Introduction, Theorem~\ref{thm:main} is just a variant of Tian's equality (\ref{eq:tian-eq}) for the singular metrics in this perspective.
Consequently, it is natural to use the same argument to prove Theorem~\ref{thm:main}.
More precisely, it means that we need to find a family of forms similar to $\Psi_s$ and prove (\ref{eq:tian-lm}) for such forms.
It turns out such a family of forms can be constructed using $\eta(\bfu)$ defined in (\ref{eq:balpha}) more or less.
But, the difficulty is also obvious, the inevitable singularities near the punctures.
To overcome them, results in Section~\ref{sec:some-ana-res} come into play.

\subsection{A key lemma.}
In the rest of the paper, to apply the regularity results in Section~\ref{sec:some-ana-res}, we fix $p>2$ to be a number satisfies (\ref{eq:p-cond}), which is possible, due to the assumption on the range of cone angles (\ref{eq:alpha-range}), as we have noted in Remark~\ref{rk:dy}.

To prove Theorem~\ref{thm:main}, we need to show the following equality for any $v \in \rT_{\Xst} \cT_{0,n}$,
\begin{equation}
  \label{eq:key-eq}
  \omega_{\tv}(v,\bar{v}) = \omega_{\pw}(v,\bar{v}).
\end{equation}
As we have outlined, in this subsection, we will construct a family of forms (classes) associated with $v$ and prove a key lemma, i.e.\ Lemma~\ref{lm:key}, similar to (\ref{eq:tian-lm}).

Let $a \in V(\Xst,g)$ be the element corresponding to $v$ by (\ref{eq:t-id}).
Meanwhile, since $\rT_{\Xst}\cT_{0,n}$ is also isomorphic to a quotient space of $\rL^{\infty}(\Xst, \rT \Xst \otimes \bar{\rT}^* \Xst)$, we can find $\mu \in \rL^{\infty}(\Xst, \rT \Xst \otimes \bar{\rT}^* \Xst)$ whose equivalence class also corresponds to $v$.
In the following, we will view $\mu$ as an $\rL^{\infty}$ function on $\bC$.
Recall that $Q(\Xst)$ is the holomorphic cotangent space at $\Xst$ of $\cT_{0,n}$.
For any $\varphi \in Q(\Xst)$, since $a$ and $\mu$ are two different representative of $v$,  we have
\begin{equation}
  \label{eq:two-v}
  \langle \varphi, v \rangle = \int_X \varphi a \dzs = \int_X \varphi\mu \dzs,
\end{equation}
where $\langle-,-\rangle$ is the natural pair between the tangent space and the cotangent space.

Let $s\in \Delta_{\epsilon}$ be a small enough complex parameter.
By the result that we have recalled in \S~\ref{sub:prep}, we can find a solution $w_s$ to (\ref{eq:bel}) with $\mu$ replaced by $s\mu$.
Then we have the following map,
\begin{equation*}
  \begin{array}{cccc}
    \opc: &\Delta_{\epsilon} &\rightarrow& \cW_n \\
          &s    &\mapsto& (0, 1, w_s(z_2),\cdots,w_s(z_{n-1}))
  \end{array}.
\end{equation*}
As $\opc$ is holomorphic, by pulling back $\rP_{\cW_n}$, $\rP_{\Delta_{\epsilon}} \coloneqq \opc^*\rP_{\cW_n}$ gives a family of punctured spheres over $\Delta_{\epsilon}$.
When $s = 0$, $w_0 = \id$, which means that
\begin{equation*}
  \opc(0) = (0, 1, z_2, \cdots, z_{n-1})
\end{equation*}
is the base point $o$ of $\cW_n$.
Therefore, the fiber of $\rP_{\Delta_{\epsilon}}$ over the origin $0\in \Delta$ is $\rP_o$, which is biholomorphic to $\Xst$.

We remark that the above construction for the family $\rP_{\Delta_{\epsilon}}$ can be viewed as an application of the universal curve over the space of the Beltrami differentials.
Since there is more than one way to construct such a universal curve, it is possible to construct $\rP_{\Delta_{\epsilon}}$ by other methods.
Readers can find more materials about this in~\cite[\S~4.8]{Hubbard_2006te}.

As in \S~\ref{sub:def-tv}, we take $\bfu = (u_2,\cdots,u_{n-2})$ to be the global coordinates on $\cW_n$ induced from $\bC^{n-3}$ and identify $\bfu$ as a local coordinates near $\Xst$ in $\tnn$.
Then, in this local coordinate, we have
\begin{equation}
  \label{eq:v-loc}
  v = \sum_{i=2}^{n-2} \pdv{w_s(z_i)}{s}\Big|_{s=0} \frac{\partial}{\partial u_i}.
\end{equation}

Recall that with $\bfu$ fixed, $\eta(\bfu)$ defined in (\ref{eq:balpha}) is a $1$-form on $\rP_{\bfu} = X_{\bfu} = \bC - \aset{0,1,u_2,\cdots,u_{n-2}}$, which enters the definition of $\omega_{\tv}$.
In the following, we use $\eta_s$ as a shorthand for $\eta(w_s(z_2),\cdots,w_s(z_{n-2}))$.
Using (\ref{eq:v-loc}) and the definition of $\omega_{\tv}$, (\ref{eq:tv-local}), we can calculate the l.h.s.\ of (\ref{eq:key-eq}) as follows,
\begin{multline}
  \label{eq:l-key-eq}
  -i\omega_{\tv}(v,\bar{v}) = \pdv*{}{s,\bar{s}}{\Big(\log{\int_{X_s} \frac{\eta_s \wedge \overline{\eta}_s}{-\tpi}}\Big)}\Big|_{s = 0} \\
  = \pdv*{}{s,\bar{s}}{\Big(\log{(\int_{X_s} \prod^{n-2}_{l=0} |z- w_s(z_l)|^{-2\alpha_l} \dzs )}\Big)}\Big|_{s = 0},
\end{multline}
where $X_s = \bC - \aset{0,1,w_s(z_2),\cdots, w_s(z_{n-2})}$.

From now on, we will assume that $w_s(z)$ is smooth with respect to the $s$ and $z$ variables (equivalently $\mu$ is smooth), which is possible as we have recalled in \S~\ref{sub:prep}.
Note that $w_s$ yields a fiberwise diffeomorphism between $\rP_{\Delta_{\epsilon}}$ and $\Delta_{\epsilon} \times \rP_o$.
As a result, we can pull $\eta_s$ back to $\rP_o $, i.e.\ back to $X$, to obtain a pull-back form $w_s^*\eta_s$.
Let $[w_s^*\eta_s]$ be the de Rham class of $w_s^*\eta_s$.
As we have recalled in \S~\ref{sub:prep}, when fixing $z$, $w_s(z)$ is a holomorphic function of $s$.
Therefore, by our construction of $\eta(\bfu)$, (\ref{eq:balpha}), we see that $s \mapsto [w_s^*\eta_s]$ is a holomorphic map from $\Delta_\epsilon$ to $\rH^1(X,F)$.

We are going to expand the r.h.s.\ of (\ref{eq:l-key-eq}).
To bypass the difficulties caused by the singularities of $w_s^*\eta_s$, as in~\cite[Proposition~2.19]{Deligne_1986mo}, we can find smooth section $\sigma_s$ of $F$ supported near the singularities of $w_s^*\eta_s$ such that $\tau_s \coloneqq w_s^*\eta_s - \diff \sigma_s$ has a compact support\footnote{For the small $s$, the supports of $\aset{\tau_s}$ are uniform.} and
\begin{equation}
  \label{eq:wt}
  \opj^{-1} [w_s^*\eta_s] = [\tau_s]\in \rH^1_c(X,F),\quad \int_{X} w_s^*\eta_s \wedge \overline{w_s^*\eta}_s = \int_{X} \tau_s \wedge \overline{\tau}_s.
\end{equation}
As a result, $s \mapsto [\tau_s]$ is also a holomorphic map from $\Delta_\epsilon$ to $\rH^1_c(X,F)$.
Then, similar to~\cite[(7.5)]{Tian_1987aa}, the r.h.s.\ of (\ref{eq:l-key-eq}) can be expanded as follows,
\begin{equation}
  \label{eq:wvv2}
  \begin{aligned}
  -i\omega_{\tv}(v,\bar{v}) =& \pdv*{}{s,\bar{s}}{\Big(\log{\int_{X_s} \frac{\eta_s \wedge \overline{\eta}_s}{-\tpi}}\Big)}\Big|_{s = 0}\\
    =& \pdv*{}{s,\bar{s}}{\Big(\log{\int_{X} \frac{w_s^*\eta_s \wedge \overline{w_s^*\eta}_s}{-\tpi}}\Big)}\Big|_{s = 0}
       = \pdv*{}{s,\bar{s}}{\Big(\log{\int_{X} \frac{\tau_s \wedge \overline{\tau}_s}{-\tpi}}\Big)}\Big|_{s = 0}\\
    =& \frac{\int_X \tau_0 \wedge \overline{\tau}_0 \cdot \int_X \pdv{\tau_s}{s} \wedge \overline{\pdv{\tau_s}{s}}  - \int_X \tau_0 \wedge \overline{\pdv{\tau_s}{s}} \cdot \int_X \pdv{\tau_s}{s} \wedge \overline{\tau}_0}{(\int_X \tau_0 \wedge \overline{\tau}_0)^2}\Big|_{s = 0}.
  \end{aligned}
\end{equation}
In this expression, we also note that
\begin{equation}
  \label{eq:wet}
  \Big[\pdv{\tau_s}{s}\Big|_{s=0}\Big] = \Big[\Big(\pdv{(w^*_s\eta_s)}{s} - \diff (\pdv{\sigma_s}{s})\Big)\Big|_{s=0}\Big] = \opj^{-1}\Big[\pdv{(w^*_s\eta_s)}{s}\Big|_{s=0}\Big].
\end{equation}

In view of (\ref{eq:wet}), to further simplify (\ref{eq:wvv2}), we need to find a good representative element for the class $[\pdv{(w^*_s\eta_s)}{s}|_{s=0}]$.
Keeping this in mind, we introduce $V$ to be the following $(1,0)$-type complex vector field over $\bC$,
\begin{equation}
  \label{eq:fz}
  V(z) \coloneqq \Big(\frac{1}{\tpi} \int_{\bC} \big(a(\zeta) - \mu(\zeta)\big) R(\zeta,z) \diff \zeta \wedge \diff \bar{\zeta}\Big) \frac{\partial}{\partial z}.
\end{equation}
Owing to Lemma~\ref{lm:int2} and Proposition~\ref{prop:two-cl}, we know that $V$ is continuous on $\bC$ and smooth away from $z_k$, $k = 0,\cdots,n-2$.

Meanwhile, by Lemma~\ref{lm:quad}, one can check that $R(\zeta,z_k) \in Q(\Xst)$, $k = 0,\cdots,n-2$.
As a result, the equality (\ref{eq:two-v}) implies that $V$ vanishes at $z_k$.
Moreover, as we have said in Remark~\ref{rk:dy}, we can extend $V$ continuously to a $(1,0)$-type complex vector field on $\hbC$, denoting the extended vector field by $V$ still, and $V(z_{n-1}) = V(\infty) = 0$.

Now, since $\rT_{\bR} \hbC$ is isomorphic to $\rT \hbC$ as real vector bundles, $V$ corresponds to a real continuous vector field on $\hbC$, denoted by $V_{\bR}$.
Fix a small number $\varepsilon_0$.
Let $D_{\varepsilon_0}$ (resp.\ $D_{\varepsilon_0/2}$) be the union of discs centered at $z_k$, $k = 0.\cdots,n-1$ of radius $\varepsilon_0$ (resp.\ $\varepsilon_0/2$) with respect to a fixed metric on $\hbC$.
For any small enough real number $\delta$, we can find a flow $\phi_t$, $|t| < \delta$, defined on $\hbC - D_{\varepsilon_0}$, which is generated by $V_{\bR}$, i.e.\
\begin{equation}
  \label{eq:phit}
  \frac{\diff \phi_t(z)}{\diff t}\Big|_{t=0} = 2V_{\bR}(z),\; \phi_0(z) = z,\;
  \phi_t(z) \subseteq \hbC - D_{\varepsilon_0/2},
\end{equation}
where $z\in \hbC - D_{\varepsilon_0}$.

To be more precise, on $\hbC - D_{\varepsilon_0/2} \subseteq \bC \simeq \bR^2$, we introduce the real coordinates $(x,y)$, which corresponds to the complex coordinate $x + iy$ in the usual way.
With such real coordinates, we can treat $V_{\bR}$ as a function valued in $\bR^2$, i.e.\ $V_{\bR} = (V^1, V^2)$.
Then, $V$, viewed as a complex function, equals to $V = V^1 + iV^2$.
One can check that on $\bC \subseteq \hbC$, such an identification is compatible with the usual isomorphism between $\rT_{\bR} \hbC$ and $\rT \hbC$.
Let $(\phi^1_t, \phi^2_t)$ be the components of $\phi_t$ in the real coordinates.

By writing the complex parameter $s$ as $s = t + i \tau$, we define $\Phi_s \coloneqq \phi^1_t + i\phi^2_t$ for $|s|$ small enough.
Then, by (\ref{eq:phit}) and the relation between $V$ and $V_{\bR}$, $\Phi_s$ satisfies the following property
\begin{equation}
  \label{eq:phis}
  \pdv{\Phi_s(z)}{s}\Big|_{s = 0} = V(z),\; \Phi_0(z) = z,\; \Phi_s(z) \subseteq \hbC - D_{\varepsilon_0/2},
\end{equation}
where $z\in \hbC - D_{\varepsilon_0}$.

\begin{remark}
  \label{rk:iu}
  Since $V$ or $V_{\bR}$ is a continuous vector field defined on $\hbC$, it seems natural to presume that the flow $\Phi_s$ or $\phi_t$ can be extended to the whole $\hbC$.
  However, due to Lemma~\ref{lm:int2}, we know that $V$ is not Lipschitz continuous, but at most H\"older continuous around the point $z_k$, $k = 0,\cdots,n-1$, which leads to a well-known difficulty in the ODE theory, i.e.\ no uniqueness of solutions.
Nevertheless, certain ``infinitesimal uniqueness'' still holds, which is enough for our application.
  Namely, for any $z\in X = \hbC - \aset{z_0,\cdots,z_{n-1}}$, although the definition of $\Phi_s(z)$ may depend on $\varepsilon_0$, $\frac{\partial \Phi_s(z)}{\partial s}|_{s=0}$ does not depend on $\varepsilon_0$.
\end{remark}

Let $z \in X$ and choose $\varepsilon_0$ small enough such that $z \in \hbC - D_{\varepsilon_0}$.
Using $\Phi_s$ and $w_s$, we take their composition $A_s \coloneqq w_s \circ \Phi_s$, which is a diffeomorphism from $\hbC - D_{\varepsilon_0}$ to its image for a fixed $s$.
By (\ref{eq:bel-diff}), (\ref{eq:fz}) and (\ref{eq:phis}), we have
\begin{multline}
  \label{eq:As}
  \pdv{A_s}{s}\Big|_{s=0}(z) = \pdv{w_s}{s}\Big|_{s=0}(\Phi_0(z)) + \pdv{w_0}{z}\pdv{\Phi_s}{s}\Big|_{s=0}(z) + \pdv{w_0}{\zb}\pdv{\overline{\Phi}_s}{s}\Big|_{s=0}(z)\\
  = \Big(\frac{1}{\tpi} \int_{\bC} a(\zeta) R(\zeta,z) \diff \zeta \wedge \diff \bar{\zeta}\Big) \frac{\partial}{\partial z},
\end{multline}
where for the last equality, we also use $w_0(z) = z$.

Now we claim that the derivative of $A_s$ with respect to $s$ at $0$ can give a desired representative of
$[\pdv{(w^*_s\eta_s)}{s}|_{s=0}]$.
\begin{lemma}
  \label{lm:cl-eq}
  \begin{equation}
    \label{eq:cl-eq}
    \Big[\pdv{(w^*_s\eta_s)}{s}\Big|_{s=0}\Big] = \Big[\pdv{(A^*_s\eta_s)}{s}\Big|_{s=0}\Big] \in \rH^1(X, F).
  \end{equation}
\end{lemma}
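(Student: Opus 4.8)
The plan is to exploit the factorization $A^*_s\eta_s = \Phi^*_s(w^*_s\eta_s)$ afforded by $A_s = w_s\circ\Phi_s$, and to recognize the difference of the two derivatives in (\ref{eq:cl-eq}) as an exact form. Writing $\beta_s \coloneqq w^*_s\eta_s$, the Leibniz rule for the $s$-derivative of a pullback, evaluated at $s = 0$ where $\Phi_0 = \id$, gives
\[
  \pdv{(A^*_s\eta_s)}{s}\Big|_{s=0}
  = \pdv{\beta_s}{s}\Big|_{s=0} + \pdv{(\Phi^*_s\beta_0)}{s}\Big|_{s=0}
  = \pdv{(w^*_s\eta_s)}{s}\Big|_{s=0} + \pdv{(\Phi^*_s\eta_0)}{s}\Big|_{s=0},
\]
using $\beta_0 = w^*_0\eta_0 = \eta_0$. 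Everything then reduces to showing that the last term is exact.

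To evaluate $\pdv{(\Phi^*_s\eta_0)}{s}|_{s=0}$ I would use that $\Phi_s = \phi^1_t + i\phi^2_t$ depends on $s = t + i\tau$ only through $t = \Re s$, while $\phi_t$ is the flow of $2V_{\bR}$ by (\ref{eq:phit}). Hence $\pdv{}{s} = \tfrac12(\pdv{}{t} - i\pdv{}{\tau})$ acts as $\tfrac12\pdv{}{t}$, and the factor $2$ in the generator cancels the $\tfrac12$ --- the same cancellation that produces (\ref{eq:phis}) --- so that $\pdv{(\Phi^*_s\eta_0)}{s}|_{s=0} = \mathcal{L}_{V_{\bR}}\eta_0$, the Lie derivative along $V_{\bR}$. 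Since $\eta_0 = \eta(z_2,\cdots,z_{n-2})$ is a holomorphic $F$-valued $1$-form, it is closed for the flat connection on $F$, so Cartan's formula collapses to $\mathcal{L}_{V_{\bR}}\eta_0 = \diff(\iota_{V_{\bR}}\eta_0)$. Therefore, on $\hbC - D_{\varepsilon_0}$,
\[
  \pdv{(A^*_s\eta_s)}{s}\Big|_{s=0} - \pdv{(w^*_s\eta_s)}{s}\Big|_{s=0} = \diff(\iota_{V_{\bR}}\eta_0),
\]
an exact $F$-valued form; this would give (\ref{eq:cl-eq}) at once were the identity valid on all of $X$.

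The main obstacle is precisely that the flow $\Phi_s$, and with it the displayed identity, lives only on $\hbC - D_{\varepsilon_0}$: by Remark~\ref{rk:iu} the field $V$ is not Lipschitz at the punctures, so $\Phi_s$ does not extend across $D_{\varepsilon_0}$. To promote the identity to an equality of classes in $\rH^1(X,F)$, I would show that the primitive $\iota_{V_{\bR}}\eta_0$ extends to a global $F$-valued function on $X$ vanishing at the marked points. Near $z_k$ the form $\eta_0$ carries a $|z - z_k|^{-\alpha_k}$ singularity, while $V$ vanishes at $z_k$ with the H\"older rate supplied by Proposition~\ref{prop:two-cl}(3), namely $V = \cO(|z - z_k|^{2\alpha_k - \varepsilon})$ for $0 < \alpha_k < 1/2$ (and with exponent $1-\varepsilon$ for $1/2 \le \alpha_k < 1$); hence $\iota_{V_{\bR}}\eta_0 = \cO(|z - z_k|^{\alpha_k - \varepsilon}) \to 0$. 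Consequently $\iota_{V_{\bR}}\eta_0$ is continuous on $X$ with $\rL^p_{\loc}$ derivatives, so $\diff(\iota_{V_{\bR}}\eta_0)$ is genuinely exact across the punctures.

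Finally I would appeal to the infinitesimal uniqueness of Remark~\ref{rk:iu}: because $\pdv{A_s}{s}|_{s=0}$ is the globally defined vector field (\ref{eq:As}), independent of $\varepsilon_0$, the form $\pdv{(A^*_s\eta_s)}{s}|_{s=0}$ determines a class in $\rH^1(X,F)$ independent of the cutoff, and the exactness just established then yields (\ref{eq:cl-eq}). The delicate point throughout is the behavior at the $z_k$: one must verify that the contributions from shrinking $D_{\varepsilon_0}$ do not survive in cohomology, which is exactly where the vanishing of $V$ at the punctures --- forced by (\ref{eq:two-v}) via $R(\zeta,z_k) \in Q(\Xst)$ --- together with the regularity estimates of Section~\ref{sec:some-ana-res} are indispensable.
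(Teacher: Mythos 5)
Your argument is correct and rests on the same mechanism as the paper's proof: writing $A_s = w_s\circ\Phi_s$, the two $s$-derivatives differ by the $s$-derivative of $\Phi^*_s(w^*_s\eta_s)-w^*_s\eta_s$, which is exact because $\Phi_s$ is a flow starting at the identity. The difference is in execution. The paper keeps the argument soft: it restricts everything to $X - D^*_{\varepsilon_0}$, invokes the inclusion-induced isomorphism $\rH^1(X-D^*_{\varepsilon_0},F)\simeq\rH^1(X,F)$, writes $\Phi^*_s(w^*_s\eta_s)-w^*_s\eta_s=\diff\gamma_s$ for an unspecified transgression form $\gamma_s$, and differentiates at $s=0$. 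You instead compute the transgression derivative explicitly via the Lie derivative and Cartan's formula, obtaining the concrete primitive $\iota_{V_{\bR}}\eta_0$, and you globalize by controlling this primitive at the punctures. Your route buys an explicit formula for the primitive --- essentially the difference of the functions $Y(z)\prod_l(z-z_l)^{-\alpha_l}\cdot e$ and its $w_s$-analogue, which the paper only extracts later in \S~\ref{sub:proof-thm-main} --- at the cost of the H\"older analysis near the $z_k$. One small economy you could make: for this lemma the extension across the punctures is not actually needed. Since $V$ is defined on all of $\hbC$ and smooth away from the $z_k$ by Proposition~\ref{prop:two-cl}, the contraction $\iota_{V_{\bR}}\eta_0$ is already a globally defined smooth section of $F$ over the open surface $X$, and the pointwise identity holds at every $z\in X$ by shrinking $\varepsilon_0$ (the ``infinitesimal uniqueness'' of Remark~\ref{rk:iu}); exactness in $\rH^1(X,F)$ is then immediate, the punctures not being points of $X$. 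The vanishing rates $\cO(|z-z_k|^{\alpha_k-\varepsilon})$ you establish only become essential later, when one passes to compactly supported classes and must kill the boundary integrals as in (\ref{eq:est-yu}).
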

Here, we would like to emphasize again that as in Remark~\ref{rk:iu}, although the definition of $A_s$ depends on $\varepsilon_0$, the quantities associated with the derivative of $A_s$ at $0$, like $\pdv{A_s}{s}|_{s=0}$ and $\pdv{(A^*_s\eta_s)}{s}|_{s=0}$, are independent of $\varepsilon_0$.

\begin{proof}
  Let $D^*_{\varepsilon_0}\coloneqq D_{\varepsilon_0} - \aset{z_0,\cdots,z_{n-1}}$, a union of small discs with centers removed.
  By the homotopy invariance of the cohomology group, the inclusion $\opi:X - D^*_{\varepsilon_0} \hookrightarrow X$ induces an isomorphism
  \begin{equation*}
    \opi^*:\rH^1(X, F) \simarrow \rH^1(X - D^*_{\varepsilon_0},F).
  \end{equation*}
  Therefore, to show (\ref{eq:cl-eq}), we only need to show that
  \begin{equation}
    \label{eq:cl-eqi}
    \Big[\opi^*\Big(\pdv{(w^*_s\eta_s)}{s}\Big|_{s=0}\Big)\Big] = \Big[\opi^*\Big(\pdv{(A^*_s\eta_s)}{s}\Big|_{s=0}\Big)\Big].
  \end{equation}
  In the rest of the proof, except explicitly stressed, all the forms are defined on $X - D^*_{\varepsilon_0}$ and we will omit $\opi^*$ before the forms.

  By (\ref{eq:phis}), $\Phi_s(X - D^*_{\varepsilon_0}) \subseteq X - D^*_{\varepsilon_0/2}$ for $|s|$ small.
  Hence, $\Phi^*_s(w^*_s\eta_s)$ is a well-defined form on $X - D^*_{\varepsilon_0}$.
  Since $\Phi_s$ is homotopic to the identity map, we have
  \begin{equation*}
    A^*_s\eta_s - w^*_s\eta = \Phi^*_s(w^*_s\eta_s) - w^*_s\eta_s = \diff \gamma_s,
  \end{equation*}
  where $\gamma_s$ is a transgression form, which can be written down explicitly, c.f.~\cite{Bott_1982aa}.
  Therefore,
  \begin{equation*}
    \pdv{(A^*_s\eta_s)}{s}\Big|_{s=0} - \pdv{(w^*_s\eta_s)}{s}\Big|_{s=0} = \diff \Big(\pdv{\gamma_s}{s}\Big|_{s=0}\Big),
  \end{equation*}
  from which (\ref{eq:cl-eqi}) follows.
\end{proof}

Unlike $\pdv{(w^*_s\eta_s)}{s}|_{s=0}$,  the following lemma asserts that $\pdv{(A^*_s\eta_s)}{s}|_{s=0}$ has a clean expression, which itself is a counterpart to (\ref{eq:tian-lm}) or~\cite[Lemma~7.2]{Tian_1987aa}.

\begin{lemma}
  \label{lm:key}
  The derivative of $A^*_s\eta_s$ with respect to $s$ at $0$ has the following expression,
  \begin{equation*}
    \pdv{(A^*_s\eta_s)}{s}\Big|_{s = 0} = c_0 \eta_0 + (a \frac{\partial}{\partial z} \otimes \diff \zb) \mathbin\lrcorner \eta_0,
  \end{equation*}
  where $c_0$ is a constant.\end{lemma}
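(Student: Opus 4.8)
The plan is to differentiate $A_s^{*}\eta_s$ directly and split the result into its $(1,0)$- and $(0,1)$-parts. On the target fibre $X_s$ write $\eta_s=\big(\prod_{k=0}^{n-2}(\zeta-w_s(z_k))^{-\alpha_k}e\big)\diff\zeta$, so that, pulling back by $\zeta=A_s(z)$ and setting $F_s(z):=\prod_{k=0}^{n-2}(A_s(z)-w_s(z_k))^{-\alpha_k}e$, one has $A_s^{*}\eta_s=F_s\big(\partial_zA_s\,\diff z+\partial_{\bar z}A_s\,\diff\bar z\big)$. Since $A_0=\id$ gives $\partial_zA_0=1$, $\partial_{\bar z}A_0=0$ and $\eta_0=F_0\,\diff z$, differentiating at $s=0$ and exchanging $\partial_s$ with $\partial_z,\partial_{\bar z}$ (legitimate on $X$, where $A_s$ is smooth by Proposition~\ref{prop:two-cl}) yields
\[
  \frac{\partial(A_s^{*}\eta_s)}{\partial s}\Big|_{s=0}=\big(\partial_s F_s|_{0}+F_0\,\partial_z Y\big)\diff z+F_0\,(\partial_{\bar z}Y)\,\diff\bar z,
\]
where $Y$ is the coefficient function of $\frac{\partial A_s}{\partial s}\big|_{0}$ computed in (\ref{eq:As}); here $e$ is a flat (horizontal) section, hence locally constant, and contributes nothing to the $s$-, $z$- or $\bar z$-derivatives.

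By Lemma~\ref{lm:int2} we have $\partial_{\bar z}Y=a$, so the $\diff\bar z$-part is $aF_0\,\diff\bar z=(a\tfrac{\partial}{\partial z}\otimes\diff\zb)\mathbin\lrcorner\eta_0$, which is exactly the second term claimed. It therefore remains to show that the scalar $G:=\partial_s\log F_s|_{0}+\partial_z Y$ is a constant, for then the $\diff z$-part equals $c_0\eta_0$ with $c_0=G$. Writing $W$ for the Cauchy transform of $\mu$, so that $\frac{\partial w_s(z_k)}{\partial s}\big|_{0}=W(z_k)$ by (\ref{eq:bel-diff}), the definition of $F_s$ gives $\partial_s\log F_s|_{0}=-\sum_{k=0}^{n-2}\alpha_k\frac{Y(z)-W(z_k)}{z-z_k}$; applying (\ref{eq:two-v}) to $\varphi=R(\cdot,z_k)\in Q(\Xst)$ shows $W(z_k)=Y(z_k)$, whence
\[
  G=\partial_z Y-\sum_{k=0}^{n-2}\alpha_k\frac{Y(z)-Y(z_k)}{z-z_k}.
\]

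The first step is to check that $G$ is holomorphic on $X$. Set $P:=\sum_{k=0}^{n-2}\alpha_k/(z-z_k)$. Since $a=\bar\psi\prod_{k=0}^{n-2}|z-z_k|^{2\alpha_k}$ with $\psi\in Q(\Xst)$ holomorphic, a one-line computation gives $\partial_z a=aP$; combined with $\partial_{\bar z}Y=a$ and the holomorphy of each $1/(z-z_k)$ this gives $\partial_{\bar z}G=\partial_z a-aP=0$ on $X$. The second step is to show each finite puncture $z_k$ is a removable singularity. By Proposition~\ref{prop:two-cl} the function $Y$ is H\"older near $z_k$, so $\frac{Y(z)-Y(z_k)}{z-z_k}=o(1/|z-z_k|)$; on the other hand $\partial_z Y=\opT(a)+\text{const}$, and applying Lemma~\ref{lm:asym} to a cut-off of $a$ centred at $z_k$ (where $a=\cO(|z-z_k|^{-(1-2\alpha_k)})$, so $\beta=1-2\alpha_k<2/p$ by (\ref{eq:p-cond})) gives $\opT(a)=\cO(|z-z_k|^{-2/p})=o(1/|z-z_k|)$. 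Thus $G=o(1/|z-z_k|)$ is a holomorphic function whose singularity at $z_k$ is weaker than a simple pole, hence removable; so $G$ extends to an entire function on $\bC$.

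It remains to control $G$ at $z_{n-1}=\infty$. Here I would run the same analysis after the substitution $z\mapsto 1/z$, which is exactly the symmetry underlying Lemma~\ref{lm:int2} and Proposition~\ref{prop:two-cl}, using $a(z)=\cO(|z|^{1-2\alpha_{n-1}})$ together with the H\"older regularity of the vector field $Y\frac{\partial}{\partial z}$ at $\infty$ from Remark~\ref{rk:dy} and the same Beurling-transform estimate of Lemma~\ref{lm:asym}; this yields $G(z)=\cO(|z|^{\delta})$ for some $\delta<1$. An entire function of strictly sub-linear growth is constant, so $G\equiv c_0$ and Lemma~\ref{lm:key} follows. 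I expect this final step to be the main obstacle: because $a$ is not compactly supported, the naive term $-YP$ grows like $|z|^{2/p}$ as $z\to\infty$, and one must track its cancellation against $\partial_z Y=\opT(a)+\text{const}$ — precisely the role played by the change of variables combined with Lemma~\ref{lm:asym}.
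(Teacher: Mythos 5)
Your proposal is correct and follows the same overall skeleton as the paper's proof: expand $A_s^*\eta_s$, use $\partial_{\bar z}Y=a$ from Lemma~\ref{lm:int2} to identify the $\diff\zb$-component, use (\ref{eq:two-v}) with $R(\cdot,z_k)\in Q(\Xst)$ to get $\partial_s w_s(z_k)|_0=Y(z_k)$, reduce to showing that $D(z)=\partial_z Y-\sum_k\alpha_k\frac{Y(z)-Y(z_k)}{z-z_k}$ is constant, and finish with the sublinear growth at infinity (the paper's Lemma~\ref{lm:py-asymp}) plus Liouville.

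The one place where you genuinely diverge is the step showing $D$ is holomorphic across the finite punctures. The paper establishes that $D$ is $\rL^{p-\varepsilon}_{\loc}$ on all of $\bC$, computes its distributional $\bar\partial$-derivative globally (using that $\partial_z a=\sum_k\frac{\alpha_k}{z-z_k}a$ and $\pdv*{}{\zb}\bigl(\frac{Y(z)-Y(z_k)}{z-z_k}\bigr)=\frac{a}{z-z_k}$ hold as $\rL^1_{\loc}$ identities over $\bC$, not just on $X$), and invokes Weyl's lemma once to conclude $D$ is entire; no pointwise control of $\partial_z Y$ near the $z_k$ is needed. You instead prove holomorphy only on $X$ and remove the singularities classically, which forces you to estimate $\partial_z Y=\opT(a_k)+\cO(1)$ near each $z_k$ via Lemma~\ref{lm:asym} (using $\beta=1-2\alpha_k<2/p$ from (\ref{eq:p-cond})) and the H\"older continuity of $Y$ from Proposition~\ref{prop:two-cl} to get $D=o(1/|z-z_k|)$. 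Both routes are sound: yours is more elementary (sharp Riemann removability in place of distribution theory) but uses the Hilbert-transform asymptotics of Lemma~\ref{lm:asym} at every finite puncture, whereas the paper only needs that lemma at $z_{n-1}=\infty$. Your treatment of the point at infinity is the same as the paper's in substance, though you leave it as a sketch; the cancellation you worry about there is exactly what Lemma~\ref{lm:py-asymp} carries out.
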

As in Proposition~\ref{prop:two-cl}, we will denote the quantity $\frac{1}{\tpi} \int_{\bC} a(\zeta) R(\zeta,z) \diff \zeta \wedge \diff \bar{\zeta}$ appearing in (\ref{eq:As}) by $Y$ in the following proof.
\begin{proof}
  As before, we choose $z\in X$ and a small $\varepsilon_0$ to define $A_s$.
  Then, we use $A_s$ to calculate $\pdv{(A^*_s\eta_s)}{s}|_{s = 0}$ in a neighborhood of $z$.
  Besides that, in the following calculation, we will treat $A_s$ and $w_s$ as functions on $X\subseteq \bC$ directly rather than as diffeomorphisms.
  As a result, for derivatives of $A_s$ and $w_s$, unlike the notations used before, we will omit the basis vectors, $\partial/ \partial z$ or $\partial/ \partial \zb$.

  Firstly, we expand $A^*_s\eta_s(z)$ explicitly,
  \begin{equation*}
      A^*_s\eta_s(z) = \prod^{n-2}_{k=0} (A_s(z) - w_s(z_k))^{-\alpha_k}\cdot e \diff A_s(z),
  \end{equation*}
  To calculate the derivatives of the r.h.s.\ of the above equality, we note that due to (\ref{eq:As}),
  \begin{equation*}
    \pdv*{}{s}{\Big(\pdv{A_s}{z}\Big)}\Big|_{s = 0} = \pdv*{}{z}{\Big(\pdv{A_s}{s}\Big|_{s = 0}\Big)} = \pdv{Y}{z},
\end{equation*}
  Similarly, we also have
  \begin{equation*}
    \pdv*{}{s}{\Big(\pdv{A_s}{\zb}\Big)}\Big|_{s = 0} = \pdv{Y}{\zb} = a,
  \end{equation*}
  where we use Lemma~\ref{lm:int2} in the last equality.

  As noted before, by Lemma~\ref{lm:quad}, $R(z,z_k)\in Q(\Xst)$, for $k=0,\cdots,n-2$.
  Hence, due to (\ref{eq:two-v}), we have
  \begin{equation*}
    \pdv{w_s}{s}\Big|_{s =0}(z_k) = \frac{1}{\tpi} \int_{\bC} \mu(\zeta) R(\zeta,z_k) \diff \zeta \wedge \diff \bar{\zeta} = \frac{1}{\tpi} \int_{\bC} a(\zeta) R(\zeta,z_k) \diff \zeta \wedge \diff \bar{\zeta} = Y(z_k).
  \end{equation*}

  Using the above four equalities, near $z$, we have
  \begin{equation}
    \label{eq:exp-phe}
    \begin{aligned}
      \pdv{(A^*_s\eta_s)}{s}\Big|_{s = 0} &=
      \begin{multlined}[t]
        \pdv*{}{s}{\Big(\prod^{n-2}_{k=0} (A_s(z) - w_s(z_k))^{-\alpha_k}\Big)}\Big|_{s = 0}\cdot e \diff z \\
        + \Big(\prod^{n-2}_{k=0} (z - z_k)^{-\alpha_k}\Big) \cdot e \pdv*{}{s}{\Big(\pdv{A_s}{z} \diff z + \pdv{A_s}{\zb} \diff \zb\Big)}\Big|_{s = 0}
      \end{multlined} \\
      &=
      \begin{multlined}[t]
        \Big(- \sum^{n-2}_{k = 0} \frac{Y(z) - Y(z_k)}{z - z_k} \alpha_k + \pdv{Y}{z}(z)\Big) \prod^{n-2}_{k=0} (z - z_k)^{-\alpha_k} \cdot e \diff z \\
        + a(z) \prod^{n-2}_{k=0} (z - z_k)^{-\alpha_k} \cdot e \diff \zb
      \end{multlined}\\
      & = \Big(- \sum^{n-2}_{k = 0} \frac{Y(z) - Y(z_k)}{z - z_k} \alpha_k + \pdv{Y}{z}(z)\Big) \eta_0 + (a \frac{\partial}{\partial z} \otimes \diff \zb) \mathbin\lrcorner \eta_0.
    \end{aligned}
  \end{equation}

  Let
  \begin{equation*}
    D(z) \coloneqq - \sum^{n-2}_{k = 0} \frac{Y(z) - Y(z_k)}{z - z_k} \alpha_k + \pdv{Y}{z}(z).
  \end{equation*}
  Due to (\ref{eq:exp-phe}), to finish the proof of the lemma, we only need to check that $D(z)$ is a constant function.
  Before verifying this point, we first note that $D(z)$, by definition, is a smooth function on $X$, can also be thought as an $\rL^{p-\varepsilon}_{\loc}$ function on $\bC$, where $\varepsilon$ is a small enough number.
  To see this, we notice that Remark~\ref{rk:dy} implies $\pdv{Y}{z}$ is an $\rL^p_{\loc}$ function on $\bC$.
  Moreover, by Lemma~\ref{lm:int2}, $Y(z)$ is a locally $(1-2/p)$-H\"older function on $\bC$, which means that $(Y(z) - Y(z_k))/(z-z_k)$ is also $\rL^{p-\varepsilon}_{\loc}$.

We are going to calculate the distributional $\bar{\partial}$ derivative of $D(z)$.
  As before, since $a \in V(\Xst, g)$, we write $a$ as $\bar{\psi} e^{-\gamma} = \bar{\psi} \prod^{n-2}_{k = 0} |z - z_k|^{2\alpha_k}$, $\psi \in Q(\Xst)$.
  Using this expression, by the direct calculation, we know that $a$ has the $\rL^1_{\loc}$ distributional $\partial$ derivative and
  \begin{equation}
    \label{eq:pdz}
    \pdv{a}{z} = \sum^{n-2}_{k = 0} \big(\frac{\alpha_k}{z - z_k} a\big).
  \end{equation}
  Therefore, since the distributional derivatives commutes, the distributional $\bar{\partial}$ derivative of $\pdv{Y}{z}$ is
  \begin{equation}
    \label{eq:ydzz}
    \pdv{Y}{\zb,z} = \pdv{Y}{z,\zb} = \pdv{a}{z}.
  \end{equation}
  which also implies that $\pdv{Y}{\zb,z}$ is $\rL^1_{\loc}$.

  Next, we deal with the summation terms in $D(z)$.
  Due to the regularity result about $Y(z)$ given in Proposition~\ref{prop:two-cl}, we can check that
  \begin{equation}
    \label{eq:ydz}
    \pdv*{}{\zb}{\Big(\frac{Y(z) - Y(z_k)}{z - z_k}\Big)} = \frac{1}{z - z_k} \pdv{Y}{\zb} = \frac{a}{z - z_k}
  \end{equation}
  as distributions over $\bC$.
  Note that the r.h.s.\ of the above equality is also an $\rL^1_{\loc}$ function.

  Now, by (\ref{eq:pdz}), (\ref{eq:ydzz}) and (\ref{eq:ydz}), the distributional $\bar{\partial}$ derivative of $D(z)$ is
  \begin{equation*}
    \pdv{D}{\zb} = \sum^{n-2}_{k = 0} \big(\frac{-\alpha_k}{z - z_k} a\big) + \pdv{Y}{z,\zb} = \sum^{n-2}_{k = 0} \big(\frac{-\alpha_k}{z - z_k} a\big) + \pdv{a}{z} = 0.
  \end{equation*}
Therefore, Weyl's lemma implies that $D(z)$ is a smooth, consequently holomorphic, function on $\bC$.

  Finally, by Lemma~\ref{lm:int2}, as well as Lemma~\ref{lm:py-asymp} which we will prove in the next subsection, $D(z)$ is asymptotic to $\cO(|z|^{2/p})$ as $z \rightarrow \infty$.
  By Liouville's theorem, we conclude that $D(z)$ must be a constant.
\end{proof}

\subsection{Proof of Theorem~\ref{thm:main}.}
\label{sub:proof-thm-main}
Now, we use Lemma~\ref{lm:key} to further simplify (\ref{eq:wvv2}) and finish the proof of Theorem~\ref{thm:main}.

The main point is to replace the terms in (\ref{eq:wvv2}) involving the derivatives of $w_s$ by that of $A_s$.
As an example, we show the following equality and the other terms can be handled using a similar argument,
\begin{equation}
  \label{eq:w-eq-a}
  \Big(\int_X \pdv{\tau_s}{s} \wedge \overline{\pdv{\tau_s}{s}}\Big)\Big|_{s=0} = \Big(\int_X \pdv{(A^*_s\eta_s)}{s} \wedge \overline{\pdv{(A^*_s\eta_s)}{s}}\Big)\Big|_{s = 0}
\end{equation}
To begin with, using the equality (\ref{eq:wet}), we have
\begin{equation}
  \label{eq:w-eq-t}
  \Big(\int_X \pdv{\tau_s}{s} \wedge \overline{\pdv{\tau_s}{s}}\Big)\Big|_{s=0} = \Big\langle \opj^{-1}\Big(\Big[\pdv{(w^*_s\eta_s)}{s}\Big|_{s=0}\Big]\Big) \cup \opj^{-1}\Big(\Big[\overline{\pdv{(w^*_s\eta_s)}{s}}\Big|_{s=0}\Big]\Big), [X] \Big\rangle.
\end{equation}
Due to (\ref{eq:cl-eq}), to prove (\ref{eq:w-eq-a}), we only need to expand the r.h.s.\ of (\ref{eq:w-eq-a}) in the way like (\ref{eq:w-eq-t}).

To simplify notations, we will denote $\pdv{(A^*_s\eta_s)}{s}|_{s = 0}$ by $\Pi$ in the following.
We rewrite (\ref{eq:exp-phe}) in the following way,
\begin{equation*}
  \Pi = \diff \Big( Y(z) \prod^{n-2}_{k=0} (z - z_k)^{-\alpha_k} \cdot e\Big) + \sum^{n-2}_{k=0}\frac{Y(z_k)\alpha_k}{z - z_k} \eta_0.
\end{equation*}
Fix $k$ and choose $D_{k,\varepsilon}$ to be a small disk centered at $z_k$.
Let $\varphi_k$ to be a cut-off function on $\bC$ equal to $1$ on $D_{k,\varepsilon}$ and vanishing outside $D_{k,2\varepsilon}$.
By the above equality, we can find a holomorphic section $u_k$ of $F$ on $D_{k,\varepsilon} - \aset{z_k}$ such that
\begin{equation*}
  \Pi = \diff\Big( \big(Y(z) - Y(z_k)\big)\prod^{n-2}_{l=0} (z - z_l)^{-\alpha_l} \cdot e \Big) + \diff u_k,\; v_{z_k}(u_k) = -\alpha_k + 1.
\end{equation*}
holds on $D_{k,\varepsilon} - \aset{z_k}$.
Recall that $v_{z_k}(\bullet)$ is the valuation at $z_k$ defined in (\ref{eq:def-val}).
Let
\begin{equation*}
  y_k \coloneqq \big(Y(z) - Y(z_k)\big)\prod^{n-2}_{l=0} (z - z_l)^{-\alpha_l} \cdot e.
\end{equation*}
Then, $\Pi - \diff(\varphi_k(y_k  + u_k))$ vanishes near $z_k$.
Therefore, using Lemma~\ref{lm:key}, we have
\begin{multline}
  \label{eq:int-xd}
  \int_X \Big(\Pi - \diff(\varphi_k(y_k  + u_k))\Big) \wedge \overline{\Pi}
  = \int_{X - D_{k,\varepsilon}} \Pi \wedge \overline{\Pi}
  + \int_{\partial D_{k,\varepsilon}} (y_k + u_k) \wedge (\overline{c_0 \eta_0}) \\
  + \int_{\partial D_{k,\varepsilon}} (y_k + u_k) \wedge \Big(\overline{(a \frac{\partial}{\partial z} \otimes \diff \zb) \mathbin\lrcorner \eta_0}\Big)
\end{multline}

By our definition of $y_k$ and $u_k$, and using Proposition~\ref{prop:two-cl}, we have the estimate
\begin{equation}
  \label{eq:est-yu}
  y_k + u_k =
  \begin{cases}
    \cO(|z - z_k|^{\alpha_k - \delta}),& 0< \alpha_k < 1/2;\\
    \cO(|z - z_k|^{1 - \alpha_k - \delta}),& 1/2\le \alpha_k < 1.
  \end{cases}
\end{equation}
near $z_k$, where $\delta > 0$ is any sufficiently small number.
On the other hand, by direct calculation, near $z_k$, we also have
\begin{equation}
  \label{eq:o-eta}
  \begin{aligned}
    \eta_0 &= \cO(|z- z_k|^{-\alpha_k}),\\ a \frac{\partial}{\partial z} \otimes \diff \zb \mathbin\lrcorner \eta_0 &= \cO(|z- z_k|^{-1 + \alpha_k}).\end{aligned}
\end{equation}
Due to (\ref{eq:est-yu}) and (\ref{eq:o-eta}), we can see that by choosing a very small $\delta$, two boundary integrals in (\ref{eq:int-xd}) converge to $0$ as $\varepsilon \rightarrow 0$.

Now, with $\varepsilon$ in (\ref{eq:int-xd}) going to $0$, we have shown
\begin{equation}
  \label{eq:int-xd2}
  \int_X \big(\Pi - \diff(\varphi_k(y_k  + u_k))\big) \wedge \overline{\Pi}
  = \int_{X} \Pi \wedge \overline{\Pi}
\end{equation}
Note the argument we use to show the above equality is purely local.
We can construct $\varphi_k,y_k,u_k$ near other $k$ using the same method.\footnote{Certainly, near $z_{n-1} = \infty$, we should change the coordinate: $z \mapsto 1/z$.}
Therefore, (\ref{eq:int-xd2}) is also true if we replace $\varphi_k(y_k + u_k)$ by $\sum^{n-1}_{k=0} \varphi_k(y_k + u_k)$.
But, (\ref{eq:int-xd2}) in this form implies the following equality,
\begin{equation*}
  \int_X \Pi \wedge \overline{\Pi} = \langle \opj^{-1}([\Pi]) \cup \opj^{-1}([\overline{\Pi}]), [X] \rangle.
\end{equation*}
Now, the equality (\ref{eq:w-eq-a}) is a consequence of (\ref{eq:cl-eq}), (\ref{eq:w-eq-t}) and the above equality.

Finally, we can replace the derivatives of $w_s$ by that of $A_s$ in (\ref{eq:wvv2}) and use (\ref{eq:wt}) to obtain
\begin{equation*}
  -i\omega_{\tv}(v,\bar{v}) = \frac{\int_X \eta_0 \wedge \overline{\eta}_0 \cdot \int_X \Pi \wedge \overline{\Pi}  - \int_X \eta_0 \wedge \overline{\Pi} \cdot \int_X \Pi \wedge \overline{\eta}_0}{(\int_X \eta_0 \wedge \overline{\eta}_0)^2}.
\end{equation*}
Using Lemma~\ref{lm:key} to expand the r.h.s.\ of the above formula, we have
\begin{equation*}
  \omega_{\tv}(v,\bar{v}) = i\frac{\int_X ({(a \frac{\partial}{\partial z} \otimes \diff \zb) \mathbin\lrcorner \eta_0}) \wedge (\overline{(a \frac{\partial}{\partial z} \otimes \diff \zb) \mathbin\lrcorner \eta_0})}{\int_X \eta_0 \wedge \overline{\eta}_0} = \frac{1}{2\pi} \frac{\int_X |a|^2 e^\gamma \diff z \wedge \diff \zb}{\Vol{(X,g)}} = \omega_{\pw}(v,\bar{v}).
\end{equation*}
Hence, we have shown (\ref{eq:key-eq}) and the proof of Theorem~\ref{thm:main} completes.

At last, we prove an asymptotics result used in the proof of Lemma~\ref{lm:key}.
\begin{lemma}
  \label{lm:py-asymp}
$\pdv{Y}{z}(z) = \cO(|z|^{2/p})$ as $z \rightarrow \infty$.
\end{lemma}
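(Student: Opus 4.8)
The plan is to reduce the behaviour of $\pdv{Y}{z}$ near $\infty$ to the behaviour near $0$ of a Hilbert transform and then invoke Lemma~\ref{lm:asym}. Following the inversion pattern of Lemma~\ref{lm:int2} and Remark~\ref{rk:dy}(2), I would first split $a = a\chi_{\Delta} + a(1-\chi_{\Delta})$ and write $Y = Y_1 + Y_2$ accordingly, where $Y_1$ (resp.\ $Y_2$) is the integral operator applied to the first (resp.\ second) piece. Since $a\chi_{\Delta}$ is compactly supported and lies in $\rL^p(\bC)$, for $\zeta\in\Delta$ and $z\to\infty$ the kernel $\frac{1}{(\zeta-z)^2}+\frac1\zeta-\frac1{\zeta-1}$ (which is $\partial_z R(\zeta,z)$) stays bounded; hence $\pdv{Y_1}{z} = \cO(1)$ near $\infty$ and contributes nothing to the claimed estimate. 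Near $\infty$ the function $Y$ is smooth by Proposition~\ref{prop:two-cl}(2), so all derivatives here are classical.

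The main term is $Y_2$. I would pass to the coordinate $w = 1/z$ and set $\tilde Y(w) \coloneqq -w^2 Y(1/w)$, the coefficient of the vector field $Y\,\partial/\partial z$ in the $w$-chart. A direct chain-rule computation gives $\pdv{Y}{z}(z) = \pdv{\tilde Y}{w}(w) + 2wY(1/w)$; since $Y(z) = \cO(|z|^{1+2/p})$ by Lemma~\ref{lm:int2}, the term $2wY(1/w) = \cO(|w|^{-2/p}) = \cO(|z|^{2/p})$ is already acceptable, so everything reduces to estimating $\pdv{\tilde Y}{w}$ near $w=0$. Using the expression (\ref{eq:h22}) for $Y_2 = H_2$, the key algebraic observation is that $-w^2 Y_2(1/w) = \frac{1}{\tpi}\int_{\Delta} g(\xi)\,R(\xi,w)\diff \xi \wedge \diff \bar\xi$ with $g(\xi)\coloneqq a(1/\xi)\,\xi^2/\bar\xi^2$; this is exactly the function $H$ of Lemma~\ref{lm:int} for $h = g$ (extended by zero), evaluated at $w$. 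By the identity $\pdv{\opP(g)}{w} = \opT(g)$ from \S\ref{sub:asym}, the holomorphic derivative of this piece equals the Hilbert transform $\opT(g)(w)$ up to an additive constant, while the leftover $-w^2 Y_1(1/w)$ is $\cO(|w|)$ with derivative $\cO(1)$.

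It remains to check that $g$ satisfies the hypotheses of Lemma~\ref{lm:asym} and to read off the exponent. From Proposition~\ref{prop:two-cl} one has $a(z) = \cO(|z|^{1-2\alpha_{n-1}})$, and from (\ref{eq:pdz}) together with $\psi = \cO(|z|^{-3})$ for $\psi\in Q(\Xst)$ (a consequence of the residue constraints $\sum\rho_k = 0$, $\sum\rho_k z_k = 0$ in Lemma~\ref{lm:quad}) one gets $\diff a(z) = \cO(|z|^{-2\alpha_{n-1}})$ as $z\to\infty$. Transporting these through the inversion and the factor $\xi^2/\bar\xi^2$ yields $g(\xi) = \cO(|\xi|^{-\beta})$ and $\diff g(\xi) = \cO(|\xi|^{-\beta-1})$ with $\beta = 1-2\alpha_{n-1}$, and the integrability $g\in\rL^p(\Delta)$, $\diff g\in\rL^1(\Delta)$ holds precisely because $p(1-2\alpha_{n-1})<2$ by (\ref{eq:p-cond}) and $\alpha_{n-1}>0$. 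Since $\beta < 2/p$ by the same condition, Lemma~\ref{lm:asym} gives $\opT(g)(w) = \cO(|w|^{-\gamma})$ with $\gamma = \max\aset{2/p,\beta} = 2/p$. Assembling the three contributions gives $\pdv{\tilde Y}{w} = \cO(|w|^{-2/p})$, whence $\pdv{Y}{z}(z) = \cO(|z|^{2/p})$.

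I expect the main obstacle to be the bookkeeping of the change of variables, specifically establishing the clean identity $-w^2 Y_2(1/w) = \frac{1}{\tpi}\int_{\Delta} g(\xi)\,R(\xi,w)\diff \xi \wedge \diff \bar\xi$ so that the singular double-pole part of $\pdv{\tilde Y}{w}$ is exactly an instance of $\opT(g)$. A naive differentiation under the integral sign in the $z$-variable produces apparent terms of order $|z|$, and the virtue of the substitution $\tilde Y = -w^2 Y(1/w)$ is that these spurious linear terms are organized away automatically; verifying the decay of $\diff a$ near $\infty$, where the two residue constraints of Lemma~\ref{lm:quad} are essential, is the secondary technical point.
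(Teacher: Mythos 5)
Your proposal is correct and follows essentially the same route as the paper: the same splitting $Y = Y_1 + Y_2$, the same inversion $\xi = 1/\zeta$ reducing $Y_2$ to the function $H$ of Lemma~\ref{lm:int} applied to $g(\xi) = a_2(1/\xi)\xi^2/\bar\xi^2$, and the same application of Lemma~\ref{lm:asym} with $\beta = 1-2\alpha_{n-1}$ so that $\gamma = 2/p$ by (\ref{eq:p-cond}). Your chain-rule bookkeeping via $\tilde Y(w) = -w^2Y(1/w)$ is just an algebraic regrouping of the three terms the paper obtains by differentiating (\ref{eq:h22}) directly in (\ref{eq:pd-y2}), and your explicit verification of the decay of $\diff a$ via $\psi = \cO(|z|^{-3})$ fills in a step the paper leaves as ``one can check.''
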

\begin{proof}
  As in the proof of Lemma~\ref{lm:int2}, we will choose $a_1 = a \chi_{\Delta}$, $a_2 = a (1-\chi_{\Delta})$, and write $Y = Y_1 + Y_2$, where
  \begin{equation*}
    Y_1(z) \coloneqq \frac{1}{\tpi} \int_{\Delta} a_1(\zeta) R(\zeta,z) \diff \zeta \wedge \diff \bar{\zeta},\; Y_2(z) \coloneqq \frac{1}{\tpi} \int_{\bC- \Delta} a_2(\zeta) R(\zeta,z) \diff \zeta \wedge \diff \bar{\zeta}.
  \end{equation*}
  As before, by changing variables, we have
  \begin{equation*}
    Y_2(z) = \frac{-z^2}{\tpi} \int_{\Delta} a_2(1/\xi) \frac{\xi^2}{\bar{\xi}^2} R(\xi,1/z)  \diff \xi \wedge \diff \bar{\xi}.
  \end{equation*}
  We deal with the asymptotic behavior of $\pdv{Y_1}{z}$ and $\pdv{Y_2}{z}$ respectively.

  For $|z|$ sufficiently large, by~\cite[Lemma~4.21]{Imayoshi_1992in} or by the direct calculation, the following equality holds,
  \begin{equation*}
    \pdv*{}{z}{\Big(\frac{1}{\tpi} \int_{\Delta} a_1(\zeta) (\frac{1}{\zeta -z} - \frac{1}{\zeta}) \diff \zeta \wedge \diff \bar{\zeta}\Big)} = \frac{1}{\tpi} \int_{\Delta} a_1(\zeta) \frac{1}{(\zeta - z)^2} \diff \zeta \wedge \diff \bar{\zeta}.
  \end{equation*}
  Using the definition of $R$, the above equality implies that
  \begin{equation*}
    |\pdv{Y_1}{z}(z)| \le C_1 + \frac{1}{2\pi} \int_{\Delta} |a_1(\zeta)| \frac{1}{(|z| - 1)^2} |\diff \zeta|^2 \le C_1 + \frac{C_2}{|z|^2}.
  \end{equation*}

  As for $\pdv{Y_2}{z}$, using~\cite[Lemma~4.21]{Imayoshi_1992in} again, we have the following expression for it,
  \begin{multline}
    \label{eq:pd-y2}
    \pdv{Y_2}{z} = \frac{-z}{\pi i} \int_{\Delta} a_2(1/\xi) \frac{\xi^2}{\bar{\xi}^2} (\frac{1}{\xi -z^{-1}} - \frac{1}{\xi}) \diff \xi \wedge \diff \bar{\xi} \\
    + \frac{1}{\tpi} \int_{\Delta} a_2(1/\xi) \frac{\xi^2}{\bar{\xi}^2} \frac{1}{(\xi - z^{-1})^2}  \diff \xi \wedge \diff \bar{\xi} \\
    + \frac{1}{\tpi} \int_{\Delta} a_2(1/\xi) \frac{\xi^2}{\bar{\xi}^2} (\frac{1}{\xi - 1} - \frac{1}{\xi}) \diff \xi \wedge \diff \bar{\xi},
  \end{multline}
  among which the \nth{2} term on the r.h.s.\ is the Hilbert transformation of $a_2(1/\xi) {\xi^2}/{\bar{\xi}^2}$.

  Since $a_2(1/\xi) {\xi^2}/{\bar{\xi}^2} \in \rL^p(\Delta)$, $p > 2$, Lemma~\ref{lm:int} implies the \nth{1} term on the r.h.s.\ of (\ref{eq:pd-y2}) can be estimated as follows,
  \begin{equation}
    \label{eq:pd-y2-1}
    \Big|\frac{-z}{\pi i} \int_{\Delta} a_2(1/\xi) \frac{\xi^2}{\bar{\xi}^2} (\frac{1}{\xi -z^{-1}} - \frac{1}{\xi}) \diff \xi \wedge \diff \bar{\xi} \Big| \le |z| \cdot C_3(|z^{-1} - 0|^{1 - 2/p}) = C_3 |z|^{2/p}.
  \end{equation}

  To deal with the \nth{2} term on the r.h.s.\ of (\ref{eq:pd-y2}), as before, we write $a$ as $\bar{\psi} e^{-\gamma} = \bar{\psi} \prod^{n-2}_{k = 0} |z - z_k|^{2\alpha_k}$ for some $\psi \in Q(\Xst)$.
  Then, using Lemma~\ref{lm:quad}, one can check that $a_2(1/z)z^2/\zb^2$ satisfies the asymptotics assumptions in Lemma~\ref{lm:asym} with $\beta = 1 - 2\alpha_{n-1}$.
  Therefore, by (\ref{eq:p-cond}), the \nth{2} term on the r.h.s.\ of (\ref{eq:pd-y2}) has the estimate $\cO(|z|^{2/p})$ as $z \rightarrow \infty$.
  Combining this with (\ref{eq:p-cond}) and (\ref{eq:pd-y2-1}), we have proved Lemma~\ref{lm:py-asymp}.
\end{proof}

\bibliographystyle{amsplain}
\begin{bibdiv}
\begin{biblist}

\bib{Arbarello_2011ge}{book}{
      author={Arbarello, Enrico},
      author={Cornalba, Maurizio},
      author={Griffiths, Phillip~A.},
       title={Geometry of algebraic curves. {V}olume {II}},
      series={Grundlehren der mathematischen Wissenschaften [Fundamental
  Principles of Mathematical Sciences]},
   publisher={Springer, Heidelberg},
        date={2011},
      volume={268},
        ISBN={978-3-540-42688-2},
         url={https://mathscinet.ams.org/mathscinet-getitem?mr=2807457},
        note={With a contribution by Joseph Daniel Harris},
      review={\MR{2807457}},
}

\bib{Bott_1982aa}{book}{
      author={Bott, Raoul},
      author={Tu, Loring~W.},
       title={Differential forms in algebraic topology},
      series={Graduate Texts in Mathematics},
   publisher={Springer-Verlag, New York-Berlin},
        date={1982},
      volume={82},
        ISBN={0-387-90613-4},
         url={https://mathscinet.ams.org/mathscinet-getitem?mr=658304},
      review={\MR{658304}},
}

\bib{Deligne_1986mo}{article}{
      author={Deligne, P.},
      author={Mostow, G.~D.},
       title={Monodromy of hypergeometric functions and nonlattice integral
  monodromy},
        date={1986},
        ISSN={0073-8301},
     journal={Inst. Hautes \'{E}tudes Sci. Publ. Math.},
      number={63},
       pages={5\ndash 89},
         url={https://mathscinet.ams.org/mathscinet-getitem?mr=849651},
      review={\MR{849651}},
}

\bib{Earle_1967re}{article}{
      author={Earle, Clifford~J.},
       title={Reduced {T}eichm\"{u}ller spaces},
        date={1967},
        ISSN={0002-9947},
     journal={Trans. Amer. Math. Soc.},
      volume={126},
       pages={54\ndash 63},
         url={https://mathscinet.ams.org/mathscinet-getitem?mr=204642},
      review={\MR{204642}},
}

\bib{Goldman_1984aa}{article}{
      author={Goldman, William~M.},
       title={The symplectic nature of fundamental groups of surfaces},
        date={1984},
        ISSN={0001-8708},
     journal={Adv. in Math.},
      volume={54},
      number={2},
       pages={200\ndash 225},
         url={http://www.ams.org/mathscinet-getitem?mr=762512},
      review={\MR{762512}},
}

\bib{Goldman_1999co}{book}{
      author={Goldman, William~M.},
       title={Complex hyperbolic geometry},
      series={Oxford Mathematical Monographs},
   publisher={The Clarendon Press, Oxford University Press, New York},
        date={1999},
        ISBN={0-19-853793-X},
         url={https://mathscinet.ams.org/mathscinet-getitem?mr=1695450},
        note={Oxford Science Publications},
      review={\MR{1695450}},
}

\bib{Hormander_1990ab}{book}{
      author={H\"{o}rmander, Lars},
       title={The analysis of linear partial differential operators. {I}},
     edition={Second},
      series={Grundlehren der Mathematischen Wissenschaften [Fundamental
  Principles of Mathematical Sciences]},
   publisher={Springer-Verlag, Berlin},
        date={1990},
      volume={256},
        ISBN={3-540-52345-6},
         url={https://mathscinet.ams.org/mathscinet-getitem?mr=1065993},
        note={Distribution theory and Fourier analysis},
      review={\MR{1065993}},
}

\bib{Hubbard_2006te}{book}{
      author={Hubbard, John~Hamal},
       title={Teichm\"{u}ller theory and applications to geometry, topology,
  and dynamics. {V}ol. 1},
   publisher={Matrix Editions, Ithaca, NY},
        date={2006},
        ISBN={978-0-9715766-2-9; 0-9715766-2-9},
         url={https://mathscinet.ams.org/mathscinet-getitem?mr=2245223},
        note={Teichm\"{u}ller theory, With contributions by Adrien Douady,
  William Dunbar, Roland Roeder, Sylvain Bonnot, David Brown, Allen Hatcher,
  Chris Hruska and Sudeb Mitra, With forewords by William Thurston and Clifford
  Earle},
      review={\MR{2245223}},
}

\bib{Imayoshi_1992in}{book}{
      author={Imayoshi, Y.},
      author={Taniguchi, M.},
       title={An introduction to {T}eichm\"{u}ller spaces},
   publisher={Springer-Verlag, Tokyo},
        date={1992},
        ISBN={4-431-70088-9},
         url={https://mathscinet.ams.org/mathscinet-getitem?mr=1215481},
        note={Translated and revised from the Japanese by the authors},
      review={\MR{1215481}},
}

\bib{Mondello_2010po}{incollection}{
      author={Mondello, Gabriele},
       title={Poisson structures on the {T}eichm\"{u}ller space of hyperbolic
  surfaces with conical points},
        date={2010},
   booktitle={In the tradition of {A}hlfors-{B}ers. {V}},
      series={Contemp. Math.},
      volume={510},
   publisher={Amer. Math. Soc., Providence, RI},
       pages={307\ndash 329},
         url={https://mathscinet.ams.org/mathscinet-getitem?mr=2581842},
      review={\MR{2581842}},
}

\bib{Nag_1988co}{book}{
      author={Nag, Subhashis},
       title={The complex analytic theory of {T}eichm\"{u}ller spaces},
      series={Canadian Mathematical Society Series of Monographs and Advanced
  Texts},
   publisher={John Wiley \& Sons, Inc., New York},
        date={1988},
        ISBN={0-471-62773-9},
         url={https://mathscinet.ams.org/mathscinet-getitem?mr=927291},
        note={A Wiley-Interscience Publication},
      review={\MR{927291}},
}

\bib{Patterson_1973so}{article}{
      author={Patterson, David~B.},
       title={Some remarks on the moduli of punctured spheres},
        date={1973},
        ISSN={0002-9327},
     journal={Amer. J. Math.},
      volume={95},
       pages={713\ndash 719},
         url={https://mathscinet.ams.org/mathscinet-getitem?mr=357864},
      review={\MR{357864}},
}

\bib{Schwartz_2015no}{article}{
      author={Schwartz, Richard~Evan},
       title={Notes on shapes of polyhedra},
        date={2015},
      eprint={1506.07252v1},
         url={http://arxiv.org/abs/1506.07252v1},
}

\bib{Shimura_1959le}{article}{
      author={Shimura, Goro},
       title={Sur les int\'{e}grales attach\'{e}es aux formes automorphes},
        date={1959},
        ISSN={0025-5645},
     journal={J. Math. Soc. Japan},
      volume={11},
       pages={291\ndash 311},
         url={https://mathscinet.ams.org/mathscinet-getitem?mr=120372},
      review={\MR{120372}},
}

\bib{Schumacher_2011we}{article}{
      author={Schumacher, Georg},
      author={Trapani, Stefano},
       title={Weil-{P}etersson geometry for families of hyperbolic conical
  {R}iemann surfaces},
        date={2011},
        ISSN={0026-2285},
     journal={Michigan Math. J.},
      volume={60},
      number={1},
       pages={3\ndash 33},
         url={https://mathscinet.ams.org/mathscinet-getitem?mr=2785861},
      review={\MR{2785861}},
}

\bib{Thurston_1998sh}{incollection}{
      author={Thurston, William~P.},
       title={Shapes of polyhedra and triangulations of the sphere},
        date={1998},
   booktitle={The {E}pstein birthday schrift},
      series={Geom. Topol. Monogr.},
      volume={1},
   publisher={Geom. Topol. Publ., Coventry},
       pages={511\ndash 549},
         url={https://mathscinet.ams.org/mathscinet-getitem?mr=1668340},
      review={\MR{1668340}},
}

\bib{Tian_1987aa}{incollection}{
      author={Tian, Gang},
       title={Smoothness of the universal deformation space of compact
  {C}alabi-{Y}au manifolds and its {P}etersson-{W}eil metric},
        date={1987},
   booktitle={Mathematical aspects of string theory ({S}an {D}iego, {C}alif.,
  1986)},
      series={Adv. Ser. Math. Phys.},
      volume={1},
   publisher={World Sci. Publishing, Singapore},
       pages={629\ndash 646},
         url={http://mathscinet.ams.org/mathscinet-getitem?mr=915841},
      review={\MR{915841}},
}

\bib{Troyanov_2007th}{incollection}{
      author={Troyanov, Marc},
       title={On the moduli space of singular {E}uclidean surfaces},
        date={2007},
   booktitle={Handbook of {T}eichm\"{u}ller theory. {V}ol. {I}},
      series={IRMA Lect. Math. Theor. Phys.},
      volume={11},
   publisher={Eur. Math. Soc., Z\"{u}rich},
       pages={507\ndash 540},
         url={https://mathscinet.ams.org/mathscinet-getitem?mr=2349679},
      review={\MR{2349679}},
}

\bib{Troyanov_1986le}{article}{
      author={Troyanov, Marc},
       title={Les surfaces euclidiennes \`a singularit\'{e}s coniques},
        date={1986},
        ISSN={0013-8584},
     journal={Enseign. Math. (2)},
      volume={32},
      number={1-2},
       pages={79\ndash 94},
         url={https://mathscinet.ams.org/mathscinet-getitem?mr=850552},
      review={\MR{850552}},
}

\bib{Takhtajan_2003hy}{article}{
      author={Takhtajan, Leon},
      author={Zograf, Peter},
       title={Hyperbolic 2-spheres with conical singularities, accessory
  parameters and {K}\"{a}hler metrics on {$\scr M_{0,n}$}},
        date={2003},
        ISSN={0002-9947},
     journal={Trans. Amer. Math. Soc.},
      volume={355},
      number={5},
       pages={1857\ndash 1867},
         url={https://mathscinet.ams.org/mathscinet-getitem?mr=1953529},
      review={\MR{1953529}},
}

\bib{Veech_1993fl}{article}{
      author={Veech, William~A.},
       title={Flat surfaces},
        date={1993},
        ISSN={0002-9327},
     journal={Amer. J. Math.},
      volume={115},
      number={3},
       pages={589\ndash 689},
         url={https://mathscinet.ams.org/mathscinet-getitem?mr=1221838},
      review={\MR{1221838}},
}

\bib{Voisin_2002aa}{book}{
      author={Voisin, Claire},
       title={Hodge theory and complex algebraic geometry. {I}},
      series={Cambridge Studies in Advanced Mathematics},
   publisher={Cambridge University Press, Cambridge},
        date={2002},
      volume={76},
        ISBN={0-521-80260-1},
         url={https://mathscinet.ams.org/mathscinet-getitem?mr=1967689},
        note={Translated from the French original by Leila Schneps},
      review={\MR{1967689}},
}

\end{biblist}
\end{bibdiv}
 
\end{document}